\numberwithin{equation}{section}
\theoremstyle{plain} 
\newtheorem{theorem}{Theorem}[section]
\newtheorem{lemma}[theorem]{Lemma}
\newtheorem{corollary}[theorem]{Corollary}
\newtheorem{proposition}[theorem]{Proposition}
\newtheorem{definition}[theorem]{Definition}
\newcounter{kevin}
\numberwithin{kevin}{section}
\theoremstyle{remark}
\newtheorem{remark}[kevin]{Remark}
\renewcommand{\Re}{\mathrm{Re}\,}
\renewcommand{\Im}{\mathrm{Im}\,}
\newcommand{\im}{\mathrm{Im}\,}
\newcommand{\E}{{\mathbb E }}
\newcommand{\R}{{\mathbb R }}
\newcommand{\N}{{\mathbb N}}
\newcommand{\Z}{{\mathbb Z}}
\renewcommand{\P}{{\mathbb P}}
\newcommand{\C}{{\mathbb C}}
\newcommand{\ii}{\mathrm{i}}
\newcommand{\deq}{\mathrel{\mathop:}=}
\newcommand{\e}[1]{\mathrm{e}^{#1}}
\newcommand{\ntr}{\mathrm{tr}\,}
\newcommand{\dd}{\mathrm{d}}
\newcommand{\ie}{\emph{i.e., }}
\newcommand{\eg}{\emph{e.g., }}
\newcommand{\cf}{\emph{c.f., }}
\newcommand{\PP}{\Phi}
\newcommand{\dL}{\mathrm{d}_{\mathrm{L}}}
\newcommand{\wh}{\widehat}
\DeclareMathOperator*{\supp}{supp\,}
\DeclareMathOperator*{\IE}{ \mathbb{I}\mkern-2mu\mathbb{E}}
\begin{document}

 \begin{minipage}{0.85\textwidth}
 \vspace{2.5cm}
 \end{minipage}
\begin{center}
\large\bf
Local Stability of the Free Additive Convolution
\end{center}

\renewcommand{\thefootnote}{\fnsymbol{footnote}}	
\vspace{1cm}
\begin{center}
 \begin{minipage}{0.3\textwidth}
\begin{center}
Zhigang Bao\footnotemark[2]  \\
\footnotesize {IST Austria}\\
{\it zhigang.bao@ist.ac.at}
\end{center}
\end{minipage}
\begin{minipage}{0.3\textwidth}
\begin{center}
L\'aszl\'o Erd{\H o}s\footnotemark[1]  \\
\footnotesize {IST Austria}\\
{\it lerdos@ist.ac.at}
\end{center}
\end{minipage}
\begin{minipage}{0.3\textwidth}
 \begin{center}
Kevin Schnelli\footnotemark[2]\\
\footnotesize 
{IST Austria}\\
{\it kevin.schnelli@ist.ac.at}
\end{center}
\end{minipage}
\footnotetext[1]{Partially supported by ERC Advanced Grant RANMAT No.\ 338804.}
\footnotetext[2]{Supported by ERC Advanced Grant RANMAT No.\ 338804.}

\renewcommand{\thefootnote}{\fnsymbol{footnote}}	

\end{center}
\vspace{1cm}

\begin{center}
 \begin{minipage}{0.8\textwidth}\footnotesize{}
 We prove that the system of subordination equations, defining the free additive convolution of two probability measures,
is stable away from the edges of the support and blow-up singularities
by showing that the recent smoothness condition of Kargin is always satisfied.
As an application, we consider the local  spectral statistics 
 of the random matrix ensemble  $A+UBU^*$, where  $U$ is a Haar distributed random unitary 
 or orthogonal matrix, and $A$ and $B$ are deterministic matrices. In the bulk regime, we prove that the empirical spectral distribution of~$A+UBU^*$ concentrates around the free additive convolution of the spectral distributions of $A$ and $B$ on scales down to~$N^{-2/3}$.

\end{minipage}
\end{center}

 \vspace{2mm}
 
 {\small

 \footnotesize{\noindent\textit{Keywords}: Free convolution, subordination, local eigenvalue density}
 
 \footnotesize{\noindent\textit{AMS Subject Classification (2010)}: 46L54, 60B20}
 \vspace{2mm}

 }

\thispagestyle{headings}

\section{Introduction}

One of the basic concepts of free probability theory is the free additive convolution of two
probability laws in a non-commutative probability space; it describes the law of the sum of
two free random variables. In the case of a bounded self-adjoint random variable, its law can be identified 
with a probability measure of compact support on the real line. Hence the
free additive convolution of two probability measures is a well-defined concept and it is characteristically different from the classical convolution. 

In this paper, we prove a local stability result of the free additive convolution. A direct consequence
 is the continuity of the free additive convolution in a much stronger topology 
than established earlier by  Bercovici and Voiculescu~\cite{BeV93}.
A second application of our stability result is  to establish a local law on a very small scale for the eigenvalue
density of a random matrix ensemble $A+UBU^*$ where $U$ is a Haar distributed unitary or orthogonal matrix
and $A$, $B$ are deterministic $N$~by~$N$ hermitian matrices.

The free additive convolution was originally introduced by Voiculescu~\cite{Voi86} for the sum of free bounded 
noncommutative random variables in an algebraic setup (see  
 Maassen~\cite{Maa92} and by Bercovici and Voiculescu~\cite{BeV93}
for extensions to the unbounded case).
The Stieltjes transform of the free additive convolution is related 
to the Cauchy-Stieltjes transforms of the original measures by an elegant analytic change of variables.
 This {\it subordination phenomenon}
was first  observed by Voiculescu~\cite{Voi93} in a generic situation
and extended to full generality by Biane~\cite{Bia98}. In fact, the subordination equations,
see~\eqref{le definiting equations}-\eqref{le kkv} below, may directly be used to define
the free additive convolution. This analytic definition was given independently by Belinschi and Bercovici~\cite{BB} and by
Chistyakov and G\"{o}tze~\cite{CG}; for further details we refer to, \eg~\cite{VDN,HP,AGZ}.

Kargin~\cite{Kargin2013} pointed out 
that the analytic approach to the subordination equations, in contrast to the algebraic one,
allows one to effectively study how free additive convolution is affected by small perturbations;
this is especially useful to treat various error terms in the random matrix problem~\cite{Kargin}.
The basic tool is a local stability analysis of the subordination equations. In~\cite{Kargin2013}, Kargin assumed a lower bound on the imaginary part of the subordination functions
and a certain non-degeneracy condition on the Jacobian that holds for generic values of the spectral parameter.
While these so-called {\it smoothness conditions} hold 
in many examples, 
a general characterization was lacking.
Our first result, Theorem~\ref{thm stability}, shows that the smoothness
 conditions hold wherever the absolutely continuous part of
the free convolution measure is finite and  nonzero. In particular,  local stability holds unconditionally
(Corollary~\ref{corollary perturbation}) 
and, following  Kargin's argument~\cite{Kargin2013}, we immediately obtain the continuity of the free additive convolution in a stronger sense; see Theorem~\ref{le theorem continuity}.

The random matrix application of this stability result, however, goes well beyond Kargin's analysis~\cite{Kargin}
since our proof is valid on a much smaller scale. To explain the new elements, we recall how free probability connects to random matrices.

The following fundamental observation was made by Voiculescu~\cite{Voi91}
(later extended by Dykema~\cite{Dyk93b} and Speicher~\cite{Spe94}): if
$A=A^{(N)}$ and $B=B^{(N)}$ are two sequences of  Hermitian matrices that are asymptotically free with eigenvalue
distributions converging to probability measures $\mu_\alpha$ and $\mu_\beta$, then the eigenvalue density of $A+B$ is asymptotically given
by the free additive convolution $\mu_\alpha\boxplus\mu_\beta$.
 One of the most natural ways to
ensure asymptotic freeness is to consider conjugation by independent unitary matrices. Indeed,
if~$A$ and~$B$ are deterministic (may even be chosen diagonal)
with limit laws $\mu_\alpha$ and $\mu_\beta$, then $A$ and $UBU^*$ are asymptotically free if $U=U^{(N)}$ is a Haar distributed matrix; see~\cite{Voi91} and many subsequent works, \eg~\cite{Spe93,Xu97,Bia98bis,VP,Col03}. In particular, the limiting spectral density of the eigenvalues of $H=A+UBU^*$ is given by 
$\mu_\alpha\boxplus\mu_\beta$.

The conventional setup of free probability operates with moment calculations. An alternative approach~\cite{VP} proves the convergence of the resolvent at
any fixed spectral parameter~$z\in \C^+$. Both approaches give rise to weak convergence of measures, in particular they identify the limiting spectral density on macroscopic scale.

Armed with these macroscopic results,
 it is natural to ask for a {\it local law}, \ie for the smallest possible ($N$-dependent) scale so that the 
local eigenvalue density on that scale still converges as  $N$ tends to infinity. Local laws have been somewhat outside of the focus of free probability before Kargin's recent works.
 After having improved a concentration 
result for the Haar measure by Chatterjee~\cite{Chatterjee} by using the Gromov-Milman concentration inequality, Kargin obtained a
local law for the ensemble $H=A+UBU^*$  on scale  $\eta \gg (\log N)^{-1/2}$~\cite{Kargin2012}, \ie slightly below the macroscopic scale. Recently in~\cite{Kargin}, he improved this result down to scale $\eta\gg N^{-1/7}$
 under the above mentioned smoothness condition.
In Theorem~\ref{thm041801} we prove the local law 
down to scale $\eta = \im z\gg N^{-2/3}$ without any
additional assumption. 

To achieve this short scale, we effectively use the positivity of the imaginary
parts of the subordination functions by localizing the Gromov--Milman concentration
inequality within the spectrum.
 Since the subordination functions are obtained as 
the solution of a system of self-consistent equations whose derivation itself requires
bounds on the subordination functions, the reasoning seems circular. We break this
circularity by a continuity argument (similarly as in~\cite{ESY}) in which we reduce the imaginary part of
the spectral parameter in very small steps, use the previous step as an {\it a priori} bound
and show that the bound does not deteriorate by using the local  stability result, Theorem~\ref{le kkv}.

Finally, we remark that the local stability result is also a key ingredient in~\cite{BES15-2},
where we were able to prove a local law down to the smallest possible scale $\eta\gg N^{-1}$, but
with a weaker error bound than in Theorem~\ref{thm041801}; see Remark~\ref{remark:companion} for details.

\subsection{Notation}
 We use the symbols $O(\,\cdot\,)$ and $o(\,\cdot\,)$ for the standard big-O and little-o notation. We use~$c$ and~$C$ to denote positive numerical constants. Their values may change from line to line. For $a,b>0$, we write $a\lesssim b$, $a\gtrsim b$ if there is $C\ge1$ such that $a\le Cb$, $a\geq C^{-1} b$ respectively.  We write $a\sim b$, if $a\lesssim b$ and $a\gtrsim b$ both hold. We denote by $\|v\|$ the Euclidean norm of $v\in\C^N$. For an $N\times N$ matrix $A\in M_N(\C)$, we denote by $\|A\|$ its operator norm and by $\|A\|_2\deq\sqrt{\langle A,A\rangle}$ its Hilbert-Schmidt norm, where $\langle A,B \rangle\deq\mathrm{Trace}(AB^*)$, for $A,B\in M_N(\C)$. Finally, we denote by $\ntr\! A$ the normalized trace of $A$, \ie $\ntr\! A=\frac{1}{N}\mathrm{Trace}\,A$. 

\subsection*{Acknowledgment}
We thank an anonymous referee for many useful comments and remarks, and bringing references~\cite{Bel2,BW} to our attention.

\section{Main results} \label{section: Main results}

\subsection{Free additive convolution} \label{le subsection additive convolution} 
In this subsection, we recall the definition of the free additive convolution. Given a probability measure\footnote{All probability measures considered will be assumed to be Borel.} $\mu$ on $\R$, its {\it Stieltjes transform}, $m_\mu$, on the complex upper half-plane $\C^+\deq\{ z\in\C\,:\, \im z>0\}$ is defined by
\begin{align}\label{le definition of stieltjes transform}
 m_\mu(z)\deq\int_\R\frac{\dd\mu(x)}{x-z}\,, \qquad\qquad z\in\C^+\,.
\end{align}
We denote by $F_\mu$ the {\it negative reciprocal Stieltjes transform} of $\mu$, \ie
\begin{align}\label{le F definition}
 F_{\mu}(z)\deq -\frac{1}{m_{\mu}(z)}\,,\qquad \qquad z\in\C^+\,.
\end{align}
Observe that
 \begin{align}\label{le F behaviour at infinity}
\lim_{\eta\nearrow \infty}\frac{F_{\mu}(\ii\eta)}{\ii\eta}=1\,,
\end{align}
as follows easily from~\eqref{le definition of stieltjes transform}. Note, moreover, that $F_\mu$ is an analytic function on $\C^+$ with non-negative imaginary part. Conversely, if $F\,:\, \C^+\rightarrow \C^+$ is an analytic function such that $\lim_{\eta\nearrow\infty}  F(\ii\eta)/\ii\eta=1$, then $F$ is the negative reciprocal Stieltjes transform of a probability measure $\mu$, \ie $F(z)=F_\mu(z)$, for all $z\in\C^+$; see, \eg ~\cite{Aki}.

The {\it free additive convolution} is the binary operation on probability measures on $\R$ characterized by the following result.
\begin{proposition}[Theorem 4.1 in~\cite{BB}, Theorem~2.1 in~\cite{CG}]\label{le prop 1}
Given two probability measures $\mu_1$ and $\mu_2$ on $\R$, there exist unique analytic functions, $\omega_1\,,\omega_2\,:\,\C^+\rightarrow \C^+$, such that,
 \begin{itemize}[noitemsep,topsep=0pt,partopsep=0pt,parsep=0pt]
  \item[$(i)$] for all $z\in \C^+$, $\im \omega_1(z),\,\im \omega_2(z)\ge \im z$, and
  \begin{align}\label{le limit of omega}
  \lim_{\eta\nearrow\infty}\frac{\omega_1(\ii\eta)}{\ii\eta}=\lim_{\eta\nearrow\infty}\frac{\omega_2(\ii\eta)}{\ii\eta}=1\,;
  \end{align}
  \item[$(ii)$] for all $z\in\C^+$, 
\begin{align}
 \begin{aligned}\label{le definiting equations}
   &F_{\mu_1}(\omega_{2}(z))-\omega_1(z)-\omega_2(z)+z=0\,, \\
   &F_{\mu_2}(\omega_1(z))-\omega_1(z)-\omega_2(z)+z=0\,.
  \end{aligned}
\end{align}
 \end{itemize}
\end{proposition}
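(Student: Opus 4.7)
The plan is to reduce the two-variable system to a single-variable fixed-point problem for a holomorphic self-map of $\C^+$ and then invoke the Denjoy-Wolff theorem, following the analytic approach of Belinschi and Bercovici~\cite{BB}.

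For a probability measure $\mu$ on $\R$, a direct Cauchy-Schwarz argument applied to $m_\mu(w)=\int(t-w)^{-1}\dd\mu(t)$ yields the Nevanlinna-type bound $\im F_\mu(w)\ge \im w$ for every $w\in\C^+$, with equality if and only if $\mu$ is a Dirac mass. Hence $H_\mu(w)\deq F_\mu(w)-w$ defines a holomorphic map $\C^+\to\overline{\C^+}$, mapping $\C^+$ strictly into $\C^+$ unless $\mu$ is a point mass. Fix $z\in\C^+$ and define
\[
 f_z(\omega)\deq H_{\mu_2}\bigl(H_{\mu_1}(\omega)+z\bigr)+z.
\]
Since $\im z>0$, the argument of the outer $H_{\mu_2}$ lies in $\C^+$, so $f_z$ is a holomorphic self-map of $\C^+$. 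A short direct check shows that a pair $(\omega_1,\omega_2)$ satisfies both equations of \eqref{le definiting equations} with $\im\omega_j\ge\im z$ if and only if $\omega_2$ is a fixed point of $f_z$ in $\C^+$ and $\omega_1=H_{\mu_1}(\omega_2)+z$; the proposition thus reduces to existence and uniqueness of such a fixed point.

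To establish existence, one first sets aside the elementary case where some $\mu_j=\delta_{a_j}$, which admits an explicit solution. Otherwise the strict inequality $\im H_{\mu_j}(w)>0$ on $\C^+$ yields $\im f_z(\omega)>\im z$ for every $\omega\in\C^+$, preventing any iterate of $f_z$ from approaching $\R$; simultaneously \eqref{le F behaviour at infinity} forces $f_z(\ii\eta)/\ii\eta\to 1$ as $\eta\to\infty$, which rules out $\infty$ as a Denjoy-Wolff point. By the Denjoy-Wolff theorem (transferred from the unit disk to $\C^+$ via a Cayley map), the only remaining possibility is a fixed point of $f_z$ in $\C^+$. This fixed point $\omega_2(z)$ is unique because two distinct fixed points would force $f_z$ to be the identity by the Schwarz-Pick lemma, contradicting $\im f_z(\omega)\ge\im z>0$ at points $\omega\in\C^+$ with very small imaginary part.

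Analyticity of $\omega_1,\omega_2$ in $z$ follows from the holomorphic dependence of $f_z$ on $z$ combined with the implicit function theorem at the simple attracting fixed point. The bound $\im\omega_j(z)\ge\im z$ is built in by construction, and the normalization \eqref{le limit of omega} is verified by substituting $z=\ii\eta$ into \eqref{le definiting equations}, using \eqref{le F behaviour at infinity}, and observing that the branch with $\im\omega_j\ge\eta$ must satisfy $\omega_j(\ii\eta)/\ii\eta\to 1$. The main obstacle in this strategy is the boundary-exclusion step in Denjoy-Wolff — making quantitative the claim that iterates of $f_z$ cannot accumulate on $\R\cup\{\infty\}$ — which is precisely where the strict Nevanlinna inequality and the asymptotic normalization of $F_{\mu_j}$ play the decisive role.
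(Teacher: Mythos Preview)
The paper does not supply its own proof of this proposition; it is quoted as a known result with references to Belinschi--Bercovici~\cite{BB} and Chistyakov--G\"otze~\cite{CG}. Your sketch follows precisely the Belinschi--Bercovici route (reducing the system to a fixed-point equation for a holomorphic self-map of $\C^+$ and invoking Denjoy--Wolff), so there is nothing to compare against in the paper itself.

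Two minor corrections to your sketch. First, since $H_\mu(w)=F_\mu(w)-w$ has a finite nontangential limit as $\im w\to\infty$ (immediate from the Nevanlinna representation), one gets $f_z(\ii\eta)\to H_{\mu_2}(a_{\mu_1}+z)+z$, a finite value, so $f_z(\ii\eta)/(\ii\eta)\to 0$, not $1$. This only strengthens the exclusion of $\infty$ as Denjoy--Wolff point (the angular derivative at $\infty$ would have to be at least $1$). Second, two interior fixed points would force $f_z$ to be an automorphism of $\C^+$, not necessarily the identity; but the contradiction with $\im f_z(\omega)\ge\im z>0$ still goes through, since any automorphism of $\C^+$ is surjective and hence takes values with arbitrarily small imaginary part. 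With these adjustments your argument is sound.
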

It follows from~\eqref{le limit of omega} that the analytic function $F\,:\,\C^+\rightarrow \C^+$ defined by
\begin{align}\label{le kkv}
 F(z)\deq F_{\mu_1}(\omega_{2}(z))=F_{\mu_2}(\omega_{1}(z))\,,
\end{align}
satisfies~\eqref{le F behaviour at infinity}. Thus $F$ is the negative reciprocal Stieltjes transform of a probability measure~$\mu$, called the free additive convolution of $\mu_1$ and $\mu_2$,  usually denoted by $\mu\equiv\mu_1\boxplus\mu_2$. Note that~\eqref{le kkv} shows that the r\^oles of $\mu_1$ and $\mu_2$ are symmetric and thus $\mu_1\boxplus\mu_2=\mu_2\boxplus\mu_1$. The functions $\omega_1$ and $\omega_2$ of Proposition~\ref{le prop 1} are called {\it subordination functions} and $F$ is said to be subordinated to~$F_{\mu_1}$, respectively to $F_{\mu_2}$. 

We mention that Voiculescu~\cite{Voi86} originally introduced the free additive convolution in a different, algebraic manner.
The equivalent analytic definition 
based on the existence of subordination functions (taken up in Proposition~\ref{le prop 1} above) was introduced in~\cite{BB, CG}.

We next recall some basic examples. Choosing $\mu_1$ arbitrary and $\mu_2$ as a single point mass at $b\in \R$, it is easy to check that $\mu_{1}\boxplus\mu_2 $ simply is $\mu_1$ shifted by $b$. We exclude this uninteresting case by henceforth assuming that $\mu_1$ and~$\mu_2$ are both supported at more than one point. Choosing $\mu_1=\mu_2=\mu$ as the Bernoulli distribution
\begin{align*}
 \mu=(1-\xi)\delta_{0}+\xi \delta_{1}\,,\qquad\qquad \xi\in(0,1)\,,
\end{align*}
the free additive convolution is explicitly given by (see \eg~(5.5) of~\cite{VP})
\begin{align}\label{prototype of two point mass}
 (\mu\boxplus\mu)(x)=\frac{\sqrt{(\ell_+-x)_+(x-\ell_-)_+}}{\pi x(2-x)}+(1-2\xi)_+\delta_{0}(x)+(2\xi-1)_+\delta_2(x)\,,
\end{align}
$x\in\R$, where $\ell_\pm\deq 1\pm 2\sqrt{\xi(1-\xi)}$ and where $(\,\cdot\,)_+$ denotes the positive part. Observe that $\mu\boxplus\mu$ has a nonzero absolutely continuous part and, depending on the choice of $\xi$, a point mass. Another important choice for $\mu_2$ is Wigner's semicircle law $\mu_{\mathrm{sc}}$. For arbitrary $\mu_1$, $\mu_1\boxplus\mu_{\mathrm{sc}}$ is then purely absolutely continuous with a bounded density\footnote{All densities are with respect to Lebesgue measure on $\R$.}  that is real analytic wherever positive~\cite{B}. 

Returning to the generic setting, the atoms of $\mu_1\boxplus\mu_2$ are identified as follows.  A point $c\in\R$ is an atom of $\mu_1\boxplus\mu_2$, if and only if there exist $a,b\in\R$ such that $c=a+b$ and $\mu_1(\{a\})+\mu_2(\{b\})>1$; see [Theorem~7.4,~\cite{BeV98}]. For another interesting properties of the atoms of $\mu_1\boxplus\mu_2$ we refer the reader to~\cite{BW}. The boundary behavior of the functions $F_{\mu_1\boxplus\mu_2}$,~$\omega_1$ and~$\omega_2$ has been studied by Belinschi~\cite{Bel1,Bel,Bel2} who proved the next two results. For simplicity, we restrict the discussion to compactly supported probability measures.
\begin{proposition}[Theorem 2.3 in~\cite{Bel1}, Theorem~3.3 in~\cite{Bel}]\label{prop extension}
 Let $\mu_1$ and $\mu_2$ be compactly supported probability measures on $\R$, none of them being a single point mass. Then the functions $F_{\mu_1\boxplus\mu_2}$, $\omega_1$, $\omega_2\,:\, \C^+\to\C^+$ extend continuously to $\R$.
\end{proposition}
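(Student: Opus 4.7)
My plan is to follow the approach of Belinschi~\cite{Bel1,Bel} and recast the subordination system~\eqref{le definiting equations} as a coupled fixed-point problem in $\C^+$. Introduce the auxiliary maps $H_j(w) \deq F_{\mu_j}(w) - w$ for $j=1,2$; these are analytic from $\C^+$ into $\overline{\C^+}$ because $\im F_{\mu_j}(w) \geq \im w$. Since each $\mu_j$ is compactly supported and is not a single point mass, $H_j$ is non-constant, extends analytically to $\C\setminus\supp(\mu_j)$, and admits a Herglotz representation $H_j(w) = c_j + \int \dd\rho_j(x)/(x-w)$ with $\rho_j$ a finite positive Borel measure supported in $\supp(\mu_j)$. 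In these terms~\eqref{le definiting equations} is equivalent to $\omega_1(z) = z + H_1(\omega_2(z))$ and $\omega_2(z) = z + H_2(\omega_1(z))$, and by the argument of Belinschi--Bercovici~\cite{BB}, $\omega_1(z)$ is the unique Denjoy--Wolff attractor in $\C^+$ of the holomorphic self-map $T_z\colon w\mapsto z + H_1(z + H_2(w))$.

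To establish continuous extension at a point $x_0\in\R$, I would fix any sequence $z_n\to x_0$ in $\C^+$ and extract subsequential limits $w_j^\ast = \lim_k \omega_j(z_{n_k})$ in $\overline{\C^+}\cup\{\infty\}$; these exist by Montel's theorem applied to the normal family of Nevanlinna functions $\{\omega_j\}$. The goal is then to show the limits are independent of the subsequence. In the generic case $w_j^\ast\in\C^+\setminus\supp(\mu_{3-j})$, continuity of $F_{\mu_j}$ off the support permits passing to the limit in~\eqref{le definiting equations}, yielding the boundary system $w_1^\ast = x_0 + H_1(w_2^\ast)$ and $w_2^\ast = x_0 + H_2(w_1^\ast)$. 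Uniqueness of this boundary fixed point follows from the hyperbolic non-expansiveness of $T_{x_0}$ established in~\cite{BB}, promoted to the boundary via the Julia--Carath\'eodory theorem. Continuity of $F_{\mu_1\boxplus\mu_2}$ is then immediate from the identity $F_{\mu_1\boxplus\mu_2}(z)=\omega_1(z)+\omega_2(z)-z$ obtained by adding the two equations in~\eqref{le definiting equations}.

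The main obstacle is the degenerate scenario where a subsequential limit lies in the support of the partner measure, for instance $w_2^\ast\in\supp(\mu_1)$, at which $H_1$ need not have a classical boundary limit. I would split into two sub-cases. If the values $H_1(\omega_2(z_{n_k}))$ remain bounded, then the Julia--Carath\'eodory theorem confines the approach $\omega_2(z_{n_k})\to w_2^\ast$ to a non-tangential cone in $\C^+$, inside which $H_1$ extends continuously and the generic argument applies unchanged. If instead $H_1(\omega_2(z_{n_k}))\to\infty$, then $\omega_1(z_{n_k})\to\infty$, and the second subordination equation combined with the asymptotic $H_2(w)=-\int x\,\dd\mu_2(x)+O(1/w)$ as $w\to\infty$ uniquely pins down $w_2^\ast = x_0-\int x\,\dd\mu_2(x)$. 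The symmetric case $w_1^\ast\in\supp(\mu_2)$ is handled analogously, and the doubly degenerate case is excluded by noting that $\omega_1$ and $\omega_2$ cannot both tend to $\infty$ along $z_n\to x_0$ without contradicting the identity $\omega_1(z)=z+H_1(\omega_2(z))$. In all cases the limit is uniquely determined by $x_0$, yielding the continuous extension of $\omega_1,\omega_2$, and hence of $F_{\mu_1\boxplus\mu_2}$, to $\R$.
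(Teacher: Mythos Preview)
This proposition is not proved in the paper; it is simply quoted from Belinschi~\cite{Bel1,Bel} and used as a black box. So there is no ``paper's own proof'' to compare your proposal against.

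On the merits of your sketch: the setup via $H_j(w)=F_{\mu_j}(w)-w$ and the Denjoy--Wolff characterization of $\omega_1(z)$ as the attracting fixed point of $T_z$ is correct and is indeed the starting point in~\cite{BB,Bel1}. The generic case, where both subsequential limits lie in $\C^+$, is also fine: uniqueness of the Denjoy--Wolff point gives subsequence independence.

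The degenerate case, however, has a real gap. Your invocation of the Julia--Carath\'eodory theorem is backwards. Julia--Carath\'eodory asserts that \emph{if} an analytic self-map of $\C^+$ has finite angular derivative at a boundary point, \emph{then} it has a nontangential limit there. It does \emph{not} say that boundedness of $H_1(\omega_2(z_{n_k}))$ forces $\omega_2(z_{n_k})\to w_2^\ast$ to occur inside a Stolz cone; tangential approach is perfectly possible. Moreover, even along nontangential approach, $H_1$ need not extend continuously to a point $w_2^\ast\in\supp(\mu_1)$: for instance if $\mu_1$ has an atom there, $H_1$ blows up. So the first sub-case of your degenerate analysis does not go through as stated.

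Belinschi's actual argument in~\cite{Bel1,Bel} is more delicate: it proceeds by first establishing regularity of the free convolution $\mu_1\boxplus\mu_2$ itself (absence of singular continuous part, control of atoms), then using the identity $F_{\mu_1\boxplus\mu_2}(z)=\omega_1(z)+\omega_2(z)-z$ together with meromorphic-type extension properties of Nevanlinna functions to push continuity of $F_{\mu_1\boxplus\mu_2}$ back to $\omega_1,\omega_2$. The boundary analysis there handles precisely the degenerate cases you identify, but via a careful study of the possible cluster sets rather than an appeal to Julia--Carath\'eodory.
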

Belinschi further showed in Theorem~4.1 in~\cite{Bel} that the singular continuous part of $\mu_1\boxplus\mu_2$ is always zero and that the absolutely continuous part, $(\mu_1\boxplus\mu_2)^{\mathrm{ac}}$, of $\mu_1\boxplus\mu_2$ is always nonzero. We denote the density function of $(\mu_1\boxplus\mu_2)^{\mathrm{ac}}$ by $f_{\mu_1\boxplus\mu_2}$.

We are now ready to introduce our notion of {\it regular bulk}, $\mathcal{B}_{\mu_1\boxplus\mu_2}$, of $\mu_1\boxplus\mu_2$. Informally, we let $\mathcal{B}_{\mu_1\boxplus\mu_2}$ be the open set on which $\mu_1\boxplus\mu_2$ admits a continuous density that is strictly positive and bounded from above. For a formal definition we first introduce the set
\begin{align}\label{le set U}
\mathcal{U}_{\mu_1\boxplus\mu_2}\deq\mathrm{int}\,\bigg\{\supp (\mu_1\boxplus\mu_2)^{\mathrm{ac}}\,\big\backslash\,\{ x\in \R\,:\, \lim_{\eta\searrow 0}F_{\mu_1\boxplus\mu_2}(x+\ii\eta)=0\} \bigg\}\,.
\end{align}
Note that $\mathcal{U}_{\mu_1\boxplus\mu_2}$ does not contain any atoms of $\mu_1\boxplus\mu_2$.  By Privalov's  theorem the set $ \{ x\in \R\,:\, \lim_{\eta\searrow 0}F_{\mu_1\boxplus\mu_2}(x+\ii\eta)=0\}$ has Lebesgue measure zero. In fact, an even stronger statement applies for the case at hand. Belinschi~\cite{Bel2} showed that if $x\in\R$ is such that $\lim_{\eta\searrow 0}F_{\mu_1\boxplus\mu_2}(x+\ii\eta)=0$, then it must be of the form $x=a+b$ with $\mu_1(\{a\})+\mu_2(\{b\})\ge 1$, $a,b\in\R$. There could only be  finitely many such $x$, thus $\mathcal{U}_{\mu_1\boxplus\mu_2}$ must contain an open non-empty interval.
\begin{proposition}[Theorem~3.3 in~\cite{Bel}]\label{le real analytic prop}
Let $\mu_1$ and $\mu_2$ be as above and fix any $x\in\mathcal{U}_{\mu_1\boxplus\mu_2}$. Then $F_{\mu_1\boxplus\mu_2}$, $\omega_1$, $\omega_2\,:\,\C^+\rightarrow \C^+$ extend analytically around $x$. In particular, the density function $f_{\mu_1\boxplus\mu_2}$ is real analytic in $\mathcal{U}_{\mu_1\boxplus\mu_2}$ wherever positive.
\end{proposition}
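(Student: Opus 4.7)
The approach is to apply the analytic implicit function theorem to the system~\eqref{le definiting equations} at $z=x_0\in\mathcal{U}_{\mu_1\boxplus\mu_2}$. Write $\omega_j^0:=\lim_{\eta\searrow 0}\omega_j(x_0+\ii\eta)$ and $F^0:=\lim_{\eta\searrow 0}F_{\mu_1\boxplus\mu_2}(x_0+\ii\eta)$; these exist in $\overline{\C^+}$ and in $\C\setminus\{0\}$, respectively, by Proposition~\ref{prop extension} and the definition of $\mathcal{U}_{\mu_1\boxplus\mu_2}$. I first treat the generic case $f_{\mu_1\boxplus\mu_2}(x_0)>0$, where two facts must be verified: (i) $\im\omega_j^0>0$ for $j=1,2$, so that $F_{\mu_1}$ and $F_{\mu_2}$ are analytic in complex neighborhoods of $\omega_2^0$ and $\omega_1^0$; and (ii) the Jacobian $\det J = 1-(F_{\mu_1}'(\omega_2^0)-1)(F_{\mu_2}'(\omega_1^0)-1)$ of the system in $(w_1,w_2)$ is nonzero.

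For~(i), summing the two identities in~\eqref{le definiting equations} gives $\omega_1(z)+\omega_2(z) = z + F_{\mu_1\boxplus\mu_2}(z)$; comparing imaginary parts at $z=x_0$ yields $\im\omega_1^0+\im\omega_2^0=\im F^0>0$. If $\im\omega_2^0=0$, then $\im\omega_1^0>0$, and rearranging the second equation of~\eqref{le definiting equations} as $\omega_2(z)=F_{\mu_2}(\omega_1(z))+z-\omega_1(z)$ and comparing imaginary parts forces $\im F_{\mu_2}(\omega_1^0)=\im\omega_1^0$, contradicting the strict Nevanlinna inequality $\im F_{\mu}(w)>\im w$ valid for $w\in\C^+$ and any $\mu$ that is not a single point mass (visible from its Herglotz representation). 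A symmetric argument rules out $\im\omega_1^0=0$. For~(ii), introduce the Herglotz self-maps $h_{\mu_j}(w):=F_{\mu_j}(w)-w$ of $\C^+$, which are not M\"obius automorphisms since neither $\mu_j$ is a point mass; by~(i), $\omega_j^0\in\C^+$, so the strict form of Schwarz--Pick applies. From~\eqref{le definiting equations}, $h_{\mu_1}(\omega_2^0)=\omega_1^0-x_0$ and $h_{\mu_2}(\omega_1^0)=\omega_2^0-x_0$, hence
\[
|(F_{\mu_1}'(\omega_2^0)-1)(F_{\mu_2}'(\omega_1^0)-1)| = |h_{\mu_1}'(\omega_2^0)\,h_{\mu_2}'(\omega_1^0)| < \frac{\im\omega_1^0}{\im\omega_2^0}\cdot\frac{\im\omega_2^0}{\im\omega_1^0}=1,
\]
which gives $\det J\neq 0$.

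Having~(i) and~(ii), the analytic implicit function theorem extends $(\omega_1,\omega_2)$ analytically to a full complex neighborhood of $x_0$, and $F_{\mu_1\boxplus\mu_2}=F_{\mu_1}\circ\omega_2$ inherits analyticity. Since $F^0\neq 0$, the identity $f_{\mu_1\boxplus\mu_2}(x)=\pi^{-1}\im(1/F_{\mu_1\boxplus\mu_2}(x))$ gives real analyticity of $f_{\mu_1\boxplus\mu_2}$ at $x_0$, which is the ``in particular'' assertion. The residual case $x_0\in\mathcal{U}_{\mu_1\boxplus\mu_2}$ with $f_{\mu_1\boxplus\mu_2}(x_0)=0$ (where necessarily $\omega_j^0\in\R$) is handled by a removable-singularity argument: the analytic extensions of $F_{\mu_1\boxplus\mu_2}, \omega_1, \omega_2$ already obtained on the open set $\{x\in\mathcal{U}_{\mu_1\boxplus\mu_2}:f_{\mu_1\boxplus\mu_2}(x)>0\}$, combined with the continuity and boundedness at $x_0$ supplied by Proposition~\ref{prop extension}, patch across the (necessarily isolated) zero $x_0$ via Riemann's removable singularity theorem.

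The main obstacle is step~(i): the Nevanlinna strict-inequality argument works only when $\im F^0>0$, which forces the residual case $f_{\mu_1\boxplus\mu_2}(x_0)=0$ to be treated separately, itself contingent on the isolation of zeros of $f_{\mu_1\boxplus\mu_2}$ inside $\mathcal{U}_{\mu_1\boxplus\mu_2}$. Step~(ii) is a clean Schwarz--Pick application, but rests entirely on having first established $\omega_j^0\in\C^+$ in step~(i).
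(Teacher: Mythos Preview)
The paper does not itself prove Proposition~\ref{le real analytic prop}; it is quoted from Belinschi~\cite{Bel}. Your strategy, however, is exactly the one the paper deploys in Section~\ref{s. stability} to prove the closely related quantitative statement Theorem~\ref{thm stability}: step~(i) is the argument of Lemma~\ref{le lemma stability}, and step~(ii) is the Schwarz--Pick variant that the paper records in the remark immediately following Lemma~\ref{lemma linear stability}.

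There is a genuine gap in step~(ii). Your claim that $h_{\mu_j}=F_{\mu_j}-\mathrm{id}$ is ``not a M\"obius automorphism since $\mu_j$ is not a point mass'' is false: when $\mu_j$ is supported at exactly two points, $h_{\mu_j}$ \emph{is} a M\"obius automorphism of $\C^+$ (for instance $\mu=\tfrac12\delta_0+\tfrac12\delta_1$ gives $h_\mu(z)=z/(1-2z)$), and Schwarz--Pick then yields equality $|h_{\mu_j}'(\omega_k^0)|=\im\omega_j^0/\im\omega_k^0$. So if both $\mu_1$ and $\mu_2$ are two-point measures your product equals~$1$ in modulus and you cannot conclude $\det J\neq0$. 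This is precisely why the paper's own stability result, Theorem~\ref{thm stability}, carries the extra hypothesis ``at least one of them is supported at more than two points'' (see Lemma~\ref{le lemma stability constants}, inequality~\eqref{le second stability constant}, and the remark after Lemma~\ref{lemma linear stability}), and why the double-two-point case is treated by explicit computation in Section~\ref{le final section}. Your argument does go through as soon as at least one $\mu_j$ has three or more support points, because then one Schwarz--Pick factor is strict and the other is $\le1$.

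A secondary issue: in the residual case $f_{\mu_1\boxplus\mu_2}(x_0)=0$ you invoke Riemann's theorem across a ``necessarily isolated'' zero, but the isolation is not justified --- it would follow from real analyticity of $f_{\mu_1\boxplus\mu_2}$, which is what you are proving.
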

The regular bulk is obtained from $\mathcal{U}_{\mu_1\boxplus\mu_2}$ by removing the zeros of $f_{\mu_1\boxplus\mu_2}$ inside $\mathcal{U}_{\mu_1\boxplus\mu_2}$.
\begin{definition}
The regular bulk of the measure $\mu_1\boxplus\mu_2$ is defined as the set
\begin{align}
 \mathcal{B}_{\mu_1\boxplus\mu_2}\deq\mathcal{U}_{\mu_1\boxplus\mu_2}\,\backslash\, \left\{x\in\mathcal{U}_{\mu_1\boxplus\mu_2}\,:\,f_{\mu_1\boxplus\mu_2}	(x)=0\right\}\,.
\end{align}
\end{definition}
Note that $\mathcal{B}_{\mu_1\boxplus\mu_2}$ is an open non-empty set on which $\mu_1\boxplus\mu_2$ admits the density $f_{\mu_1\boxplus\mu_2}$. The density is strictly positive and thus (by Proposition~\ref{le real analytic prop}) real analytic on $\mathcal{B}_{\mu_1\boxplus\mu_2}$.

\subsection{Stability Result}
To present our results it is convenient to recast~\eqref{le definiting equations} in a compact form: For generic probability measures $\mu_1,\mu_2$ as above, let the function $\PP_{\mu_1,\mu_2}\,:\, (\C^+)^{3}\rightarrow \C^2$ be given by
\begin{align}\label{le H system defs}
\PP_{\mu_1,\mu_2}(\omega_1,\omega_2,z)\deq\left(\begin{array}{cc} F_{\mu_1}(\omega_2)-\omega_1-\omega_2+z \\ F_{\mu_2}(\omega_1)-\omega_1-\omega_2+z \end{array}\right)\,.
\end{align}
Considering $\mu_1,\mu_2$ as fixed, the equation
\begin{align}\label{le H system}
\PP_{\mu_1,\mu_2}(\omega_1,\omega_2,z)=0\,,
\end{align}
is equivalent to~\eqref{le definiting equations} and, by Proposition~\ref{le prop 1}, there are unique analytic functions $\omega_1,\omega_2\,:\, \C^+\rightarrow \C^+$, $z\mapsto \omega_1(z),\omega_2(z)$ satisfying~\eqref{le limit of omega} that solve~\eqref{le H system} in terms of $z$. We use the following conventions: We denote by $\omega_1$ and $\omega_2$ generic variables on $\C^+$ and we denote, with a slight abuse of notation, by $\omega_1(z)$ and $\omega_2(z)$ the subordination functions solving~\eqref{le H system} in terms of~$z$. When no confusion can arise, we simply write~$\PP$ for $\PP_{\mu_1,\mu_2}$. 

We call the system~\eqref{le H system} {\it linearly $S$-stable} at $(\omega_1,\omega_2)$ if
\begin{align}\label{le what stable means}
\Gamma_{\mu_1,\mu_2}(\omega_1,\omega_2)\deq \left\|\left(\begin{array}{cc}
-1& F_{\mu_1}'(\omega_2)-1  \\
F_{\mu_2}'(\omega_1)-1& -1\\
  \end{array}\right)^{-1} \right\|\le S\,,
\end{align}
for some constant $S$. Especially, the partial Jacobian matrix, $\mathrm{D}\PP(\omega_1,\omega_2)$, of~\eqref{le H system defs} given by
\begin{align*}
 \mathrm{D}\PP(\omega_1,\omega_2)\deq\left(\frac{\partial \PP}{\partial \omega_1}(\omega_1,\omega_2,z) \,,\,\frac{\partial \PP}{\partial\omega_2}(\omega_1,\omega_2,z) \right)=\left(\begin{array}{cc}
-1& F_{\mu_1}'(\omega_2)-1  \\
F_{\mu_2}'(\omega_1)-1 & -1\\
  \end{array}\right),
\end{align*}
admits a bounded inverse at $(\omega_1,\omega_2)$. Note that $ \mathrm{D}\PP(\omega_1,\omega_2)$ is independent of $z$.

Our first main result shows that the system~\eqref{le H system} is linearly stable and that the imaginary parts of the subordination functions are bounded below in the regular bulk. We require some more notation: For $a,b\ge 0$, $b\ge a$, and an interval $\mathcal{I}\subset \R$, we introduce the domain
\begin{align}\label{le domain S}
 \mathcal{S}_{\mathcal{I}}(a,b)\deq \{z=E+\ii\eta\in\C^+\,:\,E\in \mathcal{I}\,,  a\le \eta\le b\}\,.
\end{align}

\begin{theorem} \label{thm stability}
Let $\mu_1$ and $\mu_2$ be compactly supported probability measures on $\R$, and assume that neither is supported at a single point and that at least one of them is supported at more than two points. Let $\mathcal{I}\subset\mathcal{B}_{\mu_1\boxplus\mu_2}$ be a compact non-empty interval and fix some $0<\eta_{\mathrm{M}}<\infty$. 

Then there are two constants $k>0$ and $S<\infty$, both depending on the measures~$\mu_1$ and~$\mu_2$, on the interval $\mathcal{I}$ as well as on the constant $\eta_{\mathrm{M}}$, such that following statements hold.
\begin{itemize}[noitemsep,topsep=0pt,partopsep=0pt,parsep=0pt]
\item[$(i)$] The imaginary parts $\im \omega_1$ and $\im \omega_2$ of the subordination functions associated with~$\mu_1$ and~$\mu_2$ satisfy
\begin{align}\label{le thm lower bound on omega equation}
  \min_{z\in \mathcal{S}_{\mathcal{I}}(0,\eta_{\mathrm{M}})}\im \omega_1(z)\ge 2k\,,\qquad\qquad \min_{z\in \mathcal{S}_{\mathcal{I}}(0,\eta_{\mathrm{M}})}\im \omega_2(z)\ge 2k\,.
\end{align}
\item[$(ii)$] The system $\PP_{\mu_1,\mu_2}(\omega_1,\omega_2,z)=0$ is linearly $S$-stable at $(\omega_1(z),\omega_2(z))$ uniformly in $\mathcal{S}_{\mathcal{I}}(0,\eta_{\mathrm{M}})$, \ie
\begin{align}\label{le thm linear stability equation}
\max_{z\in\mathcal{S}_{\mathcal{I}}(0,\eta_{\mathrm{M}})}\Gamma_{\mu_1,\mu_2}(\omega_1(z),\omega_2(z))\le S\,.
\end{align}
\end{itemize}
\end{theorem}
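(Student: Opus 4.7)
The plan is to reduce both parts to the Nevanlinna representation of $F_{\mu_i}$, which converts the subordination equations into explicit identities for $\im\omega_i$ and reduces the stability claim to a strict triangle inequality. Writing
\[
  F_{\mu_i}(w) = w + b_i + \int_\R \frac{1+xw}{x-w}\,\dd\rho_i(x)\,,\qquad Q_i(w) \deq \int_\R \frac{1+x^2}{|x-w|^2}\,\dd\rho_i(x)\,,
\]
with $\rho_i$ a finite positive Borel measure which is nonzero because $\mu_i$ is not a point mass, a short computation yields $\im F_{\mu_i}(w) = (\im w)(1+Q_i(w))$ and $F_{\mu_i}'(w)-1 = \int (1+x^2)(x-w)^{-2}\,\dd\rho_i(x)$; in particular $Q_i(w)>0$ whenever $\im w>0$.

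For part~(i), taking imaginary parts of~\eqref{le definiting equations} at $z=x+\ii\eta$ produces
\[
  \im\omega_1(z) = \eta + Q_1(\omega_2(z))\,\im\omega_2(z)\,,\qquad \im\omega_2(z) = \eta + Q_2(\omega_1(z))\,\im\omega_1(z)\,.
\]
Suppose for contradiction that $\im\omega_1(x_0)=0$ for some $x_0\in\mathcal{I}$. Then $\eta=0$, and $x_0\in\mathcal{B}_{\mu_1\boxplus\mu_2}$ forces $\im F(x_0)>0$; combined with $\im F=\im\omega_1+\im\omega_2-\eta$ (from~\eqref{le kkv} and~\eqref{le definiting equations}) this gives $\im\omega_2(x_0)>0$. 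Substituting back yields $0 = Q_1(\omega_2(x_0))\,\im\omega_2(x_0) > 0$, a contradiction, and symmetrically for $\omega_2$. Continuity of $\omega_i$ on the compact set $\mathcal{S}_\mathcal{I}(0,\eta_{\mathrm{M}})$ (Propositions~\ref{prop extension} and~\ref{le real analytic prop}), together with Proposition~\ref{le prop 1}, then upgrades pointwise positivity to the uniform lower bound~\eqref{le thm lower bound on omega equation}.

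For part~(ii), by~(i) the map $z\mapsto(\omega_1(z),\omega_2(z))$ sends $\mathcal{S}_\mathcal{I}(0,\eta_{\mathrm{M}})$ into a compact set $K\subset\{w\in\C:\im w\geq 2k\}$, on which $F_{\mu_i}'$ is continuous and bounded, so the adjugate matrix in~\eqref{le what stable means} is uniformly bounded and it suffices to bound $|\det\mathrm{D}\PP|=|1-(F_{\mu_1}'(\omega_2)-1)(F_{\mu_2}'(\omega_1)-1)|$ from below. The triangle inequality gives $|F_{\mu_i}'(w)-1|\leq Q_i(w)$, while rearranging the identities from~(i) yields
\[
  Q_1(\omega_2(z))\,Q_2(\omega_1(z)) = \frac{(\im\omega_1(z)-\eta)(\im\omega_2(z)-\eta)}{\im\omega_1(z)\,\im\omega_2(z)}\leq 1\,.
\]
The assumption that at least one $\mu_i$ has more than two support points ensures that at least one of $\rho_1,\rho_2$, say $\rho_1$, is supported at two or more points. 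For such $\rho_1$ and any $w\in\C^+$ the triangle inequality $|F_{\mu_1}'(w)-1|\leq Q_1(w)$ is strict, because $(x_1-w)\overline{(x_2-w)}$ has non-zero imaginary part $(\im w)(x_1-x_2)$ for distinct $x_1,x_2\in\supp\rho_1$, so $\arg(x-w)^{-2}$ is not $\rho_1$-a.s.\ constant. Continuity and compactness of~$K$ upgrade this to a uniform gap $|F_{\mu_1}'(w)-1|\leq(1-\delta)Q_1(w)$ for some $\delta>0$, whence
\[
  |\det\mathrm{D}\PP|\geq 1-(1-\delta)\,Q_1(\omega_2)\,Q_2(\omega_1)\geq\delta\,,
\]
yielding~\eqref{le thm linear stability equation}.

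The principal obstacle is the lower bound in~(i) at the boundary $\eta=0$: Proposition~\ref{le prop 1} degenerates there, and continuous extension of $\omega_i$ does not on its own preclude $\im\omega_i(x)=0$ on $\mathcal{I}$; the Nevanlinna representation is what turns the subordination identities into a self-improving bound ruling this out. A related subtlety is that the strict triangle inequality driving~(ii) fails when a Nevanlinna measure $\rho_i$ reduces to a single atom, which happens precisely when $\mu_i$ has exactly two atoms, and this is what dictates the ``more than two points'' hypothesis.
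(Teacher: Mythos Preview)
Your proposal is correct and essentially the same as the paper's proof: both rely on the Nevanlinna representation of $F_{\mu_i}$ to obtain the identity $\im\omega_1-\eta = Q_1(\omega_2)\,\im\omega_2$ for part~(i) and the strict inequality $|F_{\mu_1}'(w)-1|<Q_1(w)$ (uniform on compacta when $\rho_1$ has at least two atoms) for the determinant lower bound in part~(ii). The only difference is organizational---the paper packages these ingredients into Lemmas~\ref{le lemma stability constants}--\ref{lemma linear stability} and first bounds $|\omega_i|$ explicitly using compact support of $\mu_i$, whereas you obtain compactness of the image of $\mathcal{S}_\mathcal{I}(0,\eta_{\mathrm{M}})$ directly from continuity of $\omega_i$ on the closed rectangle.
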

\begin{remark}
 The assumption that neither of $\mu_1$, $\mu_2$ is a point mass guarantees that the free additive convolution is not a simple translate.  The case when both, $\mu_1$ and $\mu_2$ are combinations of two point masses is special and its discussion is postponed to Section~\ref{le final section}.
\end{remark}

Theorem~\ref{thm stability} has the following local stability result as corollary.
\begin{corollary}\label{corollary perturbation}
Let $\mu_1$, $\mu_2$ and $\mathcal{S}_{\mathcal{I}}(0,\eta_{\mathrm{M}})$ be as in Theorem~\ref{thm stability}. Fix $z_0\in\C^+$. Assume that the functions $\widetilde\omega_1$, $\widetilde\omega_2$, $\widetilde{r}_1$, $\widetilde{r}_2\,:\,\C^+\rightarrow \C$ satisfy $\im\widetilde\omega_1(z_0)>0$, $\im\widetilde\omega_2(z_0)>0$ and 
\begin{align}
 \PP_{\mu_1,\mu_2}(\widetilde\omega_1(z_0),\widetilde\omega_2(z_0),z_0)=\widetilde r(z_0)\,,
\end{align}
with $\widetilde r(z)\deq(\widetilde r_1(z),\widetilde r_2(z))^\top$. Let $\omega_1$, $\omega_2$ be the subordination functions solving the system $\PP_{\mu_1,\mu_2}(\omega_1(z),\omega_2(z),z)=0$, $z\in\C^+$.

Then there exists a (small) constant $\delta_0>0$ such that whenever we have
\begin{align}\label{le huhu}
 |\widetilde\omega_1(z_0)-\omega_1(z_0)|\le\delta_0\,,\qquad |\widetilde\omega_1(z_0)-\omega_1(z_0)|\le \delta_0\,,
\end{align}
we also have
\begin{align}
 |\widetilde\omega_1(z_0)-\omega_1(z_0)|\le 2S\|\widetilde r(z_0)\|\,,\qquad\qquad |\widetilde\omega_2(z_0)-\omega_2(z_0)|\le 2S\|\widetilde r(z_0)\|\,.
\end{align}
The constant $\delta_0>0$ depends on~$\mu_1$ and~$\mu_2$, on the interval $\mathcal{I}$ as well as on~$\eta_{\mathrm{M}}$.
\end{corollary}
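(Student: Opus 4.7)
The corollary is a standard Newton-type perturbation statement, combining Theorem~\ref{thm stability}(ii) (bounded inverse Jacobian) with Theorem~\ref{thm stability}(i) (to control the Taylor remainder). I tacitly take $z_0\in\mathcal{S}_\mathcal{I}(0,\eta_{\mathrm{M}})$, so that both items apply at $(\omega_1(z_0),\omega_2(z_0))$ with the constants $k$ and $S$ supplied by that theorem.

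\textbf{Step 1: Taylor expansion and remainder.} Subtract the identity $\PP_{\mu_1,\mu_2}(\omega_1(z_0),\omega_2(z_0),z_0)=0$ from the definition of $\widetilde r(z_0)$. Writing $\Delta\deq(\widetilde\omega_1(z_0)-\omega_1(z_0),\,\widetilde\omega_2(z_0)-\omega_2(z_0))^{\top}$ and observing that the only nonlinear dependence of $\PP_{\mu_1,\mu_2}$ on $(\omega_1,\omega_2)$ enters through the scalar analytic functions $F_{\mu_1}(\omega_2)$ and $F_{\mu_2}(\omega_1)$, a second-order Taylor expansion yields
\begin{equation*}
\mathrm{D}\PP(\omega_1(z_0),\omega_2(z_0))\,\Delta \;+\; \mathcal{R}(\Delta) \;=\; \widetilde r(z_0),
\end{equation*}
where $\mathcal{R}(\Delta)$ is a quadratic remainder whose components are line integrals of $F_{\mu_2}''$ and $F_{\mu_1}''$ along the segments joining $\omega_j(z_0)$ to $\widetilde\omega_j(z_0)$. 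Provided $\delta_0\le k$ in \eqref{le huhu}, Theorem~\ref{thm stability}(i) forces those segments to lie in the half-plane $\{\omega\in\C:\im \omega\ge k\}$, on which $F_{\mu_i}$ is analytic. A Cauchy estimate on a disk of radius $k/2$ centered at $\omega_j(z_0)$, together with the uniform upper bound on $|\omega_j(z_0)|$ provided by Proposition~\ref{prop extension} and compactness of $\mathcal{S}_\mathcal{I}(0,\eta_{\mathrm{M}})$, gives $|F_{\mu_i}''|\le C$ on each such disk with $C=C(\mu_1,\mu_2,\mathcal{I},\eta_{\mathrm{M}})$. Hence $\|\mathcal{R}(\Delta)\|\le C'\|\Delta\|^2$ for some $C'$ of the same dependence.

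\textbf{Step 2: Inversion and conclusion.} Applying $\mathrm{D}\PP(\omega_1(z_0),\omega_2(z_0))^{-1}$ to both sides and using the stability bound \eqref{le thm linear stability equation},
\begin{equation*}
\|\Delta\| \;\le\; S\,\|\widetilde r(z_0)\| \;+\; S C'\,\|\Delta\|^2 \;\le\; S\,\|\widetilde r(z_0)\| \;+\; S C'\delta_0\,\|\Delta\|.
\end{equation*}
Choosing $\delta_0\le\min\{k,\,(2SC')^{-1}\}$ absorbs the last term into the left-hand side with a factor $\tfrac12$ and produces the desired inequality $\|\Delta\|\le 2S\|\widetilde r(z_0)\|$, from which both componentwise bounds follow. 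The only mildly delicate point in this plan is the quantitative control of $F_{\mu_i}''$ along the segments: it requires simultaneously a positive distance to the real axis (from Theorem~\ref{thm stability}(i)) and a uniform upper bound on $|\omega_j(z_0)|$ (from continuous extension and compactness). Everything else is essentially the implicit function theorem applied in a quantitative form.
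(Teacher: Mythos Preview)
Your argument is correct and mirrors the paper's own route: the paper defers the corollary to Proposition~\ref{le proposition perturbation of system}, whose proof is exactly the Taylor-expand, bound-the-quadratic-remainder, invert-the-Jacobian scheme you carry out. The only cosmetic difference is that the paper bounds $|F_{\mu_i}^{(n)}(\omega_j(z_0))|$ for all $n\ge 2$ directly from the Nevanlinna representation~\eqref{le neva for F} (obtaining the explicit constant $K/(4k^2)$ in front of $\|\Delta\|^2$), whereas you appeal to Cauchy estimates together with the upper bound on $|\omega_j(z_0)|$; both yield the same quadratic remainder estimate and the same absorption/dichotomy conclusion.
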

We omit the proof of Corollary~\ref{corollary perturbation} from Theorem~\ref{thm stability}, since it follows directly from Proposition~\ref{le proposition perturbation of system} in Section~\ref{section perturbation} below.

\subsection{Applications}
We next explain two main applications of the stability estimates obtained in Theorem~\ref{thm stability}.
\subsubsection{Continuity of the free additive convolution}
Our first application shows that the free additive convolution is a continuous operation when the image is equipped with the topology of local uniform convergence of the density in the regular bulk; see~\eqref{le uniform pointwise}. Bercovici and Voiculescu (Proposition~4.13 of~\cite{BeV93}) showed that the free additive convolution is continuous with respect to weak convergence of measures. More precisely, given two pairs of probability measures $\mu_A$, $\mu_B$ and $\mu_\alpha$, $\mu_\beta$ on $\R$, the measures $\mu_A\boxplus\mu_B$ and $\mu_\alpha\boxplus\mu_\beta$ satisfy
\begin{align}\label{continuity of free additive convolution}
 \dL(\mu_A\boxplus\mu_B,\mu_\alpha\boxplus\mu_\beta)\le \dL(\mu_A,\mu_\alpha)+\dL(\mu_B,\mu_\beta)\,,
\end{align}
where $\dL$ denotes the L\'evy distance. In particular, weak convergence of $\mu_A$ to $\mu_\alpha$ and weak convergence of $\mu_B$ to $\mu_\beta$ imply weak convergence of $\mu_A\boxplus\mu_B$ to $\mu_\alpha\boxplus\mu_\beta$.

Using the Stieltjes transform, we can easily link~\eqref{continuity of free additive convolution} to the systems of equations in~\eqref{le definiting equations}, respectively in~\eqref{le H system defs}. Using integration by parts and the definition of the Stieltjes transform, a direct computation reveals that there is a numerical constant $C$ such that
\begin{align}\label{le direct consequence of continuity}
 |m_{\mu_{A}\boxplus\mu_{B}}(z)-m_{\mu_\alpha\boxplus\mu_\beta}(z)|&\le\frac{C}{\eta}\Big(1+\frac{1}{\eta}\Big) \dL(\mu_A\boxplus\mu_B,\mu_\alpha\boxplus\mu_\beta)\nonumber\\
 &\le\frac{C}{\eta}\Big(1+\frac{1}{\eta}\Big)(\dL(\mu_A,\mu_\alpha)+\dL(\mu_B,\mu_\beta))\,,\qquad \eta=\im z\,,
\end{align}
for all $z\in\C^+$, where we used~\eqref{continuity of free additive convolution} to get the second line. Note that the estimate in~\eqref{le direct consequence of continuity} deteriorates as $\eta$ approaches the real line. Our next result strengthens~\eqref{le direct consequence of continuity} as follows. We consider the measure $\mu_\alpha\boxplus\mu_\beta$ as ``reference'' measure  (in the sense that it locates the regular bulk) while $\mu_A$, $\mu_B$ are arbitrary probability measures and show that the L\'evy distances bound $|m_{\mu_{A}\boxplus\mu_{B}}(E+\ii\eta)-m_{\mu_\alpha\boxplus\mu_\beta}(E+\ii\eta)|$ uniformly in~$\eta$, for all~$E$ inside the regular bulk of	 $\mu_\alpha\boxplus\mu_\beta$. 

\begin{theorem}\label{le theorem continuity}
Let $\mu_\alpha$ and $\mu_\beta$ be compactly supported probability measures on $\R$, and assume that neither is supported at a single point and that at least one of them is supported at more than two points. Let $\mathcal{I}\subset\mathcal{B}_{\mu_\alpha\boxplus\mu_\beta}$ be a compact non-empty interval and fix some $0<\eta_{\mathrm{M}}<\infty$. Let~$\mu_A$ and $\mu_B$ be two arbitrary probability measures on $\R$. 

Then there are constants $b>0$ and $Z<\infty$, both depending on the measures $\mu_\alpha$ and $\mu_\beta$, on the interval $\mathcal{I}$ as well as on the constant $\eta_{\mathrm{M}}$,  such that whenever
\begin{align}\label{new condition}
\dL(\mu_A,\mu_\alpha)+\dL(\mu_B,\mu_\beta)\le b
\end{align}
holds, then
\begin{align}\label{le ksv statment}
 \max_{z\in\mathcal{S}_{\mathcal{I}}(0,\eta_{\mathrm{M}})}\big|m_{\mu_A\boxplus \mu_B}(z)-m_{\mu_\alpha\boxplus\mu_\beta}(z)\big|\le Z\left(\dL(\mu_A,\mu_\alpha)+\dL(\mu_B,\mu_\beta)\right)\,,
\end{align}
holds, too.
\end{theorem}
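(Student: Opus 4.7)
\emph{Strategy, setup, and residual.} The plan is to treat the subordination functions $(\omega_1^{AB}, \omega_2^{AB})$ of $(\mu_A, \mu_B)$ as an approximate solution of the reference system $\PP_{\mu_\alpha, \mu_\beta} = 0$, control the resulting residual by the L\'evy distance, and invoke Corollary~\ref{corollary perturbation} pointwise. First, apply Theorem~\ref{thm stability} to $(\mu_\alpha, \mu_\beta)$ to obtain constants $k > 0$, $S < \infty$ and subordination functions $\omega_1, \omega_2$ extending continuously to $\overline{\mathcal{S}_{\mathcal{I}}(0, \eta_{\mathrm{M}})}$ (Proposition~\ref{prop extension}) with $\im \omega_i \ge 2k$ and $\Gamma_{\mu_\alpha,\mu_\beta}(\omega_1,\omega_2) \le S$; by compactness, $|\omega_i(z)| \le M_0$ there. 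Subtracting $\PP_{\mu_A, \mu_B}(\omega_1^{AB}, \omega_2^{AB}, z) = 0$ gives
\begin{align*}
\PP_{\mu_\alpha, \mu_\beta}(\omega_1^{AB}(z), \omega_2^{AB}(z), z) = \widetilde r(z) = \begin{pmatrix} (F_{\mu_\alpha} - F_{\mu_A})(\omega_2^{AB}(z)) \\ (F_{\mu_\beta} - F_{\mu_B})(\omega_1^{AB}(z)) \end{pmatrix}.
\end{align*}
The integration-by-parts argument underlying~\eqref{le direct consequence of continuity}, applied generically, yields $|m_\mu(w) - m_\nu(w)| \le C(\im w)^{-1}(1 + (\im w)^{-1}) \dL(\mu, \nu)$ for arbitrary probability measures $\mu, \nu$ and $w \in \C^+$. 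On the compact region $K \deq \{w \in \C^+ : \im w \ge k,\ |w| \le M_0 + 1\}$, $|m_{\mu_\alpha}|, |m_{\mu_\beta}| \ge c_1 > 0$ (by continuity, compactness, and strict positivity of $\im m_{\mu_\alpha}$); for $b$ small, also $|m_{\mu_A}|, |m_{\mu_B}| \ge c_1/2$. Since $F_\mu - F_\nu = (m_\nu - m_\mu)/(m_\mu m_\nu)$,
\begin{align*}
|(F_{\mu_A} - F_{\mu_\alpha})(w)| \le C^* \dL(\mu_A, \mu_\alpha), \quad |(F_{\mu_B} - F_{\mu_\beta})(w)| \le C^* \dL(\mu_B, \mu_\beta), \quad w \in K.
\end{align*}

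\emph{Bootstrap.} Let $\delta_0 > 0$ be the constant of Corollary~\ref{corollary perturbation}, shrunk so that $\delta_0 \le k$. Define
\begin{align*}
\Omega \deq \bigl\{z \in \mathcal{S}_{\mathcal{I}}(0, \eta_{\mathrm{M}}) : |\omega_i^{AB}(z) - \omega_i(z)| \le \delta_0 \text{ for } i = 1, 2\bigr\},
\end{align*}
which is closed by continuity of the two pairs of subordination functions. Since Theorem~\ref{thm stability} may be invoked on any larger strip $\mathcal{S}_{\mathcal{I}}(0, \eta_{\mathrm{M}}')$, we may enlarge $\eta_{\mathrm{M}}$ (the final bound on the original strip is preserved, with constants now depending on the enlarged value) so that at the top edge $z = E + \ii \eta_{\mathrm{M}}$, the asymptotics~\eqref{le limit of omega} together with the subordination equations force both $(\omega_1(z), \omega_2(z))$ and $(\omega_1^{AB}(z), \omega_2^{AB}(z))$ to be within $\delta_0/2$ of each other, uniformly in $E \in \mathcal{I}$; this places the top edge inside $\Omega$. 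On $\Omega$, $\im \omega_i^{AB} \ge 2k - \delta_0 \ge k$ and $|\omega_i^{AB}| \le M_0 + 1$, so $\omega_i^{AB}(z) \in K$ and the preceding bound gives $\|\widetilde r(z)\| \le 2C^*(\dL(\mu_A, \mu_\alpha) + \dL(\mu_B, \mu_\beta)) \le 2C^* b$. Corollary~\ref{corollary perturbation} then yields the strict improvement
\begin{align*}
|\omega_i^{AB}(z) - \omega_i(z)| \le 2S\|\widetilde r(z)\| \le 4 S C^* b,
\end{align*}
which is at most $\delta_0/2$ once $b$ is chosen small. Continuity propagates this strict inequality to a neighborhood of $z$, so $\Omega$ is open; connectedness of $\mathcal{S}_{\mathcal{I}}(0, \eta_{\mathrm{M}})$ forces $\Omega = \mathcal{S}_{\mathcal{I}}(0, \eta_{\mathrm{M}})$.

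\emph{From $\omega$ to $m$; main obstacle.} With $|\omega_i^{AB}(z) - \omega_i(z)| \le 4SC^*(\dL(\mu_A, \mu_\alpha) + \dL(\mu_B, \mu_\beta))$ established on the strip, the identity $F_{\mu_1 \boxplus \mu_2}(z) = F_{\mu_1}(\omega_2(z))$ and the telescoping split
\begin{align*}
|F_{\mu_A \boxplus \mu_B}(z) - F_{\mu_\alpha \boxplus \mu_\beta}(z)| \le |(F_{\mu_A} - F_{\mu_\alpha})(\omega_2^{AB}(z))| + |F_{\mu_\alpha}(\omega_2^{AB}(z)) - F_{\mu_\alpha}(\omega_2(z))|
\end{align*}
yields $|F_{\mu_A \boxplus \mu_B}(z) - F_{\mu_\alpha \boxplus \mu_\beta}(z)| \lesssim \dL(\mu_A, \mu_\alpha) + \dL(\mu_B, \mu_\beta)$: the first term is controlled by the $F$-difference bound, the second by $\sup_K|F_{\mu_\alpha}'|$ (Cauchy's estimate on $K$) times the bootstrap bound on $|\omega_2^{AB} - \omega_2|$. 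Inverting via $m = -1/F$, and noting that $|F_{\mu_\alpha \boxplus \mu_\beta}(z)| \ge c > 0$ on the compact strip (non-vanishing by the very definition of $\mathcal{U}_{\mu_\alpha \boxplus \mu_\beta}$ together with continuity and compactness) so that also $|F_{\mu_A \boxplus \mu_B}(z)| \ge c/2$ for $b$ small, one obtains~\eqref{le ksv statment}. The main obstacle is the \emph{seeding} of the bootstrap---the non-emptiness of $\Omega$; this is resolved by freely enlarging $\eta_{\mathrm{M}}$ so that~\eqref{le limit of omega} directly provides closeness of the two pairs of subordination functions at the top edge.
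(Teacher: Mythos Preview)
Your route reverses the paper's: you insert the perturbed pair $(\omega_1^{AB},\omega_2^{AB})$ into the reference system $\PP_{\mu_\alpha,\mu_\beta}$ and then run a bootstrap in $\eta$, whereas the paper (Lemma~\ref{cor.080601}) inserts the reference pair $(\omega_\alpha,\omega_\beta)$ into the perturbed system $\PP_{\mu_A,\mu_B}$, bounds the residual there---where all needed estimates are already supplied by Theorem~\ref{thm stability}---applies Newton--Kantorovich, and identifies the resulting nearby solution with $(\omega_A,\omega_B)$ via the pointwise uniqueness of solutions in $\C^+$ asserted by Proposition~\ref{le prop 1}. No continuity argument is required; the paper even flags this in the Remark following Lemma~\ref{cor.080601}.

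The seeding of your bootstrap has a genuine gap. You appeal to~\eqref{le limit of omega} at an enlarged height $\eta_{\mathrm{M}}'$ to force $|\omega_i^{AB}(z)-\omega_i(z)|\le\delta_0/2$ there, but~\eqref{le limit of omega} is a per-measure limit whose rate is not uniform over $\mu_A,\mu_B$ in a L\'evy ball. Concretely, take $\mu_A=\mu_\alpha$ and $\mu_B=(1-\epsilon)\mu_\beta+\epsilon\,\delta_R$ with $R$ huge, so that $\dL(\mu_B,\mu_\beta)\le\epsilon$. For $1\ll\im w\ll R$ one has $F_{\mu_B}(w)-w\approx\epsilon w$, and the subordination equations then give $\omega_2^{AB}(z)-\omega_2(z)\approx\epsilon z$ on that range. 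Thus at any prescribed height $\eta_{\mathrm{M}}'$, choosing $R\gg\eta_{\mathrm{M}}'$ yields $|\omega_2^{AB}-\omega_2|\approx\epsilon\,\eta_{\mathrm{M}}'$: enlarging $\eta_{\mathrm{M}}'$ makes this worse, not better. A quantitative seed of the form $|\omega_i^{AB}(z)-\omega_i(z)|\le C(\eta)\big(\dL(\mu_A,\mu_\alpha)+\dL(\mu_B,\mu_\beta)\big)$ at a single height would suffice, but establishing it already needs the paper's maneuver (evaluate the residual at the \emph{reference} pair and use global uniqueness), after which the bootstrap is superfluous.
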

Note that $\max_{z\in\mathcal{S}_{\mathcal{I}}(0,\eta_{\mathrm{M}})} |m_{\mu_\alpha\boxplus\mu_\beta}(z)|<\infty$ by compactness of $\mathcal{I}$ and analyticity of $m_{\mu_\alpha\boxplus\mu_\beta}$ in $\mathcal{I}$. Thus the Stieltjes-Perron inversion formula directly implies that $(\mu_{A}\boxplus\mu_{B})^{\mathrm{ac}}$ has a density, $f_{\mu_A\boxplus\mu_B}$, inside $\mathcal{I}$ and that
\begin{align}\label{le uniform pointwise}
 \max_{x\in\mathcal{I}}|f_{\mu_A\boxplus\mu_B}(x)-f_{\mu_\alpha\boxplus\mu_\beta}(x)|\le Z\left(\dL(\mu_A,\mu_\alpha)+\dL(\mu_B,\mu_\beta)\right)\,,
\end{align}
provided that~\eqref{new condition} holds, where $f_{\mu_\alpha\boxplus\mu_\beta}$ is the density of $(\mu_\alpha\boxplus\mu_\beta)^{\mathrm{ac}}$.

\begin{remark}
The estimate~\eqref{le ksv statment} was recently given by Kargin~\cite{Kargin2013} under the assumption that~\eqref{le thm lower bound on omega equation} and~\eqref{le thm linear stability equation} hold for all $z\in\mathcal{S}_{\mathcal{I}}(0,\eta_\mathrm{M})$, \ie under the assumption that the conclusions of our Theorem~\ref{thm stability} hold. It is quite surprising that one can directly set $\im z=0$ in~\eqref{le ksv statment}. As first noted by Kargin, this is due to the regularizing effect of $\omega_\alpha$, $\omega_\beta$ and to the global uniqueness of solutions to~\eqref{le definiting equations} for arbitrary probability measures.
\end{remark}

\subsubsection{Application to random matrix theory}\label{Application to random matrix theory}
We now turn to an application of Theorem~\ref{thm stability} in random matrix theory. Let $A\equiv A^{(N)}$ and $B\equiv B^{(N)}$ be two sequences of $N\times N$ deterministic real diagonal matrices, whose empirical spectral distributions are denoted by~$\mu_A$ and~$\mu_B$ respectively, \ie
\begin{align}\label{le empirical measures of A and B}
 \mu_{A}\deq\frac{1}{N}\sum_{i=1}^N\delta_{a_i}\,,\qquad\qquad\mu_{B}\deq\frac{1}{N}\sum_{i=1}^N\delta_{b_i}\,,
\end{align}
where $A=\mathrm{diag}(a_i)$, $B=\mathrm{diag}(b_i)$. The matrices $A$ and $B$ depend on $N$, but we omit this fact from the notation. Let $\omega_A$ and $\omega_B$ denote the subordination functions associated with~$\mu_A$ and~$\mu_B$ by Proposition~\ref{le prop 1}.

 We assume that there are deterministic probability measures $\mu_\alpha$ and $\mu_\beta$ on $\R$, neither of them being a single point mass, such that the empirical spectral distributions $
 \mu_A, \mu_B$ converge weakly to $\mu_\alpha$, $ \mu_\beta$, as $N\to \infty$. More precisely, we assume that 
\begin{align}\label{le assumptions convergence empirical measures}
\dL(\mu_A,\mu_\alpha)+\dL(\mu_B,\mu_\beta)\to 0\,,
\end{align}
as $N\to\infty$. Let $\omega_\alpha$, $\omega_\beta$ denote the subordination functions associated with  $\mu_\alpha$ and $\mu_\beta$.

Let $U$ be an independent $N\times N$ Haar distributed unitary matrix (in short {\it Haar unitary}) and consider the random matrix
\begin{eqnarray}\label{le our H}
H\equiv H^{(N)}\deq A+UBU^*\,.
\end{eqnarray}
We introduce the {\it Green function}, $G_H$, of $H$ and its normalized trace, $m_H$, by setting
\begin{align}\label{le true green function}
 G_H(z)\deq \frac{1}{H-z}\,,\qquad\qquad m_H(z)\deq\ntr G_H(z)\,,
\end{align}
$z\in\C^+$. We refer to $z$ as the {\it spectral parameter} and we often write $z=E+\ii\eta$, $E\in\R$, $\eta> 0$. Recall the definition of $S_{\mathcal{I}}(a,b)$ in~\eqref{le domain S}. We have the following {\it local law} for $m_H$.

\begin{theorem} \label{thm041801}
Let $\mu_\alpha$ and $\mu_\beta$ be two compactly supported probability measures on $\R$, and assume that neither is only supported at one point and that at least one of them is supported at more than two points. Let $\mathcal{I}\subset\mathcal{B}_{\mu_\alpha\boxplus\mu_\beta}$ be a compact non-empty interval and fix some $0<\eta_{\mathrm{M}}<\infty$. Assume that the sequences of matrices $A$ and $B$ in~\eqref{le our H} are such that their empirical eigenvalue distributions $\mu_A$ and $\mu_B$ satisfy~\eqref{le assumptions convergence empirical measures}. Fix any small $\gamma>0$ and set $\eta_\mathrm{m}\deq N^{-2/3+\gamma}$.

 Then we have the following uniform estimate: For any (small) $\epsilon>0$ and any (large) $D$,
\begin{align}\label{le local convergence of m}
\mathbb{P}\,\bigg(\bigcup_{z\in\mathcal{S}_{\mathcal{I}}(\eta_\mathrm{m},\eta_{\mathrm{M}})}\bigg\{\big|m_H(z)-m_{\mu_A\boxplus \mu_B}(z)\big|> \frac{N^{\epsilon}}{N\eta^{3/2}}\bigg\}\bigg)\le \frac{1}{N^D}\,,
\end{align}
holds for $N\ge N_0$, with some $N_0$ sufficiently large, where we write $z=E+\ii\eta$.

\end{theorem}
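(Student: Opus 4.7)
My plan is to exhibit approximate subordination functions $\widetilde\omega_A(z),\widetilde\omega_B(z)$, built from the random matrix $H=A+UBU^*$, that almost satisfy the system~\eqref{le H system}, and then invoke the unconditional local stability of Corollary~\ref{corollary perturbation} to conclude that they are close to the true subordination functions $\omega_A(z),\omega_B(z)$ associated with $\mu_A,\mu_B$. A natural choice, dictated by the algebraic identity $\ntr(AG_H)+\ntr((UBU^*)G_H)=1+zm_H(z)$, is
\begin{align*}
\widetilde\omega_B(z)\deq z-\frac{\ntr\! AG_H(z)}{m_H(z)}\,,\qquad \widetilde\omega_A(z)\deq z-\frac{\ntr\! (UBU^*)G_H(z)}{m_H(z)}\,,
\end{align*}
which automatically satisfy $m_H(z)(\widetilde\omega_A(z)+\widetilde\omega_B(z)-z)+1=0$. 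A short manipulation then expresses $\PP_{\mu_A,\mu_B}(\widetilde\omega_A(z),\widetilde\omega_B(z),z)=\widetilde r(z)$, where each component of $\widetilde r(z)$ is a normalized trace of a product involving $G_H$, $A$ and $UBU^*$ that has mean zero if $A$ and $UBU^*$ are freely independent.

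The central probabilistic estimate is the bound $\|\widetilde r(z)\|\lesssim N^{\epsilon}/(N\eta^{3/2})$ with overwhelming probability. This I would obtain from Gromov--Milman concentration on the Haar measure of the unitary (or orthogonal) group, applied to each entry of $\widetilde r$ viewed as a function of $U$. Naive Lipschitz constants for such traces grow as $\eta^{-3/2}$ inside the bulk, leading to a fluctuation of order $\eta^{-3/2}/N$. The key technical input, enabled by Theorem~\ref{thm stability}~(i), is that whenever a resolvent identity permits replacing $G_H$ by $(\widetilde\omega_B-A)^{-1}$ or $(\widetilde\omega_A-B)^{-1}$, the small parameter $\eta$ is upgraded to $\im\widetilde\omega_A$ or $\im\widetilde\omega_B$, which are bounded below by positive constants uniformly in $z\in\mathcal{S}_{\mathcal{I}}(\eta_\mathrm{m},\eta_\mathrm{M})$. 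Only one resolvent has to keep its original $1/\eta$ weight, and a Ward-type identity $\sum_j|G_{H,ij}|^2=(\im G_H)_{ii}/\eta$ converts it to $1/\sqrt{\eta}$, yielding the stated rate.

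The argument is circular: the improved Lipschitz estimate in the previous step requires an a priori lower bound on $\im\widetilde\omega_A,\im\widetilde\omega_B$, while this lower bound itself follows only once the desired conclusion is known. I would break this circularity through a continuity-in-$\eta$ bootstrap, similarly to~\cite{ESY}. Fix $E\in\mathcal{I}$ and discretize $\eta\in[\eta_\mathrm{m},\eta_\mathrm{M}]$ by $\eta_\mathrm{M}=\eta_0>\eta_1>\cdots>\eta_K=\eta_\mathrm{m}$ with step size $\Delta\eta\sim N^{-10}$, setting $z_k=E+\ii\eta_k$. At $k=0$ the bound is immediate from $\|G_H\|\le\eta_\mathrm{M}^{-1}$ and trivial mesoscopic concentration. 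Inductively, the estimate $|m_H(z_{k-1})-m_{\mu_A\boxplus\mu_B}(z_{k-1})|\le N^\epsilon/(N\eta_{k-1}^{3/2})$, together with the $\eta^{-2}$-Lipschitz continuity of Stieltjes transforms in $z$, transfers to a bound of the form $|\widetilde\omega_A(z_k)-\omega_A(z_k)|\le\delta_0$ and $|\widetilde\omega_B(z_k)-\omega_B(z_k)|\le\delta_0$ at the next grid point, verifying the hypothesis~\eqref{le huhu} of Corollary~\ref{corollary perturbation}. The concentration step then gives $\|\widetilde r(z_k)\|\le N^\epsilon/(N\eta_k^{3/2})$, and Corollary~\ref{corollary perturbation} closes the induction. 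A standard union bound over an $N^{-C}$-net in $\mathcal{S}_{\mathcal{I}}(\eta_\mathrm{m},\eta_\mathrm{M})$, promoted to all $z$ via $\eta^{-2}$-Lipschitz continuity, upgrades the pointwise bound to the uniform statement~\eqref{le local convergence of m}.

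The main obstacle is the fine bookkeeping in the concentration step: every stray power of $\eta^{-1}$ must be systematically replaced by a power of $(\im\widetilde\omega_A)^{-1}$ or $(\im\widetilde\omega_B)^{-1}$ using Theorem~\ref{thm stability}~(i), and the Ward-type identity must be positioned at the single unavoidable outer resolvent. It is precisely this accounting, underwritten by the unconditional positivity of $\im\omega_A,\im\omega_B$ proved in Theorem~\ref{thm stability}, that unlocks the scale $\eta\gg N^{-2/3}$ rather than Kargin's earlier $\eta\gg N^{-1/7}$.
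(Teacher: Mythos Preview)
Your overall architecture---approximate subordination functions, local stability from Corollary~\ref{corollary perturbation}, and a continuity bootstrap in $\eta$---matches the paper. However, two concrete choices differ from the paper and one of them hides a real gap.

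First, you work with the \emph{random} quantities $\widetilde\omega_A=z-\ntr(UBU^*G_H)/m_H$, $\widetilde\omega_B=z-\ntr(AG_H)/m_H$, whereas the paper uses the \emph{averaged} versions $\omega_A^c=z-\E f_{\widetilde A}/\E m_H$, $\omega_B^c=z-\E f_{\widetilde B}/\E m_H$ and splits $m_H-m_{A\boxplus B}=(m_H-\E m_H)+(\E m_H-m_{A\boxplus B})$. Your random $\widetilde\omega$'s are nonlinear in $G_H$ (they involve a ratio), so computing their Lipschitz constant in $U$ is awkward, and your error $\widetilde r$ does not literally have mean zero for finite $N$. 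The paper's averaged version allows the error $r^c$ to be written exactly as covariances via the Pastur--Vasilchuk identities $\E[G_{\widetilde H}\otimes\widetilde AG_{\widetilde H}]=\E[\widetilde AG_{\widetilde H}\otimes G_{\widetilde H}]$, so its smallness reduces cleanly to concentration of \emph{linear} tracial functionals $f_{VQV^*}=\ntr QG_{\mathcal H}$.

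Second, and more seriously, your mechanism for the improved Lipschitz bound does not work as stated. Gromov--Milman needs a \emph{deterministic} Lipschitz constant. The derivative of $\ntr QG_{\mathcal H}$ in a unitary direction $X$ is controlled by $\sqrt{\ntr|G_{\mathcal H}|^4/N}$; the trivial bound $\ntr|G_{\mathcal H}|^4\le\eta^{-4}$ yields $\mathcal L\lesssim (N\eta^4)^{-1/2}$ and only reaches $\eta\gg N^{-1/2}$. The sharper bound $\ntr|G_{\mathcal H}|^4\le\eta^{-3}\Im m_H$ (your ``Ward identity'') is the right idea, but $\Im m_H\lesssim 1$ is a \emph{probabilistic} statement and cannot be inserted directly into the Lipschitz estimate. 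Your proposal to ``replace $G_H$ by $(\widetilde\omega_B-A)^{-1}$'' does not help here: $\widetilde\omega_B$ is itself random, so this substitution produces no deterministic bound on the derivative. The paper resolves this by \emph{regularizing} the functional: it multiplies $\ntr QG_{\mathcal H}(\widehat z)$ by a product of smooth cutoffs $\prod_{l}\widehat\chi(\Im m_H(E+\ii2^l\widehat\eta))$ at all dyadic scales above $\widehat\eta$. The regularized function equals the original on a high-probability event, yet its support lies in $\{\Im m_H(E+\ii 2^l\widehat\eta)\le 2N^\varepsilon\ \forall l\}$, on which the Lipschitz constant is deterministically $O((N^{4\varepsilon}/(N\widehat\eta^3))^{1/2})$. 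This ``localized'' Gromov--Milman step (Proposition~\ref{prop041901}) is the key device that gets you from $N^{-1/2}$ down to $N^{-2/3}$, and it is missing from your outline.
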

Using standard techniques of random matrix theory, we can translate the estimate~\eqref{le local convergence of m} on the Green function into an estimate on the empirical spectral distribution of the matrix~$H$. Let $\lambda_1,\ldots,\lambda_N$ denote the ordered eigenvalues of $H$ and denote by
\begin{align}\label{le empirical distribution lambdas}
 \mu_H\deq \frac{1}{N}\sum_{i=1}^N\delta_{\lambda_i}
\end{align}
its empirical spectral distribution. Our result on the rate of convergence of~$\mu_H$ is as follows.	

\begin{corollary}\label{cor081801}
Let $\mathcal{I}\subset\mathcal{B}_{\mu_\alpha\boxplus\mu_\beta}$ be a compact non-empty interval. Then, for any $E_1<E_2$ in $\mathcal{I}$, we have the following estimate. For any (small) $\epsilon>0$ and any (large) $D$ we have
\begin{align}
\P\,\left(\Big|\mu_H([E_1,E_2))-\mu_{A\boxplus B}([E_1,E_2))  \Big|> \frac{N^\epsilon}{N^{2/3}}\right)\le N^{-D}\,,
\end{align}
for $N\ge N_0$, with some $N_0$ sufficiently large.
\end{corollary}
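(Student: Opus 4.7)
The plan is to derive the bound on the counting function from the Stieltjes-transform estimate of Theorem~\ref{thm041801} by a Helffer--Sj{\"o}strand argument. I would work on the high-probability event in~\eqref{le local convergence of m}, where $|m_H-m_{\mu_A\boxplus\mu_B}|(z)\le N^{\epsilon'}/(N\eta^{3/2})$ holds uniformly on $\mathcal{S}_{\mathcal{I}}(\eta_\mathrm{m},\eta_\mathrm{M})$; both $\epsilon'>0$ and $\gamma>0$ (where $\eta_\mathrm{m}=N^{-2/3+\gamma}$) will be chosen small relative to the target $\epsilon$ of the corollary.

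First I would sandwich $\mathbf{1}_{[E_1,E_2)}$ between smooth cutoffs $\chi_{+}\ge \mathbf{1}_{[E_1,E_2)}\ge\chi_{-}$, each equal to $\mathbf{1}_{[E_1,E_2)}$ outside transition regions of width $\ell\sim\eta_\mathrm{m}$ around $E_1$ and $E_2$. This reduces the task to bounding $|\int\chi_\pm\,\dd(\mu_H-\mu_{A\boxplus B})|$ together with the $\mu_H$- and $\mu_{A\boxplus B}$-masses of $[E_i-\ell,E_i+\ell]$. The latter are of order $\ell$: for $\mu_{A\boxplus B}$ this uses the bounded density in the regular bulk (Proposition~\ref{le real analytic prop}), and for $\mu_H$ one uses $\mu_H([E-\ell,E+\ell])\le C\ell\,\im m_H(E+\ii\ell)$ together with $\im m_H(E+\ii\ell)\le\im m_{A\boxplus B}(E+\ii\ell)+|m_H-m_{A\boxplus B}|(E+\ii\ell)=O(1)$, where the last equality uses Theorem~\ref{thm041801} at $\eta=\ell\ge\eta_\mathrm{m}$. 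Then the Helffer--Sj{\"o}strand formula writes $\int\chi_\pm\,\dd(\mu_H-\mu_{A\boxplus B})$ as a two-dimensional integral of $\partial_{\bar z}\widetilde\chi_\pm(z)\cdot(m_H-m_{A\boxplus B})(z)$ over $\C^+$, with $\widetilde\chi_\pm$ an almost-analytic extension of $\chi_\pm$. I would split this integral at $y=\eta_\mathrm{m}$: on $y\ge\eta_\mathrm{m}$ apply Theorem~\ref{thm041801} directly, which bounds the dominant term of the integrand by a multiple of $y\cdot |\chi''_\pm(x)|\cdot N^{\epsilon'}/(Ny^{3/2})$, while on $y<\eta_\mathrm{m}$ I use the monotonicity of $y\mapsto y\,\im m_\mu(x+\ii y)$ (which follows from the Poisson-kernel representation of $\im m_\mu$) to transport the bound from $y=\eta_\mathrm{m}$ downward, giving $y\,|\im(m_H-m_{A\boxplus B})|(x+\ii y)\le \eta_\mathrm{m}\,|\im(m_H-m_{A\boxplus B})|(x+\ii\eta_\mathrm{m})=O(\eta_\mathrm{m})$.

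Balancing the smoothing error against the two Helffer--Sj{\"o}strand contributions with $\ell\sim\eta_\mathrm{m}$ then yields the claimed bound $N^{\epsilon}/N^{2/3}$ once $\gamma$ is chosen small enough in terms of $\epsilon$. The main subtlety I anticipate is arranging the parameters $\ell$, $\gamma$, and $\epsilon'$ so that all three sources of error (smoothing, the $y\ge\eta_\mathrm{m}$ contribution, and the $y<\eta_\mathrm{m}$ contribution) stay below $N^{\epsilon}/N^{2/3}$ simultaneously. Two features enter critically: the $y^{-3/2}$ decay in the Stieltjes bound ensures the $y\ge\eta_\mathrm{m}$ contribution involves a factor $\int_{\eta_\mathrm{m}}^{1} y^{-1/2}\,\dd y=O(1)$ that does not contribute a diverging power of $N$, and the monotonicity of $y\,\im m_\mu$ ensures the $y<\eta_\mathrm{m}$ region contributes only at the scale $\eta_\mathrm{m}$, matching the smoothing loss.
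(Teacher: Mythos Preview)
Your proposal is correct and matches what the paper does: the paper omits the detailed proof, stating only that Corollary~\ref{cor081801} ``follows from a standard application of the Helffer--Sj\"{o}strand functional calculus'' and referring to Section~7.1 of~\cite{EKYY13} for a similar argument. Your outline---sandwiching by smooth cutoffs on scale $\ell\sim\eta_{\mathrm m}$, the almost-analytic extension, the split at $y=\eta_{\mathrm m}$, and the use of the monotonicity of $y\mapsto y\,\im m_\mu(x+\ii y)$ for the small-$y$ region---is precisely that standard argument, and your parameter balance is correct.
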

We omit the proof of Corollary~\ref{cor081801} from Theorem~\ref{thm041801}, but mention that the normalized trace $m_H$ of the Green function and the empirical spectral distribution $\mu_H$ of $H$ are linked~by
\begin{align*}
 m_H(z)=\ntr G_H(z)=\frac{1}{N}\sum_{i=1}^N\frac{1}{\lambda_i-z}=\int_\R\frac{\dd\mu_H(x)}{x-z}\,,\qquad \qquad z\in\C^+\,.
\end{align*}
Corollary~\ref{cor081801} then follows from a standard application of the Helffer-Sj\"{o}strand functional calculus; see~\eg Section~7.1 of~\cite{EKYY13} for a similar argument.

 Note that assumption~\eqref{le assumptions convergence empirical measures} does not exclude that the matrix $H$ has outliers in the large~$N$ limit. In fact, the model $H=A+UBU^*$ shows a rich phenomenology when, say,~$A$ has a finite number of large spikes; we refer to the recent works in~\cite{BBCF,BN11,C13,Kargin}. 

\begin{remark} Our results in Theorem~\ref{thm041801} and~Corollary~\ref{cor081801} are stated for $U$ Haar distributed on the unitary group $U(N)$. However, they also hold true (with the same proofs) when $U$ is Haar distributed on the orthogonal group~$O(N)$.
\end{remark}

\begin{remark}\label{remark:companion}
 In~\cite{BES15-2}, we derive, with a different approach, the estimate (with the notation of~\eqref{le local convergence of m})
 \begin{align}\label{le other local convergence of m}
\mathbb{P}\,\bigg(\bigcup_{z\in\mathcal{S}_{\mathcal{I}}(\eta_\mathrm{m},\eta_{\mathrm{M}})}\bigg\{\big|m_H(z)-m_{\mu_A\boxplus \mu_B}(z)\big|> \frac{N^{\epsilon}}{\sqrt{N\eta}}\bigg\}\bigg)\le \frac{1}{N^D}\,,
\end{align}
for $N\ge N_0$, with some $N_0$ sufficiently large, and with $\eta_{\mathrm{m}}=N^{-1+\gamma}$. In fact, we obtain estimates for individual matrix elements of the resolvent $G_{H}$ as well. Comparing with~\eqref{le local convergence of m}, we see that we can choose $\eta$ in~\eqref{le other local convergence of m} almost as small as~$N^{-1}$ at the price of losing a factor~$\sqrt{N}$.  The stability and perturbation analysis in~\cite{BES15-2} rely on the optimal results in Theorem~\ref{thm stability} and Theorem~\ref{le theorem continuity} as well as in Sections~\ref{s. stability}-\ref{section theorem continuity} of the present paper.

\end{remark}

\subsection{Organization of the paper}
In Section~\ref{s. stability}, we consider the stability of the system~\eqref{le definiting equations} when at least one of the measures $\mu_1$ and $\mu_2$ is supported at more than two points and we give the proof of Theorem~\ref{thm stability}. In Section~\ref{section perturbation}, we consider perturbations of the system~\eqref{le definiting equations} and derive results that will be used in the proof of Theorem~\ref{thm041801} and also in~\cite{BES15-2}. In Section~\ref{section theorem continuity}, we prove Theorem~\ref{le theorem continuity}. In Section~\ref{le section proof of theorem matrix} we consider the random matrix setup and prove Theorem~\ref{thm041801}. In the final Section~\ref{le final section}, we separately settle the special case when both~$\mu_1$ and~$\mu_2$ are combinations of two point masses and give the results analogous to Theorem~\ref{thm stability}, Theorem~\ref{le theorem continuity} and Theorem~\ref{thm041801} for that case.

\section{Stability of the system~\eqref{le H system} and proof of Theorem~\ref{thm stability}}\label{s. stability}
In this section, we discuss stability properties of the system~\eqref{le H system}, with $\mu_1$, $\mu_2$ two compactly supported probability measures satisfying the assumptions of Theorem~\ref{thm stability}. 

\begin{lemma}\label{le lemma stability constants}
Let $\mu_1$, $\mu_2$ be two probability measures on $\R$ neither of them being supported at a single point. Then there is, for any compact set $\mathcal{K}\subset\C^+$, a strictly positive constant $0<\sigma(\mu_1,\mathcal{K})<1$ such that the reciprocal Stieltjes transform~$F_{\mu_1}$ (see~\eqref{le F definition}) satisfies
\begin{align}\label{le first stability constant}
\im z\le (1-\sigma(\mu_1,\mathcal{K}))\,\im F_{\mu_1}(z)\,,\qquad\qquad \forall z\in\mathcal{K}\,.
\end{align}
 Similarly, there is $0<\sigma(\mu_2,\mathcal{K})<1$ such that ~\eqref{le first stability constant} holds with $\mu_2$ and $F_{\mu_2}$, respectively. 

 Assume in addition that $\mu_1$ is supported at more than two points. Then there is, for any compact set $\mathcal{K}\subset\C^+$, a strictly positive constant $0<\widetilde\sigma(\mu_1,\mathcal{K})<1$ such that
\begin{align}\label{le second stability constant}
|F_{\mu_1}'(z)-1|\le(1- \widetilde\sigma(\mu_1,\mathcal{K}))\,\frac{\im F_{\mu_1}(z)-\im z}{\im z}\,,\qquad\qquad \forall z\in\mathcal{K}\,,
\end{align}
where $F'_{\mu_1}(z)\equiv\partial_z F_{\mu_1}(z)$.
\end{lemma}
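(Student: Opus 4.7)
The plan is to derive both inequalities from elementary integral representations of $F_\mu$, combined with strict pointwise inequalities that are then promoted to uniform estimates on any compact $\mathcal{K} \subset \C^+$ by continuity.

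For \eqref{le first stability constant}, I would start from $F_\mu = -1/m_\mu$, which gives $\im F_\mu(z) = \im m_\mu(z)/|m_\mu(z)|^2$. The Cauchy--Schwarz inequality applied to $m_\mu(z) = \int (x-z)^{-1}\,d\mu(x)$ yields
\[
|m_\mu(z)|^2 \le \int \frac{d\mu(x)}{|x-z|^2} = \frac{\im m_\mu(z)}{\im z},
\]
with equality iff $(x-z)^{-1}$ is constant $\mu$-a.e., i.e.\ iff $\mu$ is a point mass. Combined, these imply $\im z < \im F_\mu(z)$ pointwise on $\C^+$ under the hypothesis on $\mu_j$, and then the continuous function $z \mapsto 1 - \im z/\im F_\mu(z)$ attains a strictly positive minimum on $\mathcal{K}$; call it $\sigma(\mu_j, \mathcal{K})$. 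This gives \eqref{le first stability constant}, and the same argument applies to $\mu_2$.

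For \eqref{le second stability constant}, I would invoke the Nevanlinna representation. Since $F_\mu(z) - z$ is a Pick function (by \eqref{le first stability constant}) satisfying $(F_\mu(z) - z)/(\ii\eta) \to 0$ as $\eta \to \infty$ by \eqref{le F behaviour at infinity} and with bounded limit at infinity (as $\mu$ is compactly supported), there exist $a_\mu \in \R$ and a finite positive Borel measure $\nu_\mu$ on $\R$ with
\[
F_\mu(z) = z + a_\mu + \int_\R \frac{d\nu_\mu(x)}{x - z}, \qquad z \in \C^+.
\]
Differentiation and taking imaginary parts give
\[
F_\mu'(z) - 1 = \int \frac{d\nu_\mu(x)}{(x-z)^2}, \qquad \im F_\mu(z) - \im z = \im z\,\int \frac{d\nu_\mu(x)}{|x-z|^2},
\]
and pushing the absolute value inside the first integral yields \eqref{le second stability constant} with $1$ in place of $1 - \widetilde\sigma$; strictness remains to be established.

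The key remaining step is to link the hypothesis that $\mu_1$ is supported at more than two points to $\nu_{\mu_1}$ not being a single point mass. Computing directly for $\mu = p\delta_a + q\delta_b$ (with $a \ne b$, $p + q = 1$) yields $F_\mu(z) = z - (pa+qb) + pq(a-b)^2/(pb+qa - z)$, so $\nu_\mu$ is a single point mass of total mass $pq(a-b)^2$; conversely, if $\nu_\mu$ were concentrated at one point, then $F_\mu$ would be rational of the same form, and $m_\mu = -1/F_\mu$ would then be a rational function with at most two real poles, forcing $\mu$ to be supported at no more than two points. Under our hypothesis on $\mu_1$ this possibility is excluded, so $\nu_{\mu_1}$ has at least two distinct points in its support and for each $z \in \C^+$ the triangle inequality
\[
\left|\int \frac{d\nu_{\mu_1}(x)}{(x-z)^2}\right| \le \int \frac{d\nu_{\mu_1}(x)}{|x-z|^2}
\]
is strict, since equality would force $\arg(x-z)$ to be constant on $\supp \nu_{\mu_1}$. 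Continuity in $z$ and compactness of $\mathcal{K}$ then produce the uniform constant $\widetilde\sigma(\mu_1, \mathcal{K}) \in (0,1)$. The main obstacle is identifying the equality case of this integral inequality and carefully translating the support assumption on $\mu_1$ into a statement about $\nu_{\mu_1}$; the remaining steps reduce to standard Nevanlinna theory and routine continuity arguments.
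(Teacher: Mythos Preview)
Your proposal is correct and follows essentially the same route as the paper: both arguments reduce to identifying the equality cases in Cauchy--Schwarz/triangle-type integral inequalities (point mass for~\eqref{le first stability constant}, at most two point masses for~\eqref{le second stability constant}) and then invoke continuity on the compact set~$\mathcal{K}$. One minor difference worth noting: for~\eqref{le second stability constant} you use the Nevanlinna form $F_\mu(z)=z+a_\mu+\int(x-z)^{-1}\,d\nu_\mu$ with $\nu_\mu$ finite, which relies on a compact-support (or finite-variance) assumption on $\mu$ that the lemma as stated does not make, whereas the paper uses the general representation $F_\mu(z)=z+a+\int\frac{1+zx}{x-z}\,d\rho(x)$ (valid for any Pick function with the correct asymptotics) and obtains the same integral comparison with the harmless weight $(1+x^2)$ absorbed into the measure.
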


\begin{proof}[Proof of Lemma~\ref{le lemma stability constants}]
Assuming by contradiction that inequality~\eqref{le first stability constant} saturates (with vanishing constant $\sigma(\mu_1,\mathcal{K})=0$, for some $z\in \mathcal{K}\subset \C^+$), we have $\im F_{\mu_1}(z)=\im z$ for some $z$, thus $F_{\mu_1}(z)=z-a$, $a\in\R$, \ie $\mu_1=\delta_a$. This shows~\eqref{le first stability constant}.

To establish~\eqref{le second stability constant}, we first note that the analytic functions $F_{\mu_j}\,:\,\C^+\rightarrow \C^+$, $j=1,2$, have the Nevanlinna representations
\begin{align}\label{le neva for F}
 F_{\mu_j}(z)=a_{F_{\mu_j}}+z+\int_\R\frac{1+zx}{x-z}\,\dd\rho_{F_{\mu_j}}(x)\,,\qquad\qquad j=1,2\,,\qquad z\in\C^+\,,
\end{align}
where $a_{F_{\mu_j}}\in\R$ and $\rho_{{\mu_{F_j}}}$ are finite Borel measures on $\R$. Note that the coefficients of $z$ on the right-hand side are determined by~\eqref{le F behaviour at infinity}. From~\eqref{le neva for F} we see that
\begin{align}
|F_{\mu_1}'(z)-1|= \left|\int_\R\frac{1+x^2}{(x-z)^2}\,\dd\rho_{F_{\mu_1}}(x) \right|\,,\qquad\qquad z\in \C^+\,,
\end{align}
as well as
\begin{align}
\frac{\im F_{\mu_1}(z)-\im z }{\im z}=\int_\R\frac{1+x^2}{|x-z|^2}\,\dd\rho_{F_{\mu_1}}(x) \,,\qquad\qquad z\in \C^+\,.
\end{align}
Hence, assuming by contradiction that inequality~\eqref{le second stability constant} saturates (with~$\widetilde\sigma(\mu_1,\mathcal{K})=0$, for some $z\in \mathcal{K})$, we must have
\begin{align}
 \int_\R\frac{1+x^2}{|x-z|^2}\,\dd\rho_{F_{\mu_1}}(x) =\left|\int_\R\frac{1+x^2}{(x-z)^2}\,\dd\rho_{F_{\mu_1}}(x) \right|\,,
\end{align}
for some $z\in \mathcal{K}$, implying that $\rho_{F_{\mu_1}}$ is either a single point mass or $\rho_{F_{\mu_1}}=0$. In the latter case, we have $F_{\mu_1}(z)=a_{\mu_1}+z $ and we conclude that $\mu_1$ must be single point measure, but this is excluded by assumption. Thus $\rho_{F_{\mu_1}}$ is a single point mass, \ie there is a constant $d_{\mu_1}\in\R$ such that $F_{\mu_1}(z)=a_{F_{\mu_1}}+z+(1+zd_{\mu_1}^{2})/(d_{\mu_1}-z)$, $z\in \mathcal{K}$. It follows that $\mu_1$ is a convex combination of two point measures yielding a contradiction. This shows~\eqref{le second stability constant}. 
\end{proof}

\subsection{Bounds on the subordination functions}
Let $\mu_1$, $\mu_2$ be as above and let $\omega_1(z)$, $\omega_2(z)$ be the associated subordination functions. Recall that we rewrite the defining equations~\eqref{le definiting equations} for $\omega_1$ and $\omega_2$ in the compact form $\PP_{\mu_1,\mu_2}(\omega_1,\omega_2,z)=0$ introduced in~\eqref{le H system}.

We first provide upper bounds on the subordination functions~$\omega_1(z)$, $\omega_2(z)$. Our proof relies on the assumption that $\mu_1$, $\mu_2$ are compactly supported, \ie that there is a constant $L<\infty$ such that
\begin{align}\label{le compact support assumption}
 \mathrm{supp}\,\mu_1\subset [-L,L]\,,\qquad \qquad\mathrm{supp}\,\mu_2\subset[-L,L]\,.
 \end{align}
Recall from Theorem~\ref{thm stability} that we fixed a compact non-empty interval $\mathcal{I}\subset\mathcal{B}_{\mu_1\boxplus\mu_2}$. Since the density~$f_{\mu_1\boxplus\mu_2}$ is real analytic inside the regular bulk by Proposition~\ref{le real analytic prop} and since $\mathcal{I}$ is compact, there exists a constant $\kappa_0>0$ such that
 \begin{align}\label{assumption lower bound on density}
 0<\kappa_0\le\min_{x\in \mathcal{I}}f_{\mu_{1}\boxplus\mu_{2}}(x)\,.
\end{align}
Fixing a constant $0<\eta_{\mathrm{M}}<\infty$, it further follows that there is a constant $M<\infty$ such that
\begin{align}\label{assumption bounded m}
 \max_{z\in\mathcal{S}_{\mathcal{I}}(0,\eta_{\mathrm{M}})} |m_{\mu_{1}\boxplus\mu_{2}}(z)|\le M\,.
\end{align}

\begin{lemma}\label{le lem upper bound on omega}
Let $\mu_1$, $\mu_2$ be two compactly supported probability measures on~$\R$ satisfying~\eqref{le compact support assumption}, for some $L<\infty$, and assume that both are supported at more than one point. Let $\mathcal{I}\subset \mathcal{B}_{\mu_1\boxplus\mu_2}$ be a compact non-empty interval.
Then there is a constant $K<\infty$ such that
\begin{align}\label{le lem upper bound on omega equation}
 \max_{z\in\mathcal{S}_{\mathcal{I}}(0,\eta_{\mathrm{M}})} |\omega_1(z)|\le\frac{K}{2} \,,\qquad  \max_{z\in\mathcal{S}_{\mathcal{I}}(0,\eta_{\mathrm{M}})} |\omega_2(z)|\le \frac{K}{2}\,.
\end{align}
The constant $K$ depends on the constant ~$\eta_{\mathrm{M}}$ and on the interval $\mathcal{I}$ as well as  on the measures $\mu_1$, $\mu_2$ through the constants $\kappa_0$ in~\eqref{assumption lower bound on density} and the constant $L$ in~\eqref{le compact support assumption}.
\end{lemma}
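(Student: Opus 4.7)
The plan is to reduce bounding $|\omega_1(z)|,|\omega_2(z)|$ to a single uniform lower bound on $|m_{\mu_1\boxplus\mu_2}(z)|$ on $\mathcal{S}_{\mathcal{I}}(0,\eta_{\mathrm{M}})$, and then to exploit the identity $F_{\mu_1}(\omega_2(z))=F_{\mu_2}(\omega_1(z))=F(z)=-1/m_{\mu_1\boxplus\mu_2}(z)$ from~\eqref{le kkv} together with the growth $F_{\mu_j}(w)\sim w$ at infinity, which is a direct consequence of the compact support assumption~\eqref{le compact support assumption}.

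The key step is to show that there is $c>0$ with $|m_{\mu_1\boxplus\mu_2}(z)|\ge c$ on $\mathcal{S}_{\mathcal{I}}(0,\eta_{\mathrm{M}})$. Two simple facts combine. For $z=E\in \mathcal{I}\subset\mathcal{B}_{\mu_1\boxplus\mu_2}$, Stieltjes--Perron and~\eqref{assumption lower bound on density} give $\im m_{\mu_1\boxplus\mu_2}(E)=\pi f_{\mu_1\boxplus\mu_2}(E)\ge \pi\kappa_0>0$, while for $z\in\mathbb{C}^+$ the standard identity $\im m_{\mu_1\boxplus\mu_2}(z)=\int \eta/|x-z|^2\,\mathrm{d}(\mu_1\boxplus\mu_2)(x)>0$ holds trivially. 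Hence $m_{\mu_1\boxplus\mu_2}$ has no zeros on the compact set $\mathcal{S}_{\mathcal{I}}(0,\eta_{\mathrm{M}})$. Proposition~\ref{prop extension} guarantees continuous extension of $m_{\mu_1\boxplus\mu_2}$ to this compact set, so compactness gives the required lower bound, i.e.\ $|F(z)|\le 1/c$.

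Next, I would use the compact support to translate this into an upper bound on the subordination functions. For any $w\in\mathbb{C}$ with $|w|\ge 2L$, the elementary estimate $|x-w|\ge |w|/2$ for all $x\in\supp\mu_j$ yields $|m_{\mu_j}(w)|\le 2/|w|$, hence $|F_{\mu_j}(w)|\ge |w|/2$. Combined with $F_{\mu_1}(\omega_2(z))=F(z)$: whenever $|\omega_2(z)|>2L$ one has $|\omega_2(z)|/2\le |F(z)|\le 1/c$, so in all cases $|\omega_2(z)|\le \max(2L,2/c)$, and the symmetric argument using $F_{\mu_2}(\omega_1(z))=F(z)$ gives the same bound for $|\omega_1|$. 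Choosing $K\deq 2\max(2L,2/c)$ completes the proof.

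The only delicate part is the uniform nonvanishing of $m_{\mu_1\boxplus\mu_2}$ on the compact set $\mathcal{S}_{\mathcal{I}}(0,\eta_{\mathrm{M}})$; once that is secured via the regular-bulk lower bound~\eqref{assumption lower bound on density} and the boundary continuity of Proposition~\ref{prop extension}, the remainder is a routine appeal to the asymptotics $F_{\mu_j}(w)=w+O(1)$ at infinity afforded by compact support.
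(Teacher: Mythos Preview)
Your proof is correct and follows essentially the same strategy as the paper: first obtain a uniform lower bound on $|m_{\mu_1\boxplus\mu_2}(z)|$ over $\mathcal{S}_{\mathcal{I}}(0,\eta_{\mathrm{M}})$, then use the subordination identity~\eqref{le kkv} together with the compact support~\eqref{le compact support assumption} to bound $|\omega_j(z)|$.

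The only difference lies in how the lower bound on $|m_{\mu_1\boxplus\mu_2}|$ is obtained. The paper argues directly via the Poisson-type integral
\[
\im m_{\mu_1\boxplus\mu_2}(z)\ge\int_{\mathcal{I}}\frac{\eta\,f_{\mu_1\boxplus\mu_2}(x)\,\dd x}{(x-E)^2+\eta^2}\ge \kappa_1>0\,,
\]
which yields an explicit constant depending only on $\kappa_0$, $|\mathcal{I}|$ and $\eta_{\mathrm{M}}$; this is what justifies the stated dependence of $K$ on precisely those quantities. Your route via continuous extension (Proposition~\ref{prop extension}) plus compactness is equally valid and perhaps cleaner, though the resulting constant is less explicit. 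Note that your use of Proposition~\ref{prop extension} is sound: since $\mathcal{I}\subset\mathcal{B}_{\mu_1\boxplus\mu_2}\subset\mathcal{U}_{\mu_1\boxplus\mu_2}$, the boundary values of $F_{\mu_1\boxplus\mu_2}$ on $\mathcal{I}$ are finite and nonzero, so $m_{\mu_1\boxplus\mu_2}=-1/F_{\mu_1\boxplus\mu_2}$ indeed extends continuously there. The second step, where you make explicit the bound $|F_{\mu_j}(w)|\ge|w|/2$ for $|w|\ge 2L$, is exactly what the paper's one-line ``$|\omega_2(z)|$ must be bounded from above'' unpacks to.
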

\begin{proof}
 We start by noticing that there is a constant $\kappa_1>0$ such that
 \begin{align}\label{le mimimi}
 \im m_{\mu_1\boxplus\mu_2}(z)=\int_\R\frac{\eta\,\dd( \mu_1\boxplus\mu_2)(x)}{(x-E)^2+\eta^2}\ge\int_\mathcal{I}\frac{\eta\, f_{\mu_1\boxplus\mu_2}(x)\dd x}{(x-E)^2+\eta^2}\ge \kappa_1\,,
\end{align}
uniformly in $z=E+\ii\eta\in\mathcal{S}_{\mathcal{I}}(0,\eta_{\mathrm{M}}) $, where we used~\eqref{assumption lower bound on density}. Thus by subordination we have
 \begin{align}\label{le 44}
  \min_{z\in\mathcal{S}_{\mathcal{I}}(0,\eta_{\mathrm{M}})}|m_{\mu_1\boxplus\mu_2}(z)|=\min_{z\in\mathcal{S}_{\mathcal{I}}(0,\eta_{\mathrm{M}})}\left|\int_\R\frac{\dd\mu_1(a)}{a-\omega_2(z)} \right|\ge \kappa_1\,,
 \end{align}
 since $m_{\mu_1\boxplus\mu_2}(z)=-1/F_{\mu_1\boxplus\mu_2}(z)$ by~\eqref{le kkv}.
 
On the other hand, $\mu_1$ is supported on the interval $[-L,L]$; see~\eqref{le compact support assumption}. Hence, using~\eqref{le 44}, $|\omega_2(z)|$ must be bounded from above on $\mathcal{S}_{\mathcal{I}}(0,\eta_{\mathrm{M}})$.  Interchanging the r\^{o}les of the indices $1$ and $2$, we also get that $|\omega_1(z)|$ is bounded from above on $\mathcal{S}_{\mathcal{I}}(0,\eta_{\mathrm{M}})$. \qedhere
\end{proof}

Having established upper bounds on the subordination functions, we show that their imaginary parts are uniformly bounded from below on the domain $\mathcal{S}_{\mathcal{I}}(0,\eta_{\mathrm{M}})$. The proof relies on inequality~\eqref{le first stability constant}.

\begin{lemma}\label{le lemma stability}
Let $\mu_1$, $\mu_2$ be two probability measures on~$\R$ satisfying~\eqref{le compact support assumption}, for some $L<\infty$, and assume that neither of them is only supported at a single point. Let $\mathcal{I}\subset \mathcal{B}_{\mu_1\boxplus\mu_2}$ be a compact non-empty interval. Then there is a strictly positive constant $k>0$ such that
\begin{align}\label{le stability}
  \min_{z\in \mathcal{S}_{\mathcal{I}}(0,\eta_{\mathrm{M}})}\im \omega_1(z)\ge 2k\,,\qquad\qquad \min_{z\in \mathcal{S}_{\mathcal{I}}(0,\eta_{\mathrm{M}})}\im \omega_2(z)\ge 2k\,.
\end{align}
\end{lemma}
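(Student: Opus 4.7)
The plan is a compactness argument by contradiction, combined with the lower-bound inequality~\eqref{le first stability constant} from Lemma~\ref{le lemma stability constants} and the positivity of $f_{\mu_1\boxplus\mu_2}$ in the regular bulk.

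Suppose the conclusion fails. Then there exists a sequence $z_n\in\mathcal{S}_{\mathcal{I}}(0,\eta_{\mathrm{M}})$ with $\min\{\im\omega_1(z_n),\im\omega_2(z_n)\}\to 0$. By Lemma~\ref{le lem upper bound on omega} the sequences $\omega_1(z_n)$, $\omega_2(z_n)$ are bounded, and by compactness of $\mathcal{S}_{\mathcal{I}}(0,\eta_{\mathrm{M}})$ we may pass to a subsequence along which $z_n\to z^\ast\in\mathcal{S}_{\mathcal{I}}(0,\eta_{\mathrm{M}})$ and $\omega_j(z_n)\to \omega_j^\ast$, $j=1,2$. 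By Proposition~\ref{prop extension} the $\omega_j$ extend continuously to $\R$, so $\omega_j^\ast=\omega_j(z^\ast)$; in particular $\omega_j^\ast\in\bar{\C^+}\cap\{|w|\le K/2\}$, and after relabeling we may assume $\im\omega_2^\ast=0$.

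I would then handle the two possibilities for $\omega_1^\ast$ separately. If $\im\omega_1^\ast>0$, pick a compact set $\mathcal{K}\subset\C^+$ containing $\omega_1^\ast$ in its interior, and apply~\eqref{le first stability constant} to $F_{\mu_2}$ with this~$\mathcal{K}$: there is $\sigma>0$ such that $\im\omega_1^\ast\le (1-\sigma)\im F_{\mu_2}(\omega_1^\ast)$. The subordination equation gives $\im F_{\mu_2}(\omega_1^\ast)=\im\omega_1^\ast+\im\omega_2^\ast-\im z^\ast=\im\omega_1^\ast-\im z^\ast$, so that $\sigma\,\im\omega_1^\ast+(1-\sigma)\,\im z^\ast\le 0$; since $\sigma>0$ and both summands are non-negative, this forces $\im\omega_1^\ast=0$, a contradiction.

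Hence both $\im\omega_1^\ast=\im\omega_2^\ast=0$. Using~\eqref{le kkv} and taking imaginary parts of $F_{\mu_1\boxplus\mu_2}(z_n)=F_{\mu_1}(\omega_2(z_n))=\omega_1(z_n)+\omega_2(z_n)-z_n$, the limit gives $\im F_{\mu_1\boxplus\mu_2}(z^\ast)=-\im z^\ast\le 0$. Combined with $\im F_{\mu_1\boxplus\mu_2}\ge 0$ on $\bar{\C^+}$, we conclude $\im F_{\mu_1\boxplus\mu_2}(z^\ast)=0$ and $\im z^\ast=0$, hence $z^\ast\in\mathcal{I}\subset\mathcal{B}_{\mu_1\boxplus\mu_2}$. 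But in the regular bulk the identity $\im F_{\mu_1\boxplus\mu_2}(E)=\pi f_{\mu_1\boxplus\mu_2}(E)/|m_{\mu_1\boxplus\mu_2}(E)|^{2}$ together with~\eqref{assumption lower bound on density} and~\eqref{assumption bounded m} yields $\im F_{\mu_1\boxplus\mu_2}(z^\ast)\ge \pi\kappa_0/M^2>0$, contradicting $\im F_{\mu_1\boxplus\mu_2}(z^\ast)=0$. This contradiction proves~\eqref{le stability}, with the positive constant $k$ quantified by the stability constants $\sigma(\mu_j,\mathcal{K})$, by $\kappa_0,M$, and by $\eta_{\mathrm{M}}$.

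The main obstacle I anticipate is a clean handling of the boundary behavior: one must justify that the limits $\omega_j^\ast$ can be taken along $\R$ and that $F_{\mu_1\boxplus\mu_2}$ retains a strictly positive imaginary part there. Both facts rely on Propositions~\ref{prop extension} and~\ref{le real analytic prop} together with the definition of the regular bulk, which ensure that along $\mathcal{I}$ no atoms or vanishing-density points of $\mu_1\boxplus\mu_2$ are encountered.
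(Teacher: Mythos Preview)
Your proof is correct and uses essentially the same ingredients as the paper's: the key inequality~\eqref{le first stability constant} from Lemma~\ref{le lemma stability constants} and the strict positivity of $\im F_{\mu_1\boxplus\mu_2}$ on the regular bulk. The only difference is organizational: the paper first establishes a uniform lower bound $\im\omega_1(z)+\im\omega_2(z)\ge\kappa_2$ directly from~\eqref{assumption bounded m} and~\eqref{le mimimi} (this is your Case~2 done in advance), and then invokes~\eqref{le first stability constant} to rule out either summand being small (your Case~1), whereas you package the same two steps as a sequential compactness argument with a case split.
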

\begin{remark}\label{le remark for k}
 The constant $k$ in~\eqref{le stability} depends on the interval $\mathcal{I}$ through the constants $\kappa_0$ in~\eqref{assumption lower bound on density} and $M$ in~\eqref{assumption bounded m}. It further depends on~$\eta_{\mathrm{M}}$, as well as on $\sigma(\mu_1,\mathcal{K}_2)$ and $\sigma(\mu_2,\mathcal{K}_1)$ in~\eqref{le first stability constant}, with 
 \begin{align}\label{le kevin set}
 \mathcal{K}_i=\{u\in\C^+\,:\, u=\omega_{i}(z)\,, z\in\mathcal{S}_{\mathcal{I}}(0,\eta_{\mathrm{M}})\}\,,\qquad i=1,2\,.
 \end{align}
\end{remark}

\begin{proof}[Proof of Lemma~\ref{le lemma stability}]
First note that there is $\kappa_1>0$ such that $\im m_{\mu_1\boxplus\mu_2}(z)\ge \kappa_1$ for all $z\in \mathcal{S}_{\mathcal{I}}(0,\eta_{\mathrm{M}})$; \cf~\eqref{le mimimi}. Moreover, there is $M<\infty$ such that $|m_{\mu_1\boxplus\mu_2}(z)|\le M$ for all ${z\in\mathcal{S}_{\mathcal{I}}(0,\eta_{\mathrm{M}})}$; \cf~\eqref{assumption bounded m}. Recall from~\eqref{le definiting equations} and~\eqref{le kkv} that
\begin{align}
\omega_1(z)+\omega_2(z)=z-\frac{1}{m_{\mu_1\boxplus\mu_2}(z)}\,, \qquad \qquad z\in\C^+\,.
\end{align}
Hence, considering the imaginary part, we notice from~\eqref{assumption bounded m} that there is $\kappa_2>0$ such that 
\begin{align}
\min_{z\in \mathcal{S}_{\mathcal{I}}(0,\eta_{\mathrm{M}})}(\im \omega_1(z)+\im \omega_2(z))\ge \kappa_2\,.
 \end{align}

It remains to show that $\im \omega_1$ and $\im\omega_2$ are separately bounded from below. To do so we invoke~\eqref{le first stability constant} and assume by contradiction that $\im \omega_1(z)\le \epsilon$, for some small $0\le\epsilon<\kappa_2/2$. We must thus have $\im \omega_2(z)\ge \kappa_2/2$. Since $\mu_1$ is assumed not to be a single point mass,  Lemma~\ref{le lemma stability constants} assures that 
\begin{align}\label{le esel2}
\im F_{\mu_1}(\omega_2(z))\ge \frac{\im \omega_2(z)}{1-\sigma(\mu_1,\mathcal{K}_2)} \,,\qquad\qquad z\in\mathcal{S}_{\mathcal{I}}(0,\eta_{\mathrm{M}})\,,	
\end{align}
with $0<\sigma(\mu_1,\mathcal{K}_2)<1$, where $\mathcal{K}_2$ denotes the image of $\mathcal{S}_{\mathcal{I}}(0,\eta_{\mathrm{M}})$ under the map $\omega_2$ (which is necessarily compact by Lemma~\ref{le lem upper bound on omega}). On the other hand,~\eqref{le H system} implies
\begin{align}\label{le esel1} 
\im F_{\mu_1}(\omega_2(z))=\im \omega_2(z)+\im \omega_1(z)-\im z\,,\qquad \qquad z\in\C^+\,.
\end{align}
Since $\im \omega_1(z)\ge \im z$, by Proposition~\ref{le prop 1}, we get, by comparing~\eqref{le esel1} and~\eqref{le esel2}, a contradiction with the assumption that $\im \omega_1(z)\le \epsilon$, for sufficiently small $\epsilon$. Repeating the argument with the r\^{o}les of the indices $1$ and $2$ interchanged, we get~\eqref{le stability}.\qedhere
\end{proof}

\subsection{Linear stability of~\eqref{le H system}}
Having established lower and upper bounds on the subordination functions $\omega_1$, $\omega_2$, we now turn to the stability of the system $\PP_{\mu_1,\mu_2}(\omega_1,\omega_2,z)=0$. Remember that we call the system linearly $S$-stable at $(\omega_1,\omega_2)$ if $\Gamma_{\mu_1,\mu_2}(\omega_1,\omega_2)\le S$, where~$\Gamma_{\mu_1,\mu_2}$ is defined in~\eqref{le what stable means}.

\begin{lemma}\label{lemma linear stability}
Let $\mu_1$, $\mu_2$ be two probability measures on~$\R$ satisfying~\eqref{le compact support assumption} for some $L<\infty$. Assume that neither of them is a single point mass and that at least one of them is supported at more than two points. Let $\mathcal{I}\subset \mathcal{B}_{\mu_1\boxplus\mu_2}$ be a compact non-empty interval.

Then, there is a finite constant $S$ such that
\begin{align}\label{le linear stability equation}
\max_{z\in\mathcal{S}_{\mathcal{I}}(0,\eta_{\mathrm{M}})}\Gamma_{\mu_1,\mu_2}(\omega_1(z),\omega_2(z))\le S\,,
\end{align}
and 
\begin{align}\label{le control of omega derivata}
\max_{z\in\mathcal{S}_{\mathcal{I}}(0,\eta_{\mathrm{M}})}|\omega_1'(z)|\le 2S\,,\qquad\qquad \max_{z\in\mathcal{S}_{\mathcal{I}}(0,\eta_{\mathrm{M}})}|\omega_2'(z)|\le 2S\,,
\end{align}
where $\omega_1(z)$, $\omega_2(z)$ are the solutions to $\PP_{\mu_1,\mu_2}(\omega_1,\omega_2,z)=0$.
\end{lemma}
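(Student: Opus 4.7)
The plan is to control the determinant of the Jacobian $\mathrm{D}\PP$, whose magnitude
\[
|\det \mathrm{D}\PP(\omega_1,\omega_2)| = |1 - (F'_{\mu_1}(\omega_2)-1)(F'_{\mu_2}(\omega_1)-1)|
\]
must be uniformly bounded below on $\mathcal{S}_\mathcal{I}(0,\eta_{\mathrm{M}})$ for the system to be linearly stable. The key ingredient will be the quantitative smoothness inequality~\eqref{le second stability constant} of Lemma~\ref{le lemma stability constants}, applied to the measure supported at more than two points.

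Without loss of generality I assume $\mu_1$ is supported at more than two points. By Lemmas~\ref{le lem upper bound on omega} and~\ref{le lemma stability}, the image $\mathcal{K}_i \deq \{\omega_i(z) : z \in \mathcal{S}_\mathcal{I}(0,\eta_{\mathrm{M}})\}$ is contained in the compact subset $\{u \in \C^+ : |u| \le K/2,\ \im u \ge 2k\}$ of $\C^+$. Applying~\eqref{le second stability constant} on $\mathcal{K}_2$ supplies a uniform constant $\widetilde\sigma_1 \deq \widetilde\sigma(\mu_1,\mathcal{K}_2) > 0$ with
\[
|F'_{\mu_1}(\omega_2(z))-1| \le (1-\widetilde\sigma_1)\,\frac{\im F_{\mu_1}(\omega_2(z)) - \im \omega_2(z)}{\im \omega_2(z)}\,.
\]
For $\mu_2$, which is not a point mass but may be a two-point combination, the analogous bound without any spectral gap $\widetilde\sigma_2$ follows by the triangle inequality applied directly to the Nevanlinna representation~\eqref{le neva for F}, an estimate already present in the proof of Lemma~\ref{le lemma stability constants}.

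Taking imaginary parts in the subordination equations~\eqref{le definiting equations} yields the identities $\im F_{\mu_1}(\omega_2) - \im \omega_2 = \im \omega_1 - \im z$ and $\im F_{\mu_2}(\omega_1) - \im \omega_1 = \im \omega_2 - \im z$. Multiplying the two derivative bounds and using $\im \omega_j(z) \ge \im z \ge 0$ from Proposition~\ref{le prop 1} produces
\[
|(F'_{\mu_1}(\omega_2)-1)(F'_{\mu_2}(\omega_1)-1)| \le (1-\widetilde\sigma_1)\,\frac{(\im\omega_1-\im z)(\im\omega_2-\im z)}{\im\omega_1\,\im\omega_2} \le 1 - \widetilde\sigma_1\,,
\]
since the rational factor is bounded by one (equivalently, $(\im z)^2 \le \im z(\im\omega_1 + \im\omega_2)$). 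Hence $|\det \mathrm{D}\PP| \ge \widetilde\sigma_1$ uniformly, and Cramer's rule, combined with the uniform bound on $|F'_{\mu_j}(\omega_i(z))|$ which is automatic from analyticity of $F_{\mu_j}$ on the precompact set $\mathcal{K}_i \subset \C^+$, yields~\eqref{le linear stability equation}.

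For the derivative estimates~\eqref{le control of omega derivata}, implicit differentiation of the identity $\PP_{\mu_1,\mu_2}(\omega_1(z),\omega_2(z),z) = 0$ in $z$ gives $\mathrm{D}\PP \cdot (\omega_1'(z),\omega_2'(z))^\top = -(1,1)^\top$; inverting and using~\eqref{le linear stability equation} provides $\max_j |\omega_j'(z)| \le \sqrt{2}\, S \le 2S$. The one delicate point throughout is that $\widetilde\sigma_1$ must be uniform on $\mathcal{S}_\mathcal{I}(0,\eta_{\mathrm{M}})$; this is handled by applying Lemma~\ref{le lemma stability constants} a single time on the precompact image $\mathcal{K}_2$, and the precompactness in $\C^+$ crucially relies on the uniform lower bound $\im\omega_2 \ge 2k > 0$ from Lemma~\ref{le lemma stability}.
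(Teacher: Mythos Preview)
Your proof is correct and follows essentially the same approach as the paper: both assume without loss of generality that $\mu_1$ is supported at more than two points, apply the strict inequality~\eqref{le second stability constant} on the compact image $\mathcal{K}_2$ to obtain $|(F'_{\mu_1}(\omega_2)-1)(F'_{\mu_2}(\omega_1)-1)|\le 1-\widetilde\sigma$, bound the entries of the inverse Jacobian via Cramer's rule, and then differentiate $\PP=0$ for the derivative bounds. The only cosmetic differences are that you retain the $-\im z$ terms in the intermediate ratio bounds (whereas the paper drops them immediately) and that you bound the off-diagonal entries $|F'_{\mu_j}(\omega_i)-1|$ via analyticity on a compact set rather than via the explicit ratio $K/(4k)$.
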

\begin{remark}
Lemma~\ref{lemma linear stability} is the first instance where we use that at least one of $\mu_1$ and $\mu_2$ is supported at more than two points.  For definiteness, we assume that $\mu_1$ is supported at more than two points. The constant $S$ in~\eqref{le linear stability equation} depends on the interval $\mathcal{I}$ through the constant $\kappa_0$ in~\eqref{assumption lower bound on density}, on the constants~$\eta_{\mathrm{M}}$,~$L$ in~\eqref{le compact support assumption},~$\sigma(\mu_1,\mathcal{K}_2)$ and $\sigma(\mu_2,\mathcal{K}_2)$, as well as on $\widetilde\sigma(\mu_1,\mathcal{K}_2)$ of~\eqref{le second stability constant} with $\mathcal{K}_2$ defined in~\eqref{le kevin set}.
\end{remark}

\begin{proof}[Proof of Lemma~\ref{lemma linear stability}]
 Using~\eqref{le what stable means} and  Cramer's rule, $\Gamma\equiv\Gamma_{\mu_1,\mu_2}(\omega_1,\omega_2,z) $ equals\small
\begin{align}\label{le gamma for the second}
 \Gamma=\frac{1}{\left|1-(F_{\mu_1}'(\omega_2)-1) (F_{\mu_2}'(\omega_1)-1) \right|}\left\|\left( \begin{array}{cc}
 -1& -F_{\mu_1}'(\omega_2(z))+1  \\
 -F_{\mu_2}'(\omega_1(z))+1 & -1\\
   \end{array} \right) \right\|\,.
\end{align}\normalsize
As above, we assume for definiteness that $\mu_1$ is supported at more than two points. We first focus on $F'_{\mu_1}(\omega_2)$. Recalling the definition of $\mathcal{K}_2$ from Remark~\ref{le remark for k} and invoking~\eqref{le second stability constant}, we obtain
\begin{align}
 |F'_{\mu_1}(\omega_2(z))-1| \le \big( 1-\widetilde\sigma(\mu_1,\mathcal{K}_2)\big) \frac{\im F_{\mu_1}(\omega_2(z))-\im\omega_2(z)}{\im \omega_2(z)} \,,
\end{align}
for all $z\in\mathcal{S}_{\mathcal{I}}(0,\eta_{\mathrm{M}})$, where $0<\widetilde\sigma(\mu_1,\mathcal{K}_2)<1$. Abbreviating $\widetilde\sigma\equiv\widetilde\sigma(\mu_1,\mathcal{K}_2)$ and using $\PP_{\mu_1,\mu_2}(\omega_1(z),\omega_2(z),z)=0$, we thus have
\begin{align}\label{le 413}
  |F'_{\mu_1}(\omega_2(z))-1| \le \left(1-\widetilde\sigma\right)\frac{\im \omega_1(z)}{\im \omega_2(z)}\,,\qquad\qquad z\in \mathcal{S}_{\mathcal{I}}(0,\eta_{\mathrm{M}})\,.
\end{align}
Reasoning in the similar way (\cf~\eqref{le fbound}), we also obtain
\begin{align}\label{le 414}
 |F'_{\mu_2}(\omega_1(z))-1|\le \frac{\im \omega_2(z)}{\im \omega_1(z)}\,,\qquad \qquad z\in\C^+\,,
\end{align}
where the inequality may saturate here since we do not exclude $\mu_2$ being supported at two points only. Multiplying~\eqref{le 413} and~\eqref{le 414}, we get
\begin{align}\label{le bound on determinant}
 \max_{z\in\mathcal{S}_{\mathcal{I}}(0,\eta_{\mathrm{M}})}|F'_{\mu_1}(\omega_2(z))-1|\, |F'_{\mu_2}(\omega_1(z))-1|\le  1-\widetilde\sigma\,.
\end{align}
Using Lemma~\ref{le lem upper bound on omega} and Lemma~\ref{le lemma stability}, we also have from~\eqref{le 413} and~\eqref{le 414} that
\begin{align}\label{le bound on the entries of gamma}
\max_{z\in\mathcal{S}_{\mathcal{I}}(0,\eta_{\mathrm{M}})}|F'_{\mu_i}(\omega_j(z))-1|\le \frac{K}{4k}\,,\qquad\qquad \{i,j\}=\{1,2\}\,.
\end{align}
Hence, bounding the operator norm by the Hilbert-Schmidt norm in~\eqref{le gamma for the second}, we obtain by~\eqref{le bound on the entries of gamma} and~\eqref{le bound on determinant} that
\begin{align}
 \max_{z\in\mathcal{S}_{\mathcal{I}}(0,\eta_{\mathrm{M}})}\Gamma_{\mu_1,\mu_2}(\omega_1(z),\omega_2(z))\le \frac{\sqrt{2}}{\widetilde\sigma}\left(1+\Big(\frac{K}{4k} \Big)^2\right)^{1/2}=:S\,,
\end{align}
with finite constant $S$. This proves~\eqref{le linear stability equation}.	

The estimates in~\eqref{le control of omega derivata} follow by differentiating the equation $\PP_{\mu_1,\mu_2}(\omega_1(z),\omega_2(z),z)=0$ with respect to $z$. We get
\begin{align}\label{le baby system}\left( \begin{array}{cc}
 -1& F_{\mu_1}'(\omega_2(z))-1  \\
 F_{\mu_2}'(\omega_1(z))-1 & -1\\
   \end{array} \right)\,\left(\begin{array}{c}\omega_1'(z)\\ \omega_2'(z) \end{array}\right)=\left(\begin{array}{c}1\\ 1 \end{array}\right)\,.
\end{align}
From~\eqref{le linear stability equation} we know that $\PP$ is uniformly $S$-stable and we get~\eqref{le control of omega derivata} by inverting~\eqref{le baby system}.\qedhere
\end{proof}

\begin{remark}
 The crucial estimate in the proof above is~\eqref{le bound on determinant}. An alternative proof of~\eqref{le bound on determinant} under the assumptions that both $\mu_1$ and $\mu_2$ are supported at more than two points was pointed out by  
 an anonymous referee. From~\eqref{le definiting equations} we observe that the subordination function~$\omega_1(z)$ appears, for fixed $z\in\C^+$, as the fixed point of the map $\mathcal{F}_z\,:\C^+\rightarrow \C^+$,
 \begin{align}\label{the calF map}
  u\mapsto \mathcal{F}_z(u)\deq F_{\mu_1}(F_{\mu_2}(u)-u+z)-F_{\mu_2}(u)-u+z\,.
 \end{align}
  Indeed, assuming that $\omega_1\in\C^+$ and that $\mu_1$, $\mu_2$ are supported at least at three points (so that $F_{\mu_1}(z)-z$ and $F_{\mu_2}(z)-z$ are not M\"obius transformations), the fixed point $\omega_1(z)$ is attracting as was shown in~\cite{BB}. Thus, for any fixed $k>0$, the Schwarz--Pick Theorem and~\eqref{le definiting equations} imply that for any compact subset $\widehat{\mathcal{K}}$ of $\{z\in\C^+\cup\R\,:\, \im \omega_1(z),\im \omega_2(z)\ge 2k\}$ there is a constant $\widehat{\sigma}(\widehat{\mathcal{K}})<1$ such that $|F'_1(\omega_2(z)-1||F'_2(\omega_1(z)-1|\le \widehat{\sigma}(\widehat{\mathcal{K}})<1$, for any $z\in\widehat{ \mathcal{K}}$. Thus, under the assumption that $\mu_1$ and $\mu_2$ are both supported at least at three points,~\eqref{le bound on determinant} follows from Lemma~\ref{le lem upper bound on omega} and Lemma~\ref{le lemma stability}.
 
\end{remark}

Collecting the results of this section, we obtain the proof of Theorem~\ref{thm stability}.
\begin{proof}[Proof of Theorem~\ref{thm stability}]
Lemma~\ref{le lemma stability} proves~\eqref{le thm lower bound on omega equation}. Lemma~\ref{lemma linear stability} proves~\eqref{le thm linear stability equation}. 
\end{proof}

\section{Perturbations of the system~\eqref{le H system}}\label{section perturbation}
In this section, we study perturbations of the system $\PP_{\mu_1,\mu_2}(\omega_1,\omega_2,z)=0$, where $\mu_1$, $\mu_2$ denote general compactly supported probability measures on $\R$. The main results of this section, Proposition~\ref{le proposition perturbation of system} below, is used repeatedly in the continuity argument to prove Theorem~\ref{thm041801}. Yet, as noted in Corollary~\ref{corollary perturbation}, it is of interest itself and it is also used in~\cite{BES15-2}.

\begin{proposition}\label{le proposition perturbation of system}
Fix $z_0\in\C^+$. Assume that the functions $\widetilde\omega_1$, $\widetilde\omega_2$, $\widetilde{r}_1$, $\widetilde{r}_2\,:\,\C^+\rightarrow \C$ satisfy $\im\widetilde\omega_1(z_0)>0$, $\im\widetilde\omega_2(z_0)>0$ and 
 \begin{align}\label{la perturbed system}
 \PP_{\mu_1,\mu_2}(\widetilde\omega_1(z_0),\widetilde\omega_2(z_0),z_0)=\widetilde r(z_0)\,,
 \end{align}
where $\widetilde{r}(z)\deq(\widetilde r_1(z),\widetilde r_2(z))^\top$. Assume moreover that there is $\delta\in[0,1]$ such that
 \begin{align}\label{le apriori closeness}
 |\widetilde\omega_1(z_0)-\omega_1(z_0)|\le \delta\,,\qquad |\widetilde\omega_2(z_0)-\omega_2(z_0)|\le \delta\,,  
 \end{align}
 where $\omega_1(z)$, $\omega_2(z)$ solve the unperturbed system $\PP_{\mu_1,\mu_2}(\omega_1,\omega_2,z)=0$. Assume that there is a constant $S$ such that $\PP$ is linearly $S$-stable at $(\omega_1(z_0),\omega_2(z_0))$, and assume in addition that there are strictly positive constants $K$ and $k$ with $k>\delta$ and with $k^2>\delta KS$ such that
\begin{align}\label{le stability sum ims}
0<2k\le \im \omega_1(z_0)\le K \,,\qquad\quad 0<2k\le \im \omega_2(z_0)\le K \,.
 \end{align}
 
Then we have the bounds
\begin{align}\label{le conclusion of lemma}
 |\widetilde\omega_1(z_0)-\omega_1(z_0)|\le  2S \|\widetilde{r}(z_0)\|\,,\qquad|\widetilde\omega_2(z_0)-\omega_2(z_0)|\le  2S \|\widetilde{r}(z_0)\|\,.
\end{align} 
\end{proposition}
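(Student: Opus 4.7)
The plan is to Taylor expand $\PP_{\mu_1,\mu_2}$ around the exact solution $(\omega_1(z_0),\omega_2(z_0))$ and to exploit linear $S$-stability to invert the Jacobian, trading the quadratic Taylor remainder against the deviation vector $\Delta\deq(\widetilde\omega_1(z_0)-\omega_1(z_0),\widetilde\omega_2(z_0)-\omega_2(z_0))^\top$.

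First I would establish a uniform second-derivative bound for $F_{\mu_1}$ and $F_{\mu_2}$ near the unperturbed solution. By~\eqref{le stability sum ims} we have $\im\omega_j(z_0)\ge 2k$, and the a priori closeness~\eqref{le apriori closeness} together with $k>\delta$ guarantees that both $\omega_j(z_0)$ and $\widetilde\omega_j(z_0)$, as well as the entire line segment joining them, lie in the region $\{w\in\C^+\,:\,\im w\ge k,\ |w|\le K+1\}$. Because $\mu_1,\mu_2$ are compactly supported, differentiating the Nevanlinna representation~\eqref{le neva for F} under the integral sign yields a uniform bound $|F_{\mu_j}''(w)|\le C_\ast$ on this region, where $C_\ast$ depends only on $K$, $k$ and the support radius $L$ of $\mu_j$.

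Subtracting the identity $\PP_{\mu_1,\mu_2}(\omega_1(z_0),\omega_2(z_0),z_0)=0$ from~\eqref{la perturbed system} and expanding each entry of $\PP$ to second order, I obtain
\begin{align*}
\widetilde r(z_0)=\mathrm{D}\PP(\omega_1(z_0),\omega_2(z_0))\,\Delta + E(\Delta),
\end{align*}
where $E(\Delta)$ collects only the Taylor remainders $\tfrac12 F_{\mu_1}''(\xi_2)\Delta_2^2$ and $\tfrac12 F_{\mu_2}''(\xi_1)\Delta_1^2$ (with $\xi_j$ lying on the relevant segment), because all other terms in $\PP$ are linear in $(\omega_1,\omega_2)$. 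Consequently $\|E(\Delta)\|\le C_\ast\|\Delta\|^2$. Using linear $S$-stability to invert $\mathrm{D}\PP$ at $(\omega_1(z_0),\omega_2(z_0))$ then gives
\begin{align*}
\|\Delta\|\le S\|\widetilde r(z_0)\| + S\,C_\ast\,\|\Delta\|^2.
\end{align*}

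The proof concludes by a bootstrap using the a priori bound $\|\Delta\|\le\sqrt{2}\,\delta$ from~\eqref{le apriori closeness}: inserting it into one of the two $\|\Delta\|$ factors in the quadratic remainder yields $\|\Delta\|\le S\|\widetilde r(z_0)\|+\sqrt{2}\,S\,C_\ast\,\delta\,\|\Delta\|$, and provided $\sqrt{2}\,S\,C_\ast\,\delta\le \tfrac12$ the first term dominates, giving $\|\Delta\|\le 2S\|\widetilde r(z_0)\|$, which is exactly~\eqref{le conclusion of lemma}. The main obstacle is the quantitative matching between the bound on $C_\ast$ (which scales like a negative power of $k$, with $K$ and $L$ entering through the moments of the Nevanlinna measure) and the hypothesis $k^2>\delta\,K\,S$; the form of the hypothesis suggests that a refined estimate using the bound $|\omega_j(z_0)|\le K$ and Schwarz–Pick type estimates analogous to~\eqref{le 413}--\eqref{le 414} should be used in place of the naive differentiation of~\eqref{le neva for F}, in order to make the constants work out cleanly.
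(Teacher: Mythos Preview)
Your approach is essentially the paper's: Taylor expand, invert the Jacobian using $S$-stability, and control the quadratic remainder against the a~priori bound. You also correctly flag the one real gap, the quantitative constant in the remainder. Here is how the paper closes it, and two small corrections to your sketch.

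First, in the hypothesis~\eqref{le stability sum ims} the constant $K$ bounds $\im\omega_j(z_0)$, not $|\omega_j(z_0)|$; no bound on $|\omega_j|$ or on the support radius $L$ is needed or used. Second, the paper does not use the Lagrange form of the remainder (which would require bounding $F''$ at an intermediate point), but rather the full Taylor series with all derivatives evaluated at $\omega_j(z_0)$ itself. The point is that at $\omega_j(z_0)$ one has the exact identity, coming from $\PP_{\mu_1,\mu_2}(\omega_1(z_0),\omega_2(z_0),z_0)=0$,
\[
\im F_{\mu_i}(\omega_j(z_0))-\im\omega_j(z_0)=\im\omega_i(z_0)-\im z_0\,,\qquad \{i,j\}=\{1,2\}\,.
\]
Combining this with the Nevanlinna representation~\eqref{le neva for F} (specifically $\int\frac{1+x^2}{|x-\omega|^2}\,\dd\rho_{F_i}=(\im F_i(\omega)-\im\omega)/\im\omega$ and $|F_i^{(n)}(\omega)|\le (\im\omega)^{-(n-1)}\int\frac{1+x^2}{|x-\omega|^2}\,\dd\rho_{F_i}$) yields the sharp bound
\[
|F_i^{(n)}(\omega_j(z_0))|\le \frac{\im\omega_i(z_0)-\im z_0}{(\im\omega_j(z_0))^n}\le \frac{K}{(2k)^n}\,,\qquad n\ge 2\,.
\]
Summing the Taylor tail (using $|\Omega_j|/2k\le\delta/2k<1$) gives the remainder $\le \frac{K}{4k^2}\|\Omega\|^2$, hence $\|\Omega\|\le S\|\widetilde r\|+\frac{KS}{4k^2}\|\Omega\|^2$. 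The paper then finishes with a dichotomy (either $\|\Omega\|\le 2S\|\widetilde r\|$ or $\|\Omega\|\ge 2k^2/(KS)$); the latter contradicts $\|\Omega\|\le 2\delta$ precisely because $k^2>\delta KS$. This is the ``refined estimate'' you anticipated; no compactness of $\mu_j$ or Schwarz--Pick is needed.
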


\begin{proof}
 Combining~\eqref{le stability sum ims} and~\eqref{le apriori closeness} with~$\delta<k$, we get
 \begin{align}\label{le stability 2}
  \im \widetilde\omega_1(z_0)\ge k\,,\qquad\qquad \im \widetilde\omega_2(z_0)\ge k\,.
  \end{align}
Next, we bound higher derivatives of $F_i\equiv F_{\mu_i}$, $i=1,2$. We first note that by the Nevanlinna representation~\eqref{le neva for F} we have
\begin{align}\label{le rhb}
 \frac{\im F_i(\omega)}{\im \omega}= 1+\int_\R\frac{1+x^2}{|x-\omega |^2}\,\dd\rho_{F_i}(x)\,,\qquad\qquad \omega\in\C^+\,,\qquad i=1,2\,.
\end{align}
On the other hand, we also have from~\eqref{le neva for F} that
\begin{align}\label{le derivative F finnish}
 |F_i'(\omega)-1|\le\int_\R\frac{1+x^2}{|x-\omega|^2}\,\dd\rho_{F_i}(x)\,,\qquad\qquad \omega\in\C^+\,,\qquad i=1,2\,,
\end{align}
and analogously for higher derivatives, $n\ge 1$,
\begin{align}\label{le higher derivatives F finnish}
 |F_i^{(n)}(\omega)|\le\int_\R\frac{1+x^2}{|x-\omega|^{n+1}}\,\dd\rho_{F_i}(x)\le\frac{1}{(\im \omega)^{n-1}}\int_\R\frac{1+x^2}{|x-\omega|^2}\,\dd\rho_{F_i}(x)\,,
\end{align}
$\omega\in\C^+$, $i=1,2$. Thus, combining~\eqref{le derivative F finnish},~\eqref{le rhb} and~\eqref{le H system} we get
\begin{align}\label{le fbound}
 |F_i'(\omega_j(z))-1|\le\frac{\im F_i(\omega_j(z))-\im \omega_j(z)}{\im \omega_j(z)}=\frac{\im \omega_i(z)-\im z}{\im \omega_j(z)}\,,\qquad \{i,j\}=\{1,2\}\,,
\end{align}
$z\in\C^+$, and similarly, starting from~\eqref{le higher derivatives F finnish}, 
\begin{align}\label{le bound on F derivata}
 |F_i^{(n)}(\omega_j(z))|\le \frac{\im \omega_i(z)-\im z}{(\im \omega_j(z))^{n}}\,,\qquad\qquad z\in\C^+\,,\qquad \{i,j\}=\{1,2\}\,.
\end{align}

Let $\Omega_{i}(z)\deq\widetilde\omega_i(z)-\omega_i(z)$, $i=1,2$, and $\Omega\deq(\Omega_1,\Omega_2)^\top$. Fixing $z=z_0$ and Taylor expanding $F_1(\widetilde\omega_2(z_0))$ around $\omega_2(z_0)$ we get
\begin{align}\label{le full taylor polynomial}
 F'_1(\omega_2(z_0))\Omega_2(z_0)-\Omega_1(z_0)-\Omega_2(z_0)=\widetilde{r}_1(z)-\sum_{n\ge 2}\frac{1}{n!}F^{(n)}_1(\omega_2(z_0))\Omega_2(z_0)^n\,.
\end{align}
Recalling that $\|\Omega(z_0)\|/2k\le \delta/k<1$ and using~\eqref{le bound on F derivata} together with~\eqref{le stability sum ims}, we obtain from~\eqref{le full taylor polynomial} the estimate
\begin{align}\label{le full expansion of GammaA}
 \left| F'_1(\omega_2(z_0))\Omega_2(z_0)-\Omega_1(z_0)-\Omega_2(z_0)\right|\le \|\widetilde r(z_0)\|+ \frac{K}{4 k^{2}}\|\Omega(z_0)\|^2\,,
\end{align}
and the analogous expansion with the r\^{o}les of the indices $1$ and $2$ interchanged. We therefore obtain from~\eqref{le what stable means} and from solving the linearized equation that
\begin{align}
 \|\Omega(z_0)\|\le S \|\widetilde r(z_0)\|+\frac{KS}{4 k^{2}}\|\Omega(z_0)\|^2\,.
\end{align}
Thus, we have the dichotomy that either $ \|\Omega(z_0)\|\le 2 S\,\|\widetilde r\|$ or $2(KS)^{-1}k^{2}\le   \|\Omega(z_0)\|$. Since $k^2>\delta KS$ by assumption, the second alternative contradicts $\|\Omega(z_0)\|\le 2\delta$. This proves the estimates in~\eqref{le conclusion of lemma}.\qedhere
\end{proof}

In Proposition~\ref{le proposition perturbation of system} we assumed the apriori bound $|\widetilde\omega_i-\omega_i|\le \delta$; see~\eqref{le apriori closeness}. The next lemma shows that we may drop this assumption, for spectral parameters $z$ with sufficiently large imaginary part, at the price of assuming effective lower bounds on $\im \widetilde\omega_i$. This statement will be used as an initial input to start the continuity argument
in Section~\ref{le section proof of theorem matrix}.

\begin{lemma}\label{le lemma large eta}
Assume there is a (large) $\widetilde\eta_0>0$ such that for any $z\in\C^+$ with $\im z\ge \widetilde\eta_0$ the analytic functions $\widetilde\omega_1$, $\widetilde\omega_2$, $\widetilde{r}_1$, $\widetilde{r}_2\,:\,\C^+\rightarrow \C$ satisfy
\begin{align}\label{le lower bound on tilde omega}
\im \widetilde\omega_1(z)-\im z\ge 2\|\widetilde r(z)\|\,,\qquad\qquad \im \widetilde\omega_2(z)- \im z\ge 2\|\widetilde r(z)\|\,.
 \end{align}
and
\begin{align}\label{le other PP system}
  \PP_{\mu_1,\mu_2}(\widetilde\omega_1(z),\widetilde\omega_2(z),z)=\widetilde{r}(z)\,,
 \end{align}
where $\widetilde r(z)\deq(\widetilde{r}_1(z),\widetilde{r}_2(z))^\top$.
 
 Then there is a constant $\eta_0>0$, with $\eta_0 \ge \widetilde\eta_0$,  such that
 \begin{align}\label{le conclusion of large eta lemma}
 |\widetilde\omega_1(z)-\omega_1(z)|&\le 2\|\widetilde{r}(z)\|\,,\qquad\qquad|\widetilde\omega_2(z)-\omega_2(z)|\le 2\|\widetilde{r}(z)\|\,,
\end{align}
on the domain $\{z\in\C^+\,:\,\im z\ge \eta_0\}$, where $\omega_1$ and $\omega_2$ are the subordination functions associated with~$\mu_1$ and~$\mu_2$. The constant $\eta_0$ depends on the measures~$\mu_1$ and~$\mu_2$, and on the function~$\widetilde r$ through the constant $\widetilde\eta_0>0$.

\end{lemma}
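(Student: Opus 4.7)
The plan is to bypass the a priori closeness hypothesis of Proposition~\ref{le proposition perturbation of system} by exploiting the fact that for very large $\im z$ the partial Jacobian $\mathrm{D}\PP$ is a small perturbation of $-\mathrm{I}$ and hence is trivially invertible with operator norm arbitrarily close to~$1$. Under such a benign linearization, the nonlinear error in comparing the two systems is negligible, and no information about the size of $|\widetilde \omega_i - \omega_i|$ is required beforehand.

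First I would record two easy lower bounds on imaginary parts: Proposition~\ref{le prop 1} gives $\im\omega_i(z)\ge \im z$, while the hypothesis~\eqref{le lower bound on tilde omega} forces $\im\widetilde\omega_i(z)\ge \im z$. By convexity of $\C^+$, the line segment from $\omega_i(z)$ to $\widetilde\omega_i(z)$ then stays in $\C^+$ with imaginary part at least $\im z$ at every point. Second, starting from the Nevanlinna representation~\eqref{le neva for F} and using that $\mu_i$ is compactly supported (which in turn makes $\rho_{F_{\mu_i}}$ a finite, compactly supported Borel measure on~$\R$), I would derive the quantitative smallness bound
\begin{align*}
|F_{\mu_i}'(u)-1|\le \int_\R \frac{1+x^2}{|x-u|^2}\,\dd\rho_{F_{\mu_i}}(x)\le \frac{C_i(\mu_i)}{(\im u)^2}\,,\qquad u\in\C^+\,,
\end{align*}
with a constant $C_i(\mu_i)$ that depends only on $\mu_i$.

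The central step is to subtract $\PP_{\mu_1,\mu_2}(\omega_1(z),\omega_2(z),z)=0$ from~\eqref{le other PP system} and apply the fundamental theorem of calculus to $F_{\mu_i}$ along the segments identified in Step~1. Writing $\Omega_i(z)\deq\widetilde\omega_i(z)-\omega_i(z)$, this produces the $2\times 2$ linear system
\begin{align*}
\begin{pmatrix}-1 & \Delta_{12}(z)\\ \Delta_{21}(z) & -1\end{pmatrix}\begin{pmatrix}\Omega_1(z)\\ \Omega_2(z)\end{pmatrix}=\begin{pmatrix}\widetilde r_1(z)\\ \widetilde r_2(z)\end{pmatrix}\,,
\end{align*}
where $\Delta_{ij}(z)\deq\int_0^1\bigl(F_{\mu_i}'(\omega_j(z)+t\Omega_j(z))-1\bigr)\,\dd t$. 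By the previous paragraph $|\Delta_{ij}(z)|\le C/(\im z)^2$, which depends only on~$\mu_1,\mu_2$.

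Finally, an explicit inversion of this $2\times 2$ matrix gives $\|\Omega(z)\|\le (\sqrt{2}+o(1))\|\widetilde r(z)\|$ as $\im z\to\infty$; choosing $\eta_0\ge\widetilde\eta_0$ large enough that $C/\eta_0^2$ is small enough to bring the prefactor below~$2$ yields~\eqref{le conclusion of large eta lemma} componentwise. I do not foresee any serious obstacle: the only quantitative input is the Nevanlinna bound on $|F_{\mu_i}'-1|$, and the role of the hypothesis~\eqref{le lower bound on tilde omega} is simply to ensure that the interpolating segment remains safely inside $\C^+$ with $\im(\cdot)\ge\im z$, so that this bound is valid along the whole segment.
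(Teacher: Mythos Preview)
Your argument is correct and is in fact more elementary than the paper's. The paper proceeds indirectly: starting from $(\widetilde\omega_1,\widetilde\omega_2)$ it invokes the Newton--Kantorovich theorem to produce a nearby exact solution $(\widehat\omega_1,\widehat\omega_2)$ of $\PP_{\mu_1,\mu_2}=0$, and then verifies that $(\widehat\omega_1,\widehat\omega_2)$ satisfies the analyticity, positivity, and asymptotic conditions of Proposition~\ref{le prop 1} so that uniqueness forces $\widehat\omega_i=\omega_i$. This requires controlling $\mathrm{D}^2\PP$ at $(\widetilde\omega_1,\widetilde\omega_2)$ and then checking $\im\widehat\omega_i\ge\im z$ a posteriori, which is where the full strength of~\eqref{le lower bound on tilde omega} enters. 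Your route instead takes the existence of $(\omega_1,\omega_2)$ as already given by Proposition~\ref{le prop 1}, subtracts the two systems, and linearizes \emph{exactly} via the integral form of the mean value theorem; the off-diagonal entries $\Delta_{ij}$ are then uniformly $O((\im z)^{-2})$ because the entire segment joining $\omega_j(z)$ and $\widetilde\omega_j(z)$ lies in $\{\im(\cdot)\ge\im z\}$. This avoids Newton--Kantorovich, second-derivative bounds, and the identification step altogether; it also makes clear that only the weaker consequence $\im\widetilde\omega_i(z)\ge\im z$ of~\eqref{le lower bound on tilde omega} is needed. The paper's approach has the minor conceptual advantage of not presupposing the solution $(\omega_1,\omega_2)$, but since Proposition~\ref{le prop 1} is already in hand that is not a genuine gain here.
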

\begin{proof}Recall the Nevanlinna representation~\eqref{le neva for F} for $F_{\mu_1}$ and $F_{\mu_2}$. Since $\mu_1$ and $\mu_2$ are compactly supported, we have, as $\im \omega\nearrow \infty$,
 \begin{align}\label{la compact support implication for F}
  F_{\mu_1}(\omega)-\omega=a_{1}+O(|\omega|^{-1})\,,\qquad F_{\mu_2}(\omega)-\omega=a_{2}+O(|\omega|^{-1})\,,
 \end{align}
 with $a_1\equiv a_{F_{\mu_1}}$ and $a_2\equiv a_{F_{\mu_2}}$.
There are thus $\widetilde s_1,\widetilde s_2\,:\,\C^+\to \C$ such that
\begin{align}\label{le ach}
\PP_{\mu_1,\mu_2}(\widetilde\omega_1(z),\widetilde\omega_2(z),z)=\left(\begin{array}{cc}a_{1}+\widetilde s_1(z)-\widetilde\omega_1(z)+z\\a_2+\widetilde s_2(z)-\widetilde\omega_2(z)+z \end{array} \right)=\left(\begin{array}{cc}\widetilde r_1(z)\\ \widetilde r_2(z) \end{array}\right)\,,
\end{align}
with 
\begin{align}
\widetilde s_1(z)=O(|\widetilde\omega_2(z)|^{-1})\,,\qquad\qquad \widetilde s_2(z)=O(|\widetilde\omega_1(z)|^{-1})\,,
\end{align}
as $\im z\nearrow \infty$. It follows immediately that $\widetilde\omega_1(z)=O(\im z)$ and $\widetilde\omega_2(z)=O(\im z)$, as $\im z\nearrow \infty$. Thus, recalling the definition of $\Gamma_{\mu_1,\mu_2}$ in~\eqref{le what stable means}, we get
\begin{align}
 \Gamma_{\mu_1,\mu_2}(\widetilde\omega_1(z),\widetilde\omega_2(z))=1+ O(\eta^{-2})\,,
\end{align}
as $\eta=\im z\nearrow \infty$. In particular, we obtain
\begin{align}
 \|((\mathrm{D}\PP)^{-1}\PP)(\widetilde\omega_1(z),\widetilde\omega_2(z),z)\|&\le \| \Gamma_{\mu_1,\mu_2}(\widetilde\omega_1(z),\widetilde\omega_2(z)) \| \|\PP(\widetilde\omega_1(z),\widetilde\omega_2(z),z)\|\nonumber\\
 &\le 2 \|\widetilde r(z)\|\,,
\end{align}
for $\im z$ sufficiently large. 
From~\eqref{le higher derivatives F finnish} and~\eqref{la compact support implication for F}, we also get
\begin{align}
 |F^{(2)}_{\mu_i}(\omega)|\le \frac{\im F_{\mu_i}(\omega)-\im \omega}{(\im \omega)^2}=O((\im \omega)^{-3})\,,\qquad\qquad \omega\in\C^+\,,\qquad i=1,2\,,
\end{align}
as $\im \omega\nearrow\infty$. Thus the matrix of second derivatives of $\PP$ given by
\begin{align*}
 \mathrm{D}^2\PP(\omega_1,\omega_2)\deq\left(\frac{\partial^2 \PP}{\partial\omega_1^2}(\omega_1,\omega_2,z) \,,\, \frac{\partial^2\PP}{\partial \omega_2^2}(\omega_1,\omega_2,z)\right)=\left(\begin{array}{cc}0& F_{\mu_1}^{(2)}(\omega_2) \\  F_{\mu_2}^{(2)}(\omega_1) & 0\end{array} \right)\,,
\end{align*}
satisfies $\|\mathrm{D}^2\PP(\widetilde{\omega}_1(z),\widetilde{\omega}_2(z))\|=O{(\im z)^{-3}}$, as $\im z\nearrow\infty$. Hence, choosing $ \eta_0>0$ sufficiently large, we achieve that
 \begin{align*}
 s_0\deq 2\|\widetilde r(z)\|\,\|\mathrm{D}^2\PP (\widetilde\omega_1(z),\widetilde\omega_2(z))\|<\frac{1}{2}\,,
 \end{align*}
 on the domain $\{z\in \C^+\,:\,\im z\ge \eta_0\}$. Thus, by the Newton-Kantorovich theorem (see, \eg Theorem~1 in~\cite{FS}), there are for every such $z$ unique $\widehat\omega_1(z),\widehat \omega_2(z)$ such that $\PP_{\mu_1,\mu_2}(\widehat\omega_1(z),\widehat\omega_2(z),z)=0$, with 
\begin{align}\label{le output of NK}
|\widetilde\omega_i(z)-\widehat\omega_i(z)|\le \frac{1-\sqrt{1-2s_0}}{s_0}\,\|\widetilde r(z)\|\le  2\|\widetilde r(z)\|\,,\qquad \qquad i=1,2\,.
\end{align}
Finally, we note that $\im \widehat\omega_1(z)=\im\widehat \omega_1(z)-\im \widetilde \omega_1(z)+\im\widetilde \omega_1\ge \im z$, by~\eqref{le lower bound on tilde omega}, for $\im z\ge \eta_0$. Similarly, $\im \widehat\omega_2(z)\ge \im z$, for $\im z\ge \eta_0$. It further follows that $\Gamma_{\mu_1,\mu_2}(\widehat\omega_1(z),\widehat\omega_2(z))\not=0$, for all $z\in\C^+$ with $\im z\ge \eta_0$, thus $\widehat\omega_1(z)$ and $\widehat\omega_2(z)$ are analytic on $\{z\in \C\,:\im z>\eta_0\}$ since $F_{\mu_1}$ and $F_{\mu_2}$ are. Finally, using~\eqref{la compact support implication for F} with $\omega=\widehat\omega_1$, $\omega=\widehat\omega_2$ respectively, we see that 
\begin{align*}
\lim_{\eta\nearrow\infty}\frac{\im \widehat \omega_1(\ii\eta)}{\ii\eta}=\lim_{\eta\nearrow\infty}\frac{\im \widehat \omega_1(\ii\eta)}{\ii\eta}=1\,.
\end{align*}
Thus, by the uniqueness claim in Proposition~\ref{le prop 1}, $\widehat\omega_1(z),\widehat\omega_2(z)$ agree with $\omega_1(z),\omega_2(z)$ on the domain $\{z\in\C^+\,:\, \im z\ge \eta_0\}$. This proves~\eqref{le conclusion of large eta lemma} from~\eqref{le output of NK}. \qedhere
\end{proof}

\section{Proof of Theorem~\ref{le theorem continuity}}\label{section theorem continuity}
In the setup of Theorem~\ref{le theorem continuity} we have two pairs of probability measures on $\R$, $\mu_\alpha$, $\mu_\beta$ and $\mu_A$, $\mu_B$, where we consider $\mu_\alpha$, $\mu_\beta$ as ``reference'' measures (in the sense that they satisfy the assumptions of Theorem~\ref{le theorem continuity}), while $\mu_A$, $\mu_B$ are arbitrary. Under the assumptions of Theorem~\ref{le theorem continuity} we can apply Theorem~\ref{thm stability} with the choices $\mu_\alpha=\mu_1$ and $\mu_\beta=\mu_2$.

Recall from~\eqref{le domain S} the definition of the domain $\mathcal{S}_{\mathcal{I}}(a,b)$, $a\le b$.
\begin{lemma} \label{cor.080601}
 Let $\mu_A$, $\mu_B$ and $\mu_\alpha$, $\mu_\beta$ be the probability measures from~\eqref{le empirical measures of A and B} and~\eqref{le assumptions convergence empirical measures} satisfying the assumptions of Theorem~\ref{le theorem continuity}. Let $\omega_A,\omega_B$ and $\omega_\alpha,\omega_\beta$ denote the associated subordination functions by Proposition~\ref{le prop 1}. Let $\mathcal{I}\subset\mathcal{B}_{\mu_\alpha\boxplus\mu_\beta}$ be a compact non-empty interval.  Fix $0<\eta_{\mathrm{M}}<\infty$. 
 
 Then there are a (small) constant $b_0>0$ and a (large) constant $K_1<\infty$, both depending on the measures $\mu_\alpha$ and $\mu_\beta$, on the interval $\mathcal{I}$ and on the constant $\eta_{\mathrm{M}}$, such that whenever
 \begin{align}
  \dL(\mu_A,\mu_\alpha)+\dL(\mu_B,\mu_\beta)\le b_0\,,
 \end{align}
holds, then
 \begin{align}\label{le estimate for the deterministic guys}
  |\omega_{A}(z)-\omega_\alpha(z)|&\le K_1\frac{\dL(\mu_A,\mu_\alpha)}{(\Im\omega_\beta(z))^2}+K_1\frac{\dL(\mu_B,\mu_\beta)}{(\im \omega_\alpha(z))^2}\,,\nonumber\\ |\omega_B(z)-\omega_\beta(z)|&\le K_1\frac{\dL(\mu_A,\mu_\alpha)}{(\Im\omega_\beta(z))^2}+K_1\frac{\dL(\mu_B,\mu_\beta)}{(\im \omega_\alpha(z))^2}\,,
 \end{align}
hold uniformly on $\mathcal{S}_{\mathcal{I}}(0,\eta_{\mathrm{M}})$. In particular, choosing $b\le b_0$ sufficiently small and assuming that $\dL(\mu_A,\mu_\alpha)+\dL(\mu_B,\mu_\beta)\le b $, we have, 
 \begin{align}\label{le upper bound on omega AB}
  \max_{z\in\mathcal{S}_{\mathcal{I}}(0,\eta_{\mathrm{M}})}|\omega_A(z)|\le K\,,\qquad  \max_{z\in\mathcal{S}_{\mathcal{I}}(0,\eta_{\mathrm{M}})}|\omega_B(z)|\le K\,,
 \end{align}
 \begin{align}\label{le lower bound on omega AB}
 \min_{z\in\mathcal{S}_{\mathcal{I}}(0,\eta_{\mathrm{M}})}\im\omega_A(z)\ge k\,,\qquad  \min_{z\in\mathcal{S}_{\mathcal{I}}(0,\eta_{\mathrm{M}})}\im\omega_B(z)\ge k\,,
 \end{align}
 where $K$ and $k$ are the constant from Lemma~\ref{le lem upper bound on omega} and Lemma~\ref{le lemma stability}, respectively.

\end{lemma}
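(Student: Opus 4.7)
The plan is to apply the perturbation result Proposition~\ref{le proposition perturbation of system} to the system $\PP_{\mu_\alpha,\mu_\beta}$, choosing the reference pair to be $(\omega_\alpha(z),\omega_\beta(z))$ and the perturbed pair to be $(\omega_A(z),\omega_B(z))$. Theorem~\ref{thm stability} supplies all hypotheses on the unperturbed pair uniformly on $\mathcal{S}_\mathcal{I}(0,\eta_\mathrm{M})$: the lower bounds $\im\omega_\alpha,\im\omega_\beta\ge 2k$, the linear $S$-stability of $\PP_{\mu_\alpha,\mu_\beta}$ at $(\omega_\alpha,\omega_\beta)$, together with the upper bounds $|\omega_\alpha|,|\omega_\beta|\le K/2$ from Lemma~\ref{le lem upper bound on omega}. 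Since $\PP_{\mu_A,\mu_B}(\omega_A(z),\omega_B(z),z)=0$ by definition, evaluating $\PP_{\mu_\alpha,\mu_\beta}$ at the same point produces the residual
\begin{equation*}
\widetilde r(z)\,=\,\PP_{\mu_\alpha,\mu_\beta}\bigl(\omega_A(z),\omega_B(z),z\bigr)\,=\,\begin{pmatrix} F_{\mu_\alpha}(\omega_B(z))-F_{\mu_A}(\omega_B(z)) \\ F_{\mu_\beta}(\omega_A(z))-F_{\mu_B}(\omega_A(z)) \end{pmatrix}.
\end{equation*}

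I would next estimate $\|\widetilde r(z)\|$ in terms of the L\'evy distances. A standard integration-by-parts argument (of the same type used for~\eqref{le direct consequence of continuity}) yields $|m_{\mu_A}(w)-m_{\mu_\alpha}(w)|\le C\dL(\mu_A,\mu_\alpha)/(\im w)^2$ for all $w\in\C^+$, and similarly for $m_{\mu_B}-m_{\mu_\beta}$. Using $F_\mu=-1/m_\mu$ together with the lower bound $|m_{\mu_\alpha}(\omega_\beta(z))|\ge \kappa_1>0$ from the subordination estimate~\eqref{le 44}, and a perturbative lower bound on $|m_{\mu_A}(\omega_B(z))|$ valid whenever $\omega_B$ is close to $\omega_\beta$ and $\dL(\mu_A,\mu_\alpha)$ is small, this converts into
\begin{equation*}
\|\widetilde r(z)\|\,\le\, C'\,\frac{\dL(\mu_A,\mu_\alpha)}{(\im\omega_B(z))^2}+C'\,\frac{\dL(\mu_B,\mu_\beta)}{(\im\omega_A(z))^2}.
\end{equation*}

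The main obstacle is to verify the apriori smallness $|\omega_A(z)-\omega_\alpha(z)|,|\omega_B(z)-\omega_\beta(z)|\le \delta_0$ hypothesized by Proposition~\ref{le proposition perturbation of system} uniformly on $\mathcal{S}_\mathcal{I}(0,\eta_\mathrm{M})$, including at the boundary $\im z=0$. I would handle this by a connectedness/bootstrap argument. First, by the Bercovici--Voiculescu continuity~\eqref{continuity of free additive convolution} combined with Vitali/Montel normal-family arguments, $\omega_A,\omega_B$ converge uniformly to $\omega_\alpha,\omega_\beta$ on the compact set $\mathcal{S}_\mathcal{I}(\eta_*,\eta_\mathrm{M})$ for any fixed $\eta_*\in(0,\eta_\mathrm{M})$ as the L\'evy distances tend to zero; choosing $\eta_*$ small and $b_0$ small enough secures the apriori bound on $\mathcal{S}_\mathcal{I}(\eta_*,\eta_\mathrm{M})$. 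Moreover, $\omega_A,\omega_B$ extend continuously to $\R$ by Proposition~\ref{prop extension} (for $b_0$ small, neither $\mu_A$ nor $\mu_B$ is a single point mass). Set
\begin{equation*}
\mathcal{T}\deq\{z\in\mathcal{S}_\mathcal{I}(0,\eta_\mathrm{M}):|\omega_A(z)-\omega_\alpha(z)|\le \delta_0,\ |\omega_B(z)-\omega_\beta(z)|\le \delta_0\}.
\end{equation*}
Then $\mathcal{T}$ is closed by continuity, nonempty (it contains $\mathcal{S}_\mathcal{I}(\eta_*,\eta_\mathrm{M})$), and open: at any $z_0\in\mathcal{T}$, Proposition~\ref{le proposition perturbation of system} improves the closeness to $\le 2S\|\widetilde r(z_0)\|\le C''\,b_0/k^2\le \delta_0/2$ for $b_0$ small enough, so by continuity a full neighborhood lies in $\mathcal{T}$. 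By connectedness of the closed rectangle $\mathcal{S}_\mathcal{I}(0,\eta_\mathrm{M})$, we conclude $\mathcal{T}=\mathcal{S}_\mathcal{I}(0,\eta_\mathrm{M})$. Combining the resulting $|\omega_A-\omega_\alpha|,|\omega_B-\omega_\beta|\le 2S\|\widetilde r\|$ with the residual estimate, and using $\im\omega_A\ge \im\omega_\alpha-\delta_0\ge \im\omega_\alpha/2$ (and the analogous bound for $\omega_B$) valid since $\delta_0\le k$ and $\im\omega_\alpha,\im\omega_\beta\ge 2k$, yields~\eqref{le estimate for the deterministic guys}. The bounds~\eqref{le upper bound on omega AB} and~\eqref{le lower bound on omega AB} then follow immediately from Lemma~\ref{le lem upper bound on omega}, Lemma~\ref{le lemma stability}, and~\eqref{le estimate for the deterministic guys} upon choosing $b$ sufficiently small.
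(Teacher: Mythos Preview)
Your argument is correct but takes a genuinely different route from the paper. The key contrast lies in which system is treated as ``reference'' and which as ``perturbed.'' You evaluate $\PP_{\mu_\alpha,\mu_\beta}$ at the \emph{unknown} point $(\omega_A,\omega_B)$ and invoke Proposition~\ref{le proposition perturbation of system}; this forces you into the bootstrap/connectedness step to secure the apriori closeness~\eqref{le apriori closeness}, and your residual bound initially appears in terms of $\im\omega_A,\im\omega_B$, requiring a final conversion. The paper does the reverse: it evaluates $\PP_{\mu_A,\mu_B}$ at the \emph{known} point $(\omega_\alpha,\omega_\beta)$, so the residual $r(z)$ involves only $\im\omega_\alpha,\im\omega_\beta$ (already $\ge 2k$ by Theorem~\ref{thm stability}), and the stability constant $\Gamma_{\mu_A,\mu_B}(\omega_\alpha,\omega_\beta)$ is controlled by $\Gamma_{\mu_\alpha,\mu_\beta}(\omega_\alpha,\omega_\beta)$ plus a small error. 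Newton--Kantorovich then produces a nearby solution $(\widehat\omega_A,\widehat\omega_B)\in(\C^+)^2$ of $\PP_{\mu_A,\mu_B}=0$, and the \emph{global uniqueness} of subordination functions from Proposition~\ref{le prop 1} forces $(\widehat\omega_A,\widehat\omega_B)=(\omega_A,\omega_B)$ --- no bootstrap, no initial seed needed. The paper explicitly flags this distinction in the remark following the lemma. Your route is heavier (the normal--family seed plus the open/closed argument) but is self-contained within the Section~\ref{section perturbation} machinery; the paper's route is shorter, yields~\eqref{le estimate for the deterministic guys} directly in the stated form, and makes essential use of the global uniqueness that your approach never invokes.
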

\begin{remark}Armed with the conclusions of Theorem~\ref{thm stability}, our proof follows closely the arguments of~\cite{Kargin2013}. We further remark that the main argument in the proof of Lemma~\ref{cor.080601} is different from the ones given in Section~\ref{section perturbation}: it crucially relies on the global uniqueness of solutions on the upper half plane for both systems, $\PP_{\mu_\alpha,\mu_\beta}(\omega_\alpha,\omega_\beta,z)=0$ and $\PP_{\mu_A,\mu_B}(\omega_A,\omega_B,z)=0$, asserted by Proposition~\ref{le prop 1}.
\end{remark}

\begin{proof}[Proof of Lemma~\ref{cor.080601}] We first write the system $\PP_{\mu_\alpha,\mu_\beta}(\omega_\alpha(z),\omega_\beta(z),z)=0$ as 
\begin{align*}
\PP_{\mu_A,\mu_B}(\omega_\alpha(z),\omega_\beta(z),z)=r(z)\,, \qquad \qquad z\in\C^+\,,
\end{align*}
with
\begin{align}
 r(z)\equiv\left(\begin{array}{c}r_A(z)\\ r_B(z)\end{array} \right)\deq\left(\begin{array}{c}F_{\mu_A}(\omega_\beta(z))-F_{\mu_\alpha}(\omega_\beta(z)) \\ F_{\mu_B}(\omega_\alpha(z))-F_{\mu_\beta}(\omega_\alpha(z))\end{array} \right)\,.
\end{align}
From Lemma \ref{le lemma stability}, we know that the imaginary parts of the subordination functions $\omega_\alpha,\omega_\beta$ are uniformly bounded from below on $\mathcal{S}_{\mathcal{I}}(0,\eta_\mathrm{M})$. Next, integration by parts reveals that for any probability measures $\mu_1$ and $\mu_2$,
\begin{align}\label{le partial integration result}
 |m_{\mu_1}(z)-m_{\mu_2}(z)|\le c\frac{\dL(\mu_1,\mu_2)}{\im z}\left(1+\frac{1}{\im z}\right)\,,\qquad\qquad z\in\C^+\,,
\end{align}
with some numerical constant $c$; see, \eg \cite{Kargin}. Thus,
 \begin{align}
  |F_{\mu_A}(\omega_\beta(z))-F_{\mu_\alpha}(\omega_\beta(z))|=\frac{ |m_{\mu_A}(\omega_\beta(z)))-m_{\mu_\alpha}(\omega_\beta(z))|}{|m_{\mu_A}(\omega_\beta(z)) m_{\mu_\alpha}(\omega_\beta(z))|}\le C \frac{\dL(\mu_A,\mu_\alpha)}{(\im \omega_\beta(z))^2}\,,
 \end{align}
with a new constant $C$ that depends on the lower bound of $ \im m_{\mu_\alpha}(\omega_\beta(z)) = \im m_{\mu_\alpha\boxplus\mu_\beta}(z)$ which is strictly positive on $\mathcal{S}_{\mathcal{I}}(0,\eta_{\mathrm{M}})$; \cf~\eqref{le 44}.
 Here we used 
\begin{align*}
 \im m_{\mu_A}(\omega_\beta(z))\ge\im m_{\mu_\alpha}(\omega_\beta(z))-| \im m_{\mu_A}(\omega_\beta(z))- \im m_{\mu_\alpha}(\omega_\beta(z))|\ge\frac{1}{2} \im m_{\mu_\alpha}(\omega_\beta(z)) \,,
\end{align*}
as follows from~\eqref{le partial integration result} for small enough $\dL(\mu_A,\mu_\alpha)\le b_0$. Repeating the argument with  the r\^oles of $A$ and $B$ interchanged, we arrive at
 \begin{align} \label{080625}
  |r_A(z)| \le C\frac{\dL(\mu_A,\mu_\alpha)}{(\im \omega_\beta(z))^2}\,,\quad\qquad|r_B(z)|\le C\frac{\dL(\mu_B,\mu_\beta)}{(\im \omega_\alpha(z))^2}\,,\qquad\qquad z\in\mathcal{S}_{\mathcal{I}}(0,\eta_{\mathrm{M}})\,,
 \end{align}
 for some constant $C$. Recalling the definition of $\Gamma$ in~\eqref{le what stable means}, we get for sufficiently small $b_0$,
 \begin{align}\label{zhigang today}
\Gamma_{\mu_A,\mu_B}(\omega_\alpha,\omega_\beta)\leq 2 \Gamma_{\mu_\alpha,\mu_\beta}(\omega_\alpha,\omega_\beta)\leq 2S\,,
 \end{align}
 where $S$ is from Lemma~\ref{lemma linear stability}, and where we also use Lemma~\ref{le lemma stability} and the assumption $\dL(\mu_A,\mu_\alpha)+\dL(\mu_{B},\mu_\beta)\le b_0$. The Newton--Kantorovich theorem then implies (\cf the proof of Lemma~\ref{le lemma large eta} for a similar application) that there are $\widehat{\omega}_A(z)$, $\widehat{\omega}_B(z)$ satisfying
 \begin{align}
  \PP_{\mu_A,\mu_B}(\widehat{\omega}_A(z),\widehat{\omega}_B(z),z)=0\,,\qquad\qquad z\in\mathcal{S}_{\mathcal{I}}(0,\eta_{\mathrm{M}})\,,
 \end{align}
and
 \begin{align}
 |\omega_\alpha(z)-\widehat{\omega}_A(z)|\leq 2 \|r(z)\|\,,\qquad \qquad  |\omega_\beta(z)-\widehat{\omega}_B(z)|\leq 2 \|r(z)\|\,,\label{082050}
 \end{align}
 $z\in\mathcal{S}_{\mathcal{I}}(0,\eta_{\mathrm{M}})$. Invoking~\eqref{080625},~\eqref{082050} and Lemma~\ref{le lemma stability}, we see that $\widehat{\omega}_A(z)\in\C^+$ and $\widehat{\omega}_B(z)\in\C^+$, for any $z\in\mathcal{S}_{\mathcal{I}}(0,\eta_{\mathrm{M}})$ if $b_0$ is sufficiently small. Yet, by the global uniqueness of solutions asserted in Proposition~\ref{le prop 1}, we must have $\widehat\omega_A(z)=\omega_A(z)$, $\widehat\omega_B(z)=\omega_B(z)$, $z\in\C^+$. Together with~\eqref{082050} and  ~\eqref{080625} this  implies~\eqref{le estimate for the deterministic guys} and concludes the proof. Then, choosing $b_0$ sufficiently small,~\eqref{le upper bound on omega AB} and~\eqref{le lower bound on omega AB} are direct consequences of~\eqref{le estimate for the deterministic guys}, Lemma~\ref{le lem upper bound on omega} and Lemma~\ref{le lemma stability}.\qedhere

\end{proof}

With the aid of Lemma~\ref{lemma linear stability}, we prove the stability of the system $\PP_{\mu_A,\mu_B}(\omega_A,\omega_B,z)=0$.

\begin{corollary} \label{cor081501} Under the assumptions of Lemma~\ref{cor.080601}, there is a (small) constant $b_1>0$, depending on the measures $\mu_\alpha$ and $\mu_\beta$, on the interval $\mathcal{I}$ and on the constant $\eta_{\mathrm{M}}$, such that
\begin{align}\label{le zgs condition}
  \dL(\mu_A,\mu_\alpha)+\dL(\mu_B,\mu_\beta)\le b_1
\end{align}
implies
\begin{align}\label{le linear stability equation BIS}
\max_{z\in\mathcal{S}_{\mathcal{I}}(0,\eta_{\mathrm{M}})}\Gamma_{\mu_A,\mu_B}(\omega_A(z),\omega_B(z))\le 2 S
\end{align}
and 
\begin{align}\label{la derivative on omegaAB}
 \max_{z\in\mathcal{S}_{\mathcal{I}}(0,\eta_{\mathrm{M}})}|\omega'_A(z)|\le 4S\,,\qquad\qquad \max_{z\in\mathcal{S}_{\mathcal{I}}(0,\eta_{\mathrm{M}})}|\omega'_B(z)|\le 4S\,,
\end{align}
where $\omega_A(z)$, $\omega_B(z)$ satisfy $\PP_{\mu_A,\mu_B}(\omega_A(z),\omega_B(z),z)=0$ and $S$ is the constant in Lemma~\ref{lemma linear stability}.
\end{corollary}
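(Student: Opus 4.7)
The plan is to obtain~\eqref{le linear stability equation BIS} by a standard matrix-perturbation argument based on Lemma~\ref{cor.080601} and the stability bound from Lemma~\ref{lemma linear stability}, and then deduce~\eqref{la derivative on omegaAB} by differentiating the subordination system in~$z$ and applying the linear stability estimate just obtained.

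I would first choose $b_1\le b_0$ small enough that Lemma~\ref{cor.080601} applies. By~\eqref{le estimate for the deterministic guys}, together with~\eqref{le upper bound on omega AB}--\eqref{le lower bound on omega AB}, the functions $z\mapsto(\omega_A(z),\omega_B(z))$ map $\mathcal{S}_{\mathcal{I}}(0,\eta_{\mathrm{M}})$ into a fixed compact set $\mathcal{K}_*\subset\C^+$ with $\im\omega\ge k$ and $|\omega|\le K$, and moreover
\[
 \max_{z\in\mathcal{S}_{\mathcal{I}}(0,\eta_{\mathrm{M}})}\big(|\omega_A(z)-\omega_\alpha(z)|+|\omega_B(z)-\omega_\beta(z)|\big)\le C\big(\dL(\mu_A,\mu_\alpha)+\dL(\mu_B,\mu_\beta)\big).
\]
Lemma~\ref{lemma linear stability} gives $\|\mathrm{D}\PP_{\mu_\alpha,\mu_\beta}(\omega_\alpha(z),\omega_\beta(z))^{-1}\|\le S$ uniformly on $\mathcal{S}_{\mathcal{I}}(0,\eta_{\mathrm{M}})$.

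Next, I would bound the off-diagonal differences of the Jacobians,
\[
 \Delta(z)\deq\mathrm{D}\PP_{\mu_A,\mu_B}(\omega_A(z),\omega_B(z))-\mathrm{D}\PP_{\mu_\alpha,\mu_\beta}(\omega_\alpha(z),\omega_\beta(z)),
\]
whose only nonzero entries are $F_{\mu_A}'(\omega_B)-F_{\mu_\alpha}'(\omega_\beta)$ and $F_{\mu_B}'(\omega_A)-F_{\mu_\beta}'(\omega_\alpha)$. I split
\[
 F_{\mu_A}'(\omega_B)-F_{\mu_\alpha}'(\omega_\beta)=\bigl[F_{\mu_A}'(\omega_B)-F_{\mu_\alpha}'(\omega_B)\bigr]+\bigl[F_{\mu_\alpha}'(\omega_B)-F_{\mu_\alpha}'(\omega_\beta)\bigr].
\]
The second bracket is controlled by $|\omega_B-\omega_\beta|\cdot\sup_{\omega\in\mathcal{K}_*}|F_{\mu_\alpha}''(\omega)|$; the sup is finite by~\eqref{le higher derivatives F finnish} since $\im\omega\ge k$ on $\mathcal{K}_*$. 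For the first bracket, I enlarge slightly to a compact set $\mathcal{K}_*'$ with $\im\omega\ge k/2$ containing a $k/4$-neighborhood of $\mathcal{K}_*$. On $\mathcal{K}_*'$, the identity $F_\mu=-1/m_\mu$, the L\'evy-distance estimate~\eqref{le partial integration result}, and a uniform lower bound on $|m_{\mu_\alpha}|$ (which transfers to $|m_{\mu_A}|$ for $b_1$ small, by~\eqref{le partial integration result}) give $|F_{\mu_A}(\omega)-F_{\mu_\alpha}(\omega)|\le C\,\dL(\mu_A,\mu_\alpha)$ uniformly on $\mathcal{K}_*'$. Cauchy's integral formula on circles of radius $k/4$ then yields $|F_{\mu_A}'(\omega)-F_{\mu_\alpha}'(\omega)|\le C'\dL(\mu_A,\mu_\alpha)$ on $\mathcal{K}_*$. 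By symmetry the analogous bound holds with $A\leftrightarrow B$, $\alpha\leftrightarrow\beta$. Altogether,
\[
\max_{z\in\mathcal{S}_{\mathcal{I}}(0,\eta_{\mathrm{M}})}\|\Delta(z)\|\le C''\bigl(\dL(\mu_A,\mu_\alpha)+\dL(\mu_B,\mu_\beta)\bigr).
\]
Shrinking $b_1$ further so that the right-hand side is $\le 1/(2S)$, the standard Neumann-series perturbation estimate $\|N^{-1}\|\le 2\|M^{-1}\|$ whenever $\|M^{-1}\|\,\|N-M\|\le 1/2$ yields~\eqref{le linear stability equation BIS}.

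For the derivative bounds, I differentiate $\PP_{\mu_A,\mu_B}(\omega_A(z),\omega_B(z),z)=0$ in $z$ exactly as in~\eqref{le baby system}, obtaining
\[
\mathrm{D}\PP_{\mu_A,\mu_B}(\omega_A(z),\omega_B(z))\begin{pmatrix}\omega_A'(z)\\ \omega_B'(z)\end{pmatrix}=\begin{pmatrix}-1\\ -1\end{pmatrix}.
\]
Multiplying by the inverse and applying~\eqref{le linear stability equation BIS}, we conclude $|\omega_A'(z)|,|\omega_B'(z)|\le 2S\sqrt{2}\le 4S$, which is~\eqref{la derivative on omegaAB}. The main technical obstacle is the passage from the L\'evy-distance control of Stieltjes transforms (which only controls $F$) to control of $F'$ at a perturbed point; this is handled by first confining $\omega_A,\omega_B$ to a compact subset of $\C^+$ via Lemma~\ref{cor.080601}, and then invoking Cauchy's formula on an enlargement that still sits strictly inside $\C^+$.
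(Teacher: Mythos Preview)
Your proof is correct. The paper's own argument is much terser: it simply says to rerun the proof of Lemma~\ref{lemma linear stability} with $(\mu_A,\mu_B,\omega_A,\omega_B)$ in place of $(\mu_1,\mu_2,\omega_1,\omega_2)$, using the bounds~\eqref{le upper bound on omega AB}--\eqref{le lower bound on omega AB} as input, and then deduces~\eqref{la derivative on omegaAB} from~\eqref{le baby system} as in Lemma~\ref{lemma linear stability}.

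Your route is genuinely different in one respect. Instead of repeating the determinant argument of Lemma~\ref{lemma linear stability} (which hinges on the strict inequality~\eqref{le second stability constant} and hence on the constant $\widetilde\sigma(\mu_A,\cdot)$ attached to the perturbed measure), you compare the two Jacobians directly and invoke a Neumann-series bound. This has the advantage that the constant $2S$ with $S$ from Lemma~\ref{lemma linear stability} falls out immediately, without having to argue that the strictness constant $\widetilde\sigma$ of~\eqref{le second stability constant} is itself stable under small L\'evy perturbations of the measure (and without any hypothesis on the number of support points of $\mu_A$ or $\mu_B$). The price is the extra step of passing from control of $F_{\mu_A}-F_{\mu_\alpha}$ to control of $F_{\mu_A}'-F_{\mu_\alpha}'$ via Cauchy's estimate on a slightly enlarged compact set, which you handle correctly. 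The derivative bound via differentiating the subordination system is identical to the paper's argument (up to the harmless sign of the right-hand side).
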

\begin{proof}
Let $\Gamma\equiv \Gamma_{\mu_A,\mu_B}(\omega_A(z),\omega_B(z))$. Analogously to~\eqref{le gamma for the second}, we have
\small
\begin{align}
 \Gamma=\frac{1}{\left|1-(F_{\mu_A}'(\omega_B)-1) (F_{\mu_B}'(\omega_A)-1) \right|}\left\|\left( \begin{array}{cc}
 -1& -F_{\mu_A}'(\omega_B(z))+1  \\
 -F_{\mu_B}'(\omega_A(z))+1 & -1\\
   \end{array} \right) \right\|\,.
\end{align}\normalsize
Using the bounds~\eqref{le upper bound on omega AB} and~\eqref{le lower bound on omega AB} for sufficiently small $b_1$, we follow, mutatis mutandis, the proof of Lemma~\ref{lemma linear stability} to get~\eqref{le linear stability equation BIS}. The estimates in~\eqref{la derivative on omegaAB} then follow as in Lemma~\ref{lemma linear stability}.
  \qedhere\end{proof}
 We are now ready to complete the proof of Theorem~\ref{le theorem continuity}.

 \begin{proof}[Proof of Theorem~\ref{le theorem continuity}]
 Recall that $m_{\mu_A\boxplus \mu_B}(z)=m_{\mu_A}(\omega_B(z))$, $z\in\C^+$. We first note that 
 \begin{align*}
|m_{\mu_A}(\omega_B(z))-m_{\mu_\alpha}(\omega_B(z))|&\le C\frac{\dL(\mu_A,\mu_\alpha)}{\im \omega_\beta(z)}\left(1+\frac{1}{\im \omega_\beta(z)}\right)\,,
\end{align*}
for some numerical constant $C$; \cf~\eqref{le partial integration result}. Thus using~\eqref{le lower bound on omega AB} we get
\begin{align*}
|m_{\mu_A}(\omega_B(z))-m_{\mu_\alpha}(\omega_B(z))|\le K_2 k^{-2} \dL(\mu_A,\mu_\alpha)\,,\qquad\qquad z\in\mathcal{S}_{\mathcal{I}}(0,\eta_{\mathrm{M}})\,,
 \end{align*}
for some numerical constant $K_2$. Choosing $b$ as in Lemma~\ref{cor.080601} and assuming that $\dL(\mu_A,\mu_\alpha)+\dL(\mu_A,\mu_\beta)\le b$,  we get from~\eqref{le estimate for the deterministic guys} that
\begin{align*}
|m_{\mu_\alpha}(\omega_B(z))-m_{\mu_\alpha}(\omega_\beta(z))|\le K_1k^{-4}((\dL(\mu_A,\mu_\alpha)+\dL(\mu_A,\mu_\beta)) \,, \qquad z\in\mathcal{S}_{\mathcal{I}}(0,\eta_{\mathrm{M}})\,.
 \end{align*}
 Setting $Z\deq K_1k^{-4}+K_2k^{-2}$ we thus obtain~\eqref{le ksv statment}.
\qedhere\end{proof}
 \begin{remark}\label{la cor bound on mAB}
Note that under the assumptions of Theorem~\ref{le theorem continuity}, we have, for $\dL(\mu_A,\mu_\alpha)+\dL(\mu_A,\mu_\beta)\le b$, the bounds
 \begin{align}\label{le bound on mAB stuff}
  \kappa_1/2\le|m_{\mu_A\boxplus\mu_B}(z)|\le 1/k\,,
\end{align}
uniformly on $\mathcal{S}_{\mathcal{I}}(0,\eta_{\mathrm{M}})$ with $\kappa_1>0$ from~\eqref{le 44} and $k>0$ from~\eqref{le lower bound on omega AB}. 
 \end{remark}

\section{Proof of Theorem \ref{thm041801}}\label{le section proof of theorem matrix}
 Before we immerse into the details of the proof of Theorem~\ref{thm041801}, we outline how Theorem~\ref{thm stability} and the local stability results of Section~\ref{section perturbation} in combination with concentration estimates for the unitary groups lead to the local law in~\eqref{le local convergence of m}.

\subsection{Outline of proof} \label{s.2.4}
We briefly outline of our proof when $U$ is Haar distributed on~$U(N)$. Since we are interested in the tracial quantity $m_H$ of $H=A+UBU^*$, we may replace $H$ by the matrix
\begin{align}
 \widetilde H\deq VAV^*+UBU^*\,,
\end{align}
where $V$ is another Haar unitary independent from~$U$. By cyclicity of the trace we have~$m_H=m_{\widetilde H}$ and we study $m_{\widetilde H}$ below. We emphasize that this replacement is a convenient technicality which is not essential to our proof.

Using the shorthand
\begin{align}
\widetilde{A}\deq VAV^*\,,\quad\qquad \widetilde{B}\deq UBU^* \,,\label{071905}
\end{align}
we introduce the Green functions 
\begin{eqnarray}\label{le greens functions}
G_{\widetilde{A}}(z)\deq(\widetilde{A}-z)^{-1},\qquad G_{\widetilde{B}}(z)\deq(\widetilde{B}-z)^{-1}\,,\qquad\qquad z\in\C^+\,.
\end{eqnarray}
For a given $N\times N$ matrix~$Q$, we introduce the function
\begin{align}\label{le ff}
f_Q(z)\deq\ntr QG_{\widetilde H}(z)\,,\qquad\qquad z\in\C^+\,,
\end{align}
where $G_{\widetilde H}=(\widetilde H-z)^{-1}$ is the Green function of ${\widetilde H}$. We define  the {\it approximate subordination functions}, $\omega_A^c$ and $\omega_B^c$, by setting
\begin{align}\label{060104}
\omega_A^c(z)\deq z-\frac{\mathbb{E} f_{\widetilde{A}}(z)}{\mathbb{E} m_{\widetilde H}(z)}\,,\qquad \omega_B^c(z)\deq z-\frac{\mathbb{E} f_{\widetilde{B}}(z)}{\mathbb{E} m_{\widetilde H}(z)}\,,\quad\qquad 
 z\in\C^+\,,
\end{align}
where the expectation $\mathbb{E}$ is with respect to both Haar unitaries $U$ and $V$. From
the identity $({\widetilde H}-z)G_{\widetilde H}(z)=1$, $z\in\C^+$, we then obtain the relation
\begin{align}\label{le third equation}	
\omega_A^c(z)+\omega_B^c(z)-z=-\frac{1}{\E m_{\widetilde H}(z)}\,,\qquad\qquad z\in\C^+\,,
\end{align}
reminiscent to  (\cf \eqref{le definiting equations}--\eqref{le kkv})
$$
    \omega_A (z)+\omega_B (z)-z=-\frac{1}{ m_{A\boxplus B}(z)}\,,\qquad\qquad z\in\C^+\,.
$$
For the proof of Theorem~\ref{thm041801}, we decompose
\begin{eqnarray}
m_{\widetilde H}(z)-m_{A\boxplus B}(z)=\big(m_{\widetilde H}(z)-\mathbb{E}m_{\widetilde H}(z)\big)+\big(\mathbb{E}m_{\widetilde H}(z)-m_{A\boxplus B}(z)\big)\,, \label{071720}
\end{eqnarray}
where we abbreviate $m_{A \boxplus B}\equiv m_{\mu_{A}\boxplus\mu_{B}}$. To control the fluctuation part, $m_{\widetilde H}(z)-\mathbb{E}m_{\widetilde H}(z)$, we rely on the Gromov--Milman concentration inequality~\cite{GM83} for the unitary group; see~\eqref{070501} below. To control the deterministic part, we first note that, by~\eqref{le third equation} and $m_{A\boxplus B}(z)=m_{A}(\omega_B((z))$, bounding $|\mathbb{E}m_{\widetilde H}(z)-m_{A\boxplus B}(z)|$ amounts to bounding $|\omega_A^c(z)-\omega_A(z)|$ and $|\omega_B^c(z)-\omega_B(z)|$. We then show that $\omega_A^c(z)$ and $\omega_B^c(z)$ are both in the upper-half plane and satisfy
\begin{align}
\PP_{\mu_A,\mu_B}(\omega_A^c(z),\omega_B^c(z),z)=r(z)\,,\qquad\qquad\quad\; z\in \mathcal{S}_{\mathcal{I}}(\eta_{\mathrm{m}}, \eta_{\mathrm{M}})\,, \label{081201}
\end{align}
for some small error $r(z)\in\C^+$, \ie we consider~\eqref{081201} as a perturbation of the system~$
\PP_{\mu_A,\mu_B}(\omega_A(z),\omega_B(z),z)=0$; 
\cf~\eqref{le H system defs}.
 The formal derivation of~\eqref{081201} goes back to Pastur and Vasilchuk~\cite{VP}. Using Proposition~\ref{le proposition perturbation of system} (with rough a priori estimates on $|\omega_A^c(z)-\omega_A(z)|$ and $|\omega_B^c(z)-\omega_B(z)|$ obtained from the continuity argument below) and stability results of Theorem~\ref{thm stability} and of Section~\ref{section theorem continuity}, we then bound $|\omega_A^c(z)-\omega_A(z)|$ and $|\omega_B^c(z)-\omega_B(z)|$ in terms of $r(z)$.

In sum, for fixed $z\in\C^+$, our proof includes two parts: $(i)$ estimation of the error~$r(z)$ in~\eqref{081201} and $(ii)$ concentration for $m_{\widetilde H}(z)$ around $\mathbb{E}m_{\widetilde H}(z)$. Both parts
rely on the estimates
\begin{align}
\mathbb{E}m_{\widetilde H}(z)\,,\, \omega_A^c(z)\,,\, \omega_B^c(z)\sim 1\,,\quad \Im\omega^c_A(z)\,,\, \Im \omega^c_B(z)\gtrsim 1\,,\qquad z\in \mathcal{S}_{\mathcal{I}}(\eta_{\mathrm{m}}, \eta_{\mathrm{M}})\,. \label{081205}
\end{align}
Note that the quantities in~\eqref{081205} are obtained from the Green function of ${\widetilde H}$ by averaging with respect to the Haar measure. Similar bounds for $m_{A\boxplus B}$, $\omega_A$ and $\omega_B$ were obtained in Section~\ref{section theorem continuity}. These latter quantities are defined directly from~$\mu_A$ and~$\mu_B$ via~Proposition~\ref{le prop 1}

To establish~\eqref{081205}, we use a similar continuity argument as was used for Wigner matrices in~\cite{EYY}: For $\im z=\eta_{\mathrm{M}}$ sufficiently large, the estimates in ~\eqref{081205} directly follow from definitions. For $z=E+\ii\eta$, with $E\in \mathcal{I}$ fixed, we decrease $\eta=\eta_{\mathrm{M}}$ down to $\eta=\eta_{\mathrm{m}}$ in steps of size $O(N^{-5})$, where, at each step, we invoke parts~$(i)$ and $(ii)$. However, a direct application of the  Gromov--Milman concentration inequality for part $(i)$ does not allow to push $\eta$ below the mesoscopic scale $\eta= N^{ -1/2}$. Indeed, the Gromov--Milman inequality is effective if $\mathcal{L}^{2}/N=o(1)$, where~$\mathcal{L}$ is the Lipschitz constant of $m_{{\widetilde H}}(z)$ with respect to the Haar unitary $V$. It is roughly bounded by $\sqrt{\ntr |G_{\widetilde H}(z)|^4/N}$, which in turn is trivially bounded by $1/\sqrt{N\eta^4}$, giving the $\eta\ge N^{-1/2+\gamma}$, $\gamma>0$, threshold. However, in reality, the random quantity
$\sqrt{\ntr |G_{\widetilde H}(z)|^4/N}$ is typically of order $1/\sqrt{N\eta^3}$ as follows by combining the deterministic estimate $\ntr |G_{\widetilde H}(z)|^4\leq  \eta^{-3}\Im m_{\widetilde H}(z)$ with a probabilistic order one bound for~$\im m_{\widetilde H}(z)$. 

Our key novelty here is to capitalize on this latter information. We introduce a smooth cutoff that regularizes $m_{\widetilde H}(z)$ and then apply the Gromov--Milman inequality for this regularized quantity. With the bound  $1/\sqrt{N\eta^3}$ for the Lipschitz constant, we get concentration estimates down to scales~$\eta\ge N^{-2/3+\gamma}$, $\gamma>0$. 

\subsubsection{Notation}
The following notation for high-probability estimates is suited for our purposes. A slightly different form was first used in~\cite{EKY}.
\begin{definition}\label{definition of stochastic domination}
Let
\begin{align}
 X=(X^{(N)}(v)\,:\, N\in\N\,, v\in \mathcal{V}^{(N)})\,,\qquad\qquad Y=(Y^{(N)}(v)\,:\, N\in\N\,,\,v\in \mathcal{V}^{(N)})
\end{align}
be two families of nonnegative random variables where $\mathcal{V}^{(N)}$ is a possibly $N$-dependent parameter set. We say that $Y$ stochastically dominates $X$, uniformly in~$v$, if for all (small) $\epsilon>0$ and (large) $D>0$,
\begin{align}
 \P\,\bigg(\bigcup_{v\in \mathcal{V}^{(N)}}\bigg\{X^{(N)}(v)>N^{\epsilon} Y^{(N)}(v) \bigg\}\bigg)\le N^{-D}\,, \label{080630}
\end{align}
for sufficiently large $N\ge N_0(\epsilon,D)$. If $Y$ stochastically dominates $X$, uniformly in~$v$, we write $X \prec Y$. 
If we wish to indicate the set $\mathcal{V}^{(N)}$ explicitly, we write that
$X(v) \prec Y(v)$ for all $v\in \mathcal{V}^{(N)}$.
\end{definition}

\subsection{Localized Gromov--Milman concentration estimate}\label{la concentration for the random part}
 In this subsection, we derive concentration bounds for some key tracial quantities. They are tailored for the continuity argument of Subsection~\ref{la boostrapping subsection} used to complete the proof of Theorem~\ref{thm041801}. The argument works with $U$, $V$ independent and both Haar distributed on $U(N)$ or on $O(N)$. Below, $\mathbb{E}$ denotes the expectation with respect Haar measure.

 In the rest of this section, we let $\mathcal{I}\subset\mathcal{B}_{\mu_\alpha\boxplus\mu_\beta}$ denote the compact non-empty subset fixed in Theorem~\ref{thm041801}. Also recall from Theorem~\ref{thm041801} that we set $\eta_m=N^{-2/3+\gamma}$, $\gamma>0$. Below we choose the constant $\eta_M\sim 1$ to be sufficiently large at first, but from the proof it will be clear that we can eventually choose $0<\eta_{\mathrm{M}}<\infty$ arbitrary. Recall from~\eqref{le ff} the notation~$f_Q$, where~$Q$ is an arbitrary $N\times N$ matrix.

\begin{proposition}\label{prop041901}
 Let $Q$ be a given $N\times N$ deterministic matrix with $\|Q\|\lesssim 1$. Fix $E\in \mathcal{I}$ and $\widehat\eta\in[\eta_{\mathrm{m}},\eta_{\mathrm{M}}]$. Then
\begin{eqnarray}
\Im m_{\widetilde H}(E+\ii \eta)\prec 1\,,\qquad \qquad\forall  \eta\in[\widehat{\eta}, \eta_{\mathrm{M}}]\,, \label{041816}
\end{eqnarray} 
implies the concentration bound
\begin{align}
|f_{VQV^*}(E+\ii\widehat{\eta})-\mathbb{E} f_{VQV^*}(E+\ii\widehat{\eta})|\prec \frac{1}{\sqrt{N^2\widehat{\eta}^{3}}}\,. \label{041815}
\end{align}
The same concentration holds with $VQV^*$ replaced by $UQU^*$.
\end{proposition}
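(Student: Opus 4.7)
The plan is to apply the Gromov--Milman concentration inequality on the Haar group $U(N)$ (or $O(N)$) to the function $V\mapsto f_{VQV^*}(E+\ii\widehat\eta)$, with the main novelty being a localization step that exploits the hypothesis. Without localization, the pointwise Lipschitz constant of this function with respect to the Hilbert--Schmidt metric is random and controlled by $\Im m_{\widetilde H}(E+\ii\widehat\eta)$; a worst-case estimate using only $\|G_{\widetilde H}(z)\|\le 1/\widehat\eta$ would yield a Lipschitz constant of order $1/\sqrt{N\widehat\eta^4}$, which gives concentration only down to $\widehat\eta\gg N^{-1/2}$. The key observation, coming from the identity $\ntr |G_{\widetilde H}|^4\le \widehat\eta^{-3}\Im m_{\widetilde H}$, is that the a priori bound $\Im m_{\widetilde H}\prec 1$ effectively improves this to order $\sqrt{\Im m_{\widetilde H}/(N\widehat\eta^3)}$, and this is what pushes the concentration scale down to $\widehat\eta\gg N^{-2/3+\gamma}$.

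First I would compute the directional derivative of $f_{VQV^*}(E+\ii\widehat\eta)$ along the flow $V_t=\e{\ii tX}V$ on the Lie algebra, using $\partial_t(VAV^*)=\ii[X,VAV^*]$ and the resolvent identity $\partial_t G_{\widetilde H}=-\ii G_{\widetilde H}[X,VAV^*]G_{\widetilde H}$. Trace cyclicity then gives $\partial_t f_{VQV^*}|_{t=0}=\tfrac{\ii}{N}\ntr (XM)$ with
\begin{equation*}
M\deq [VQV^*,G_{\widetilde H}]+[G_{\widetilde H}VQV^*G_{\widetilde H},\,VAV^*].
\end{equation*}
Bounding each commutator via $\|[Y,Z]\|_2\le 2\|Y\|\,\|Z\|_2$, together with $\|G_{\widetilde H}\|_2^2=N\Im m_{\widetilde H}/\widehat\eta$ and the eigenbasis estimate $\|G_{\widetilde H}YG_{\widetilde H}\|_2^2\le \widehat\eta^{-2}\|Y\|^2\|G_{\widetilde H}\|_2^2$, I expect the pointwise Lipschitz bound
\begin{equation*}
\mathcal{L}(V)=\tfrac{\|M\|_2}{N}\;\lesssim\; \|Q\|(1+\|A\|)\sqrt{\tfrac{\Im m_{\widetilde H}(E+\ii\widehat\eta)}{N\widehat\eta^3}}.
\end{equation*}

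To turn this random bound into a deterministic one, I would introduce a smooth cutoff of the form
\begin{equation*}
\chi(V)\deq\phi\!\Big(K^{-1}\!\sup \Im m_{\widetilde H}(V)(E+\ii\eta)\Big),
\end{equation*}
where the supremum is taken over a polynomial-size grid in $[\widehat\eta,\eta_{\mathrm M}]$ (equivalent to the continuum supremum up to negligible error by smoothness in $\eta$), $K\deq N^{\epsilon/2}$ for small $\epsilon>0$, and $\phi\in C^\infty_c(\R)$ equals $1$ on $[0,1]$ and vanishes on $[2,\infty)$. An analogous gradient computation produces $\|\nabla_V\Im m_{\widetilde H}(\cdot)(E+\ii\eta)\|_2\lesssim \sqrt{\Im m_{\widetilde H}/(N\eta^3)}$, so that $\|\nabla_V\chi\|_2\lesssim 1/\sqrt{KN\widehat\eta^3}$ on $\mathrm{supp}\,\phi'$. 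Combined with the deterministic bound $|f_{VQV^*}|\le \|Q\|/\widehat\eta$, the truncated function $\tilde f\deq \chi f_{VQV^*}(E+\ii\widehat\eta)$ has the global Lipschitz constant
\begin{equation*}
\mathcal{L}_{\tilde f}\;\lesssim\; \sqrt{\tfrac{K}{N\widehat\eta^3}}+\tfrac{1}{\widehat\eta}\cdot\tfrac{1}{\sqrt{KN\widehat\eta^3}}\;\lesssim\; \tfrac{N^{\epsilon/4}}{\sqrt{N\widehat\eta^3}},
\end{equation*}
the first term dominating since $\widehat\eta\ge N^{-2/3+\gamma}$ with $\epsilon<\gamma$. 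The Gromov--Milman inequality on the Haar group then yields $|\tilde f-\E\tilde f|\prec 1/\sqrt{N^2\widehat\eta^3}$.

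Finally, the hypothesis $\Im m_{\widetilde H}(E+\ii\eta)\prec 1$ uniformly on $[\widehat\eta,\eta_{\mathrm M}]$ ensures $\chi(V)=1$ with probability $1-O(N^{-D})$ for every $D$, so that $\tilde f=f_{VQV^*}(E+\ii\widehat\eta)$ off a super-polynomially unlikely event; the deterministic bound $|f_{VQV^*}|\le\|Q\|/\widehat\eta$ also forces $|\E f_{VQV^*}-\E\tilde f|\le N^{-D}/\widehat\eta$, which is negligible. Combining these two estimates yields~\eqref{041815}, and the argument with $UQU^*$ in place of $VQV^*$ is identical by symmetry of the flow on $U$. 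The main obstacle is precisely the design of the cutoff: the derivative of $\chi$ contains a factor $\sqrt{\Im m_{\widetilde H}/(N\widehat\eta^3)}$ that must be balanced against the crude deterministic bound $\|Q\|/\widehat\eta$ on $|f_{VQV^*}|$, and it is here that the uniform control of $\Im m_{\widetilde H}$ on the whole interval $[\widehat\eta,\eta_{\mathrm M}]$ is convenient: it allows a supremum-based cutoff whose support is robust under the unitary flow rather than a fragile single-scale one.
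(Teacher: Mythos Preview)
Your overall strategy matches the paper's: regularize $f_{VQV^*}(\widehat z)$ by a cutoff enforcing $\Im m_{\widetilde H}\lesssim N^\varepsilon$, bound the Lipschitz constant of the regularized function using $\ntr|G_{\widetilde H}(\widehat z)|^4\le \widehat\eta^{-3}\Im m_{\widetilde H}(\widehat z)$, and apply Gromov--Milman. The directional derivative computation and the identification of the key bound $\mathcal{L}\lesssim\sqrt{\Im m_{\widetilde H}/(N\widehat\eta^3)}$ are correct.

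However, your balancing of the two contributions to $\mathcal{L}_{\tilde f}$ fails. With $K=N^{\epsilon/2}$ the second term equals
\[
\frac{1}{\widehat\eta}\cdot\frac{1}{\sqrt{KN\widehat\eta^3}}
=\frac{1}{N^{\epsilon/2}\widehat\eta}\cdot\frac{N^{\epsilon/4}}{\sqrt{N\widehat\eta^3}},
\]
so the first term dominates only when $\widehat\eta\ge N^{-\epsilon/2}$. For $\widehat\eta$ near $\eta_{\mathrm m}=N^{-2/3+\gamma}$ this requires $\epsilon\ge 4/3-2\gamma$, which is incompatible with the arbitrarily small $\epsilon$ needed for the $\prec$ conclusion. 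No other choice of $K$ repairs this: large $K$ blows up the first term, small $K$ the second.

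The step you are missing is that on the support of your cutoff you can do much better than the crude deterministic bound $|f_{VQV^*}(\widehat z)|\le\|Q\|/\widehat\eta$. The paper exploits this by structuring the cutoff as a \emph{product over dyadic scales} $\prod_{n}\widehat\chi\big(\Im m_H(E+\ii 2^n\widehat\eta)\big)$; on its support one has $\Im m_H(E+\ii 2^n\widehat\eta)\le 2N^\varepsilon$ for all $n$, and a dyadic eigenvalue counting argument (summing $N_n/(2^n\widehat\eta)$ over $n\le\lceil\log_2\widehat\eta^{-1}\rceil$) then yields
\[
\ntr|G_{\widetilde H}(\widehat z)|\;\lesssim\; N^\varepsilon\log N,
\]
so that $|f_{VQV^*}(\widehat z)|\lesssim\|Q\|N^\varepsilon\log N$ rather than $\|Q\|/\widehat\eta$. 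With this improvement the second contribution to $\mathcal{L}_{\tilde f}$ becomes $N^\varepsilon\log N\cdot(KN\widehat\eta^3)^{-1/2}$, which is of the same order as the first, and the proof closes for all $\widehat\eta\ge N^{-2/3+\gamma}$. Your supremum-based cutoff over a grid can be made to work the same way, but only if you actually invoke this counting bound on $|f_{VQV^*}|$ on the cutoff's support; the bare $\|Q\|/\widehat\eta$ is too weak.
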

\begin{proof} For fixed $E\in \mathcal{I}$, we consider $z=E+\ii\eta\in\C^+$ as a varying spectral parameter and use $\widehat{z}=E+\ii\widehat{\eta}$ for the specific choice in the lemma. By the definition of $f_{(\cdot)}$ and cyclicity of the trace, we have
\begin{align}
f_{VQV^*}(z)=\ntr VQV^*\big(VAV^*+UBU^*-z\big)^{-1}= \ntr Q\big(A+V^*UBU^*V-z\big)^{-1}\,, \label{082201}
\end{align}
where $\ntr(\,\cdot\,)$ stands for the normalized trace. For simplicity, we denote
\begin{align}
W\deq V^*U,\qquad \mathcal{H}\deq A+WBW^*\,, \qquad G_{\mathcal{H}}(z)\deq(\mathcal{H}-z)^{-1}\,. \label{071910}
\end{align}
Observe that $W$ is Haar distributed on $U(N)$, respectively $O(N)$, too. By cyclicity of the trace we have $\ntr G_\mathcal{H}(z)=\ntr G_H(z)=m_H(z)$.
According to~\eqref{082201} and~\eqref{071910}, we may regard in the sequel $f_{VQV^*}$ as a function of the Haar unitary matrix~$W$ by writing
 \begin{align*}
 h(z)=h_W(z)  \deq f_{VQV^*}(z)\,.
 \end{align*}
 
For any fixed (small) $\varepsilon>0$, let $\widehat\chi$ be a smooth cutoff supported on $[0,2N^{\varepsilon}]$, with $\widehat\chi(x)=1$, $x\in [0,N^{\varepsilon}]$, and with bounded derivatives. Since $m_H(z)=\ntr G_\mathcal{H}(z)$, we can regard~$m_H(z)$ as a function of~$W$ and write
\begin{align}\label{070110}
\chi(z)= \chi_W(z)\deq\widehat\chi(\Im m_H(z))\,.
\end{align}
We then introduce a regularization, $\widetilde{h}_W$, of $h_W$ by setting
\begin{align}
\widetilde{h}(z)=\widetilde{h}_{W}(z)\deq h_{W}(z)\prod_{n=0}^{\lceil -\log_2 \eta\rceil} \chi_W(E+\ii2^{n}\eta)\,.  \label{041820}
\end{align}
We will often drop the $W$ subscript from the notations $h_W(z)$, $\widetilde{h}_{W}(z)$
 and $\chi_W(z)$ but remember that these
are random variables depending on the Haar unitary $W$.

We will use assumption~\eqref{041816} at dyadic points, \ie  that
\begin{align}
\Im m_{H}(E+\ii2^l\widehat{\eta})\prec 1\,,\qquad\quad 0\leq l\leq \lceil -\log_2 \widehat\eta\rceil\,, \label{070102}
\end{align}
(recall that $m_H(z) = m_{\widetilde H}(z)$ so we may drop the tilde in the subscript of $m$).
Hence, by~\eqref{070110} and~\eqref{070102} we see that, for arbitrary large $D>0$,
\begin{align}
\prod_{l=0}^{\lceil -\log_2 \widehat{\eta}\rceil} \chi(E+\ii2^{l}\widehat{\eta})=1\,, \qquad 
\ie\qquad\widetilde{h}(\widehat{z})=h(\widehat{z})\,, \label{070111}
\end{align}
with probability larger than $1-N^{-D}$, for $N$ sufficiently large (depending on $\varepsilon$ and $D$). Taking the trivial bound $\|Q\|/\widehat\eta$ for $h(\widehat z)$ and for $\widetilde{h}(\widehat z)$ into account, we also have
\begin{align}
\mathbb{E}\,\widetilde{h}(\widehat{z})-\mathbb{E}\,h(\widehat{z})=O\left(N^{-D+1}\right)\,. \label{070112}
\end{align}
To prove~\eqref{041815}, it therefore suffices to establish the concentration estimate
\begin{align}
\left|\widetilde{h}(\widehat{z})-\mathbb{E}\widetilde{h}(\widehat{z})\right|\prec \frac{1}{\sqrt{N^2\widehat{\eta}^3}}\,, \label{070115}
\end{align}
for the regularized quantity $\widetilde h(\widehat z)$.

To verify~\eqref{070115}, we use the Gromov--Milman concentration inequality~\cite{GM83} (see Theorem~4.4.27 in~\cite{AGZ} for similar applications) which states the following. Let $\mathcal{M}(N)=\mathrm{SO}(N)$ or $\mathrm{SU}(N)$ endowed with the Riemann metric $\|\dd s\|_2$ inherited from $M_N(\C)$ (equipped with the Hilbert-Schmidt norm). If $g\,:\, (\mathcal{M}(N), \|{\rm d}s\|_2)\to \mathbb{R}$ is an $\mathcal{L}$-Lipschitz function satisfying $\mathbb{E}g=0$, then
\begin{eqnarray}
\mathbb{P}\left(|g|>\delta\right)\leq\e{-c\frac{N\delta^2}{{\mathcal{L}^2}}}\, ,\qquad\quad \forall \, \delta>0\,, \label{070501}
\end{eqnarray}
with some numerical constant $c>0$ (depending only on the symmetry type and not on $N$). Here $\mathbb{P}$ and $\mathbb{E}$ are with respect Haar measure on $\mathcal{M}(N)$.

In order to apply~\eqref{070501} to the function $W\mapsto\widetilde{h}_W(\widehat{z}) = \widetilde{h}(\widehat{z})$, we need to control its Lipschitz constant. To that end, we define the event  
\begin{align}
\Omega(\widehat{\eta})\equiv \Omega_E(\widehat{\eta})\deq \big\{ \Im m_H (E+\ii2^{n}\widehat{\eta})\leq 2N^\varepsilon \; : \; \forall n\in \N_0 \big\}\,.\label{071101}
\end{align} 
To bound the Lipschitz constant, we need to bound quantities of the form $\ntr |G_{\mathcal{H}}(\widehat{z})|^k$ restricted to the event $\Omega(\widehat{\eta})$. Let $(\lambda_i(\mathcal{H}))$ denote the eigenvalues of $\mathcal{H}$ and introduce
\begin{eqnarray*}
\mathcal{I}_n\deq[E-2^n\widehat{\eta}, E+2^n \widehat{\eta}]\cap\mathcal{I}\,,\qquad\qquad N_n\deq|\{i:\lambda_i(\mathcal{H})\in \mathcal{I}_n\}|\,,\qquad n\in\N_0\,.
\end{eqnarray*}
Since $H$ and $\mathcal{H}$ are unitarily equivalent, their
 empirical eigenvalue distributions are the same,~$\mu_H$;  \cf~\eqref{le empirical distribution lambdas}. Using the definition of the Stieltjes transform we have, for all $n\in\N_0$, the estimate
\begin{align*}
 N_n=N\int_{\mathcal{I}_n}\dd \mu_{H}\le\frac{3}{2}N\cdot 2^{n+1}\widehat\eta\int_{E-2^n\widehat{\eta}}^{E+2^n\widehat{\eta}}\frac{\widehat\eta\,\dd\mu_{ H}(x)}{(x-E)^2+\widehat\eta^2}\le 3N\cdot 2^n\widehat\eta\,\im m_{H}(\widehat z)\,.
\end{align*}
Thus we have on the event~$\Omega(\widehat{\eta})$ that
\begin{eqnarray}
N_n\lesssim 2^{n}N^{1+\varepsilon}\widehat{\eta}\,,\qquad \qquad\forall n\in\N_0\,. \label{070103}
\end{eqnarray}
By the spectral theorem, we can bound
\begin{align}\label{schnabel}
  \ntr |G_{\mathcal{H}}(\widehat{z})|\lesssim \frac{1}{N}\sum_{i=1}^N\frac{1}{|\lambda_i(\mathcal{H})-E|+\widehat{\eta}}\,.
\end{align}
Then we observe (with the convention $\mathcal{I}_{-1}=\emptyset$) that
\begin{align*}
  \frac{1}{N}\sum_{i=1}^N\frac{1}{|\lambda_i(\mathcal{H})-E|+\widehat{\eta}}&= \frac{1}{N}\sum_{i=1}^N\sum_{n=0}^\infty {\bf 1}(\lambda_i\in \mathcal{I}_{n}\setminus\mathcal{I}_{n-1} )\frac{1}{|\lambda_i(\mathcal{H})-E|+\widehat{\eta}}\nonumber\\
  &= \frac{1}{N}\sum_{n=0}^{\lceil c\log N\rceil}\sum_{\lambda_i\in\mathcal{I}_{n}\setminus\mathcal{I}_{n-1} } \frac{1}{|\lambda_i(\mathcal{H})-E|+\widehat{\eta}}\,,
\end{align*}
where we used $\|\mathcal{H}\|\le C$ to truncate the sum over $n$ at $\lceil c\log N\rceil$. We then bound
\begin{align*}
 {\bf 1}(\Omega(\widehat{\eta}))\frac{1}{N}\sum_{n=0}^{\lceil c\log N\rceil}\sum_{\lambda_i\in\mathcal{I}_{n}\backslash\mathcal{I}_{n-1} } \frac{1}{|\lambda_i(\mathcal{H})-E|+\widehat{\eta}}\le{\bf 1}(\Omega(\widehat{\eta}))\frac{1}{N}\sum_{n=0}^{\lceil c\log N\rceil}\frac{N_n}{2^n\widehat\eta} \lesssim N^{\varepsilon}\log N\,,
\end{align*}
where we used~\eqref{070103}, \ie with~\eqref{schnabel} we arrive at
\begin{align}
  {\bf 1}(\Omega(\widehat{\eta}))\ntr |G_{\mathcal{H}}(\widehat{z})|\lesssim  N^{\varepsilon}\log N\,. \label{041835}
\end{align}
Using the spectral decomposition of $\mathcal{H}$ we see that
\begin{align}\label{le ward}
\ntr |G_{\mathcal{H}}(\widehat z)|^2=\frac{1}{N}\sum_{i=1}^N\frac{1}{|\lambda_i(\mathcal{H})-E|^2+\widehat\eta^2}=\frac{1}{N\widehat\eta}\sum_{i=1}^N\frac{\widehat\eta}{|\lambda_i(\mathcal{H})-E|^2+\widehat\eta^2}=\frac{\im m_H(\widehat z)}{\widehat\eta}\,,
\end{align}
where we also used that $\ntr G_\mathcal{H}(\widehat z)=\ntr G_H(\widehat z)=m_H(\widehat z)$. Thus, we bound
\begin{align}
{\bf 1}(\Omega(\widehat{\eta}))\ntr |G_{\mathcal{H}}(\widehat{z})|^k \leq{\bf 1}(\Omega(\widehat{\eta}))\widehat{\eta}^{-k+1} \Im m_H(\widehat{z})\lesssim N^\varepsilon\widehat{\eta}^{-k+1}\,,\quad\qquad \forall k\geq 2\,. \label{070502}
\end{align}

Having established~\eqref{041835} and~\eqref{070502}, we proceed to estimate the Lipschitz constant of~$\widetilde{h}(\widehat{z})$ as a function of $W$. Let $\mathfrak{su}(N)$ and $\mathfrak{so}(N)$ denote the (fundamental representations in $M_N(\C)$ of the) Lie algebras of $SU(N)$ and $SO(N)$ respectively. Let $\mathfrak{m}$ stand for either $\mathfrak{su}(N)$ or $\mathfrak{so}(N)$. Note that $X\in\mathfrak{m}$ satisfies $X^*=-X$. Since $SU(N)$ and $SO(N)$ are matrix groups the Lie bracket of $\mathfrak{su}(N)$ and $\mathfrak{so}(N)$ respectively is given by the commutator in the matrix algebras. For fixed $X\in M_N(\C)$, we let $\mathrm{ad}_X\,:\, M_N(\C)\to M_N(\C)$, $Y\mapsto\mathrm{ad}_X(Y)\deq XY-YX$. For $X\in \mathfrak{m}$ and $t\in\R$, we may write $\e{t \mathrm{ad}_X}(WBW^*)=(\e{t X}W)B(\e{t X}W)^*$, where we used that $X^*=-X$. Further note that 
\begin{align}\label{le adjoint derivata}
 \frac{\dd}{\dd t}\e{t \mathrm{ad}_X}(WBW^*)=\e{t \mathrm{ad}_X}\mathrm{ad}_X(WBW^*)\,.
\end{align}
For $X\in\mathfrak{m}$ with $\|X\|_2=1$, we let
\begin{eqnarray*}
G_{\mathcal{H}}(z,tX)\deq\Big(A+\e{t\mathrm{ad}_X}(WBW^*)-z\Big)^{-1}\,,\qquad \qquad t\in\R\,,
\end{eqnarray*}
and denote accordingly
\begin{align}
m_H(z,tX)\deq\ntr G_{\mathcal{H}}(z,tX)\,,\qquad \qquad
\chi(z,tX)\deq\widehat\chi(\Im m_H(z,tX))\,,\nonumber\\
\widetilde{h}(z,tX)\deq\ntr QG_{\mathcal{H}}(z,tX)\small\prod_{l=0}^{\lceil -\log_2 \widehat{\eta}\rceil}\normalsize \chi(E+\ii2^{l}\eta,tX)\,,\nonumber
\end{align} 
with $\chi(z,0)\equiv \chi(z)$, $\widetilde h(z, 0)\equiv\widetilde h(0)$, {\it etc}. 
{
    \setlength{\lineskiplimit}{1pt}
    \setlength{\lineskip}{1pt} 
  \setlength{\abovedisplayskip}{2pt}
   \setlength{\belowdisplayskip}{1pt}
   \setlength{\abovedisplayshortskip}{1pt}
   \setlength{\belowdisplayshortskip}{1pt}
   \noindent

Evaluating the derivative of~$\widetilde{h}(\widehat{z},tX)$ with respect to $t$ at $t=0$ we get
\begin{align}
\frac{\partial}{\partial t}\widetilde{h}(\widehat{z},tX)\Big{|}_{t=0}=&\,-\ntr\! \Big(Q G_{\mathcal{H}}(\widehat{z})\mathrm{ad}_X(WBW^*)G_{\mathcal{H}}(\widehat{z})\Big)\prod_{l=0}^{\lceil -\log_2 \widehat{\eta}\rceil} \chi(E+\ii2^{l}\widehat{\eta})\nonumber\\
&-\ntr\!\big(QG_{\mathcal{H}}(\widehat{z})\big)\bigg(\sum_{j=0}^{\lceil -\log_2 \widehat{\eta}\rceil}\Big[\prod_{\substack{l=0 \\ l\neq j}}^{\lceil -\log_2 \widehat{\eta}\rceil}\chi(E+\ii2^{l}\widehat{\eta})\Big]\cdot \phi(E+\ii2^j\widehat{\eta}) \nonumber\\
&\qquad\quad\times  \Im \ntr\!\Big( G_{\mathcal{H}}(E+\ii2^j\widehat{\eta})\,\mathrm{ad}_X(WBW^*)G_{\mathcal{H}}(E+\ii2^j\widehat{\eta})\Big)\bigg)\,, \label{070505}
\end{align}
where we used~\eqref{le adjoint derivata} and where we introduced $\phi(z)\deq \widehat\chi'(\im m_H(z))$, with $\widehat\chi'$ the derivative of $\widehat\chi$. Recalling~\eqref{070110} and the definition of the cutoff $\widehat\chi$, we note the bounds
\begin{align}
\small\prod_{l=0}^{\lceil -\log_2 \widehat{\eta}\rceil}\normalsize\chi(E+\ii2^{l}\widehat{\eta})\leq 1\,,\qquad \small\sum_{j=0}^{\lceil -\log_2 \widehat{\eta}\rceil}\normalsize\Big[\prod_{l \neq j}\chi(E+\ii2^{l}\widehat{\eta})\Big]\cdot \phi(E+\ii2^j\widehat{\eta})= O(\log N)\,. \label{070503}
\end{align}
On the event $\Omega^{\mathrm{c}}(\widehat{\eta})$, the complementary event to $\Omega(\widehat{\eta})$, we further have the identities
\begin{align}
\small\prod_{l=0}^{\lceil -\log_2 \widehat{\eta}\rceil}\normalsize \chi(E+\ii2^{l}\widehat{\eta})=0\,,&\quad \small\sum_{j=0}^{\lceil -\log_2 \widehat{\eta}\rceil}\normalsize\Big[\prod_{\substack{l=0 \\ l\neq j}}^{\lceil -\log_2 \widehat{\eta}\rceil}\chi(E+\ii2^{l}\widehat{\eta})\Big]\cdot \phi(E+\ii2^j\widehat{\eta})=0\,.\label{070504}
\end{align}
It thus suffices to bound~\eqref{070505} on the event $\Omega(\widehat{\eta})$. We bound the first term on the right side of~\eqref{070505} as

\begin{align}\label{le junge r}
&{\bf 1}(\Omega(\widehat{\eta}))\Big|\ntr\!\Big( Q G_{\mathcal{H}}(\widehat{z})\mathrm{ad}_X(WBW^*)\,G_{\mathcal{H}}(\widehat{z})\Big)\prod_{l=0}^{\lceil -\log_2 \widehat{\eta}\rceil} \chi(E+\ii2^{l}\widehat{\eta})\Big| \nonumber\\
&\qquad\le {\bf 1}(\Omega(\widehat{\eta}))\frac{1}{N}\|\mathrm{ad}_X(WBW^*) \|_2\|G_{\mathcal{H}}(\widehat{z})Q G_{\mathcal{H}}(\widehat{z}) \|_2 \,\prod_{l=0}^{\lceil -\log_2 \widehat{\eta}\rceil} \chi(E+\ii2^{l}\widehat{\eta})\,,
\end{align}
where we used cyclicity of the trace and Cauchy--Schwarz inequality.
Next, note that  $\|\mathrm{ad}_X(WBW^*) \|_2\le2 \|B\|\|X\|_2\le 2\|B\|$, where we used the definition of $\mathrm{ad}_X$, $\|W\|\le 1$ and $\|X\|_2=1$. Similarly, we have $\|G_{\mathcal{H}}(\widehat{z})Q G_{\mathcal{H}}(\widehat{z}) \|_2 \le \|Q\| \|G_{\mathcal{H}}(\widehat{z})G_{\mathcal{H}}^*(\widehat{z})  \|_2$. Thus from~\eqref{le junge r},
\begin{align}\label{why not give it a name too}
{\bf 1}(\Omega(\widehat{\eta}))\Big|\ntr\!\Big( Q G_{\mathcal{H}}(\widehat{z})&\mathrm{ad}_X(WBW^*)\,G_{\mathcal{H}}(\widehat{z})\Big)\prod_{l=0}^{\lceil -\log_2 \widehat{\eta}\rceil} \chi(E+\ii2^{l}\widehat{\eta})\Big| \nonumber\\
&\le 2\|B\|\|Q\|{\bf 1}(\Omega(\widehat{\eta})) \left({\frac{\ntr |G_{\mathcal{H}}(\widehat{z})|^4}{N}}\right)^{\frac12}\prod_{l=0}^{\lceil -\log_2 \widehat{\eta}\rceil} \chi(E+\ii2^{l}\widehat{\eta})\nonumber\\
&\lesssim \left({\frac{N^\varepsilon}{N\widehat{\eta}^3}}\right)^{\frac12}\,,
\end{align}
where we used~\eqref{070502} with $k=4$ in the last step.

To handle the second term on the right side of~\eqref{070505}, we use~\eqref{070503} and~\eqref{041835} to get 
\begin{align}
{\bf 1}(\Omega(\widehat{\eta}))\Big| \ntr\!\big( QG_{\mathcal{H}}(\widehat{z})\big)&\sum_{j=0}^{\lceil -\log_2 \widehat{\eta}\rceil}\prod_{\substack{l=0 \\ l\neq j}}^{\lceil -\log_2 \widehat{\eta}\rceil}\chi(E+\ii2^{l}\widehat{\eta})\cdot \phi(E+\ii2^j\widehat{\eta}) \nonumber\\
&\qquad\qquad\times  \Im \ntr\!\Big( G_{\mathcal{H}}(E+\ii2^j\widehat{\eta})\mathrm{ad}_X(WBW^*)\,G_{\mathcal{H}}(E+\ii2^j\widehat{\eta})\Big)\Big|\nonumber\\
&\leq{\bf 1}(\Omega(\widehat{\eta})) {\|Q\|}\ntr |G_{\mathcal{H}}(\widehat{z})|\; \sum_{j=0}^{\lceil -\log_2 \widehat{\eta}\rceil}\prod_{\substack{l=0 \\ l\neq j}}^{\lceil -\log_2 \widehat{\eta}\rceil}\chi(E+\ii2^{l}\widehat{\eta})\; |\phi(E+\ii2^{l}\widehat{\eta})|\;\nonumber\\
&\qquad\qquad\times{2\|B\| \|Q\|} \left({\frac{\ntr |G_{\mathcal{H}}(E+\ii2^{j}\widehat{\eta})|^4}{N}}\right)^{\frac12} \nonumber\\
&\lesssim \left({\frac{N^{4\varepsilon}}{N\widehat{\eta}^3}}\right)^{\frac12}\,.\label{why not give it a name}
\end{align}

Combining~\eqref{why not give it a name} and~\eqref{why not give it a name too} we obtain, for any $X\in\mathfrak{su}(N)$ or $\mathfrak{so}(N)$ with $\|X\|_2=1$, that 
\begin{align}
\left|\frac{\partial}{\partial t}\widetilde{h}(\widehat{z},tX)\Big{|}_{t=0}\right|\lesssim \left({\frac{N^{4\varepsilon}}{N\widehat{\eta}^3}}\right)^{\frac12}\,,\label{072330}
\end{align} 
\ie the Lipschitz constant of $\widetilde h(\widehat{z})$ as a function of $W$ is bounded by $C \left({\frac{N^{4\varepsilon}}{N\widehat{\eta}^3}}\right)^{1/2}$, for some constant $C$ depending only on $\|B\|$ and $\|Q\|$. Thus, taking
\begin{gather}
g=\widetilde{h}(z)-\mathbb{E}\widetilde{h}(z)\,,\qquad \mathcal{L}=C \left(\frac{N^{4\varepsilon}}{N\widehat{\eta}^3}\right)^{\frac{1}{2}}\,,\qquad\delta=N^{3\varepsilon}/\sqrt{N^2\widehat{\eta}^{3}}\nonumber
\end{gather}
in~\eqref{070501}, and choosing $\varepsilon>0$ sufficiently small, we get~\eqref{070115}. Together with~\eqref{070111} and~\eqref{070112} this implies~\eqref{041815}.
}\qedhere\end{proof}

\subsection{Continuity argument}\label{la boostrapping subsection} In this subsection, we often omit $z\in\C^+$ from the notation. Let $U$ and $V$ be independent and both Haar distributed on either $U(N)$ or $O(N)$.
Recalling the notation in Section \ref{s.2.4}, we set
\begin{align}
&\Delta_A(z)\deq-(\IE[m_H(z)]) G_{\widetilde H}(z)-(\IE[f_{\widetilde{B}}(z)])G_{\widetilde{A}}(z)G_{\widetilde H}(z)\,,\nonumber\\
&\Delta_B(z)\deq-(\IE[m_H(z)])G_{\widetilde H}(z)-(\IE[f_{\widetilde{A}}(z)])G_{\widetilde{B}}(z)G_{\widetilde H}(z)\,, \qquad z\in\C^+\,,\label{041905}
\end{align}
where we introduced $\IE X\deq X-\E X$, for any random variables $X$. Using the left-invariance of Haar measure, one derives the identities
\begin{align*}
\mathbb{E}[ G_{\widetilde H}\otimes \widetilde{A}G_{\widetilde H}]=\mathbb{E} [\widetilde{A}G_{\widetilde H}\otimes G_{\widetilde H}]\,,\qquad \mathbb{E}[ G_{\widetilde H}\otimes \widetilde{B}G_{\widetilde H}]=\mathbb{E}[ \widetilde{B}G_{\widetilde H}\otimes G_{\widetilde H}]\,;
\end{align*}
see Theorem~7~in~\cite{VP} or Appendix~A of~\cite{Kargin} for proofs. Taking the partial trace for the first component of the tensor products, we get
\begin{align}\label{041301}
 \mathbb{E}G_{\widetilde H}(z)=\mathbb{E}G_{\widetilde{A}}(\omega_B^c(z))+\delta_A^c(z)\,,
 \qquad \delta_A^c(z)\deq\frac{1}{\mathbb{E}m_H(z)} \mathbb{E} [G_{\widetilde{A}}(\omega_B^c(z))(\widetilde{A}-z)\Delta_A(z)]\,,
\end{align}
 where $\omega_B^c(z)$ is defined in~\eqref{060104}, we used \eqref{le third equation},
 and where we implicitly assumed that $\im \omega_B^c(z)>0$. This last assumption will be verified along the continuity argument. Then, we set
\begin{align} \label{071508}
&r_A^c(z)\deq-\frac{\ntr \delta_A^c(z)}{\ntr G_{\widetilde A}(\omega_B^c(z))(\ntr G_{\widetilde{A}}(\omega_B^c(z))+\ntr \delta_A^c(z))}\,,
\end{align}
and define $\delta_B^c(z)$ and $r_B^{c}(z)$ in the same way by swapping the r\^oles of $A$ and $B$.
Using~\eqref{041301},~\eqref{le third equation},  we eventually obtain, under the assumption that $\im \omega_A^c(z)>0$ and $\im\omega_B^c(z)>0$,
\begin{align}
\PP_{\mu_A,\mu_B}(\omega_A^c(z),\omega_B^c(z),z)=r^c(z)\,,\qquad\qquad z\in\C^+\,, \label{071517}
\end{align}
with $r^c(z)=(r_A^c(z),r_B^c(z))^\top$.

\begin{lemma} \label{lem071701}
Fix $E\in \mathcal{I}$ and any $\wh\eta \in [\eta_\mathrm{m},\eta_{\mathrm{M}}]$.
Set the notation $z=E+\ii \eta$ and $\widehat{z}=E+\ii \widehat{\eta}$. Suppose that 

\begin{align}
|\omega_A^c(z)-\omega_A(z)|+|\omega_B^c(z)-\omega_B(z)|\le N^{-\gamma}\, ,\qquad
\forall \eta=\im z \in [\widehat{\eta},\eta_{\mathrm{M}}]\,.
\label{071502}
\end{align} 
 Moreover, assume that for the event
\begin{align*}
 \Xi(\wh\eta)\equiv\Xi_E(\wh\eta)  \deq \Big\{ |m_H(z)-m_{A\boxplus B}(z)| \le N^{-\gamma} \; : \;  z=E+\ii \eta, \quad
   \forall\eta\in [\widehat{\eta},\eta_{\mathrm{M}}] \Big\}
   \end{align*}
we have
\begin{equation}\label{Xi}
   \P\big(\Xi(\wh\eta)\big) \ge 1-N^{-D}\big(1+N^5 (\eta_{\mathrm{M}}-\wh\eta)\big)\,,
\end{equation}
for any $D>0$ if $N\ge N_1(D)$.

Then, for any $\epsilon>0$, the estimates
\begin{align}
|r_A^c(\widehat{z})| + |r_B^c(\widehat{z})|&\le \frac{N^\epsilon}{N^2\widehat{\eta}^3}\,, \label{071501} \\
|\omega_A^c(\widehat{z})-\omega_A(\widehat{z})|+|\omega_B^c(\widehat{z})-\omega_B(\widehat{z})|&\le\frac{N^\epsilon}{N^2\widehat{\eta}^3}\,, \label{071518} \\
 |\mathbb{E}m_H(\widehat{z})-m_{A\boxplus B}(\widehat{z})| &\le\frac{N^\epsilon}{N^2\widehat{\eta}^3}\,, 
 \label{Emvar}
 \end{align}
 hold for any $N\ge N_2(\epsilon)$. Moreover, for any $\epsilon, D>0$, the event
 \begin{equation}\label{071519}
 \Theta(\wh\eta)\equiv \Theta_E(\wh\eta)\deq \Xi_E(\wh\eta) \cap \Big\{ |m_H(\widehat{z})-m_{A\boxplus B}(\widehat{z})|\ge \frac{N^\epsilon}{\sqrt{N^2\widehat{\eta}^3}} \Big\}
\end{equation}
 satisfies
\begin{equation}\label{Om}
\P\big( \Theta(\wh\eta)\big) \le N^{-D}\,,
\end{equation}
 if $N\ge N_3(\epsilon, D)$. The threshold functions $N_1, N_2, N_3$ depend only on $\mu_\alpha, \mu_\beta$, the
speed of convergence in \eqref{le assumptions convergence empirical measures} and they 
 are uniform in $\wh\eta\in  [\eta_\mathrm{m},\eta_{\mathrm{M}}]$ and $E\in\mathcal{I}$.
\end{lemma}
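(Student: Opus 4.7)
The plan is to combine the concentration estimate of Proposition~\ref{prop041901} with the deterministic perturbation analysis of Proposition~\ref{le proposition perturbation of system} and the stability estimates of Corollary~\ref{cor081501}. First, I would verify the hypothesis~\eqref{041816} of Proposition~\ref{prop041901} on the range $\eta\in[\widehat\eta,\eta_{\mathrm{M}}]$: on the event $\Xi(\widehat\eta)$---which holds with probability at least $1-N^{-D}(1+N^5(\eta_{\mathrm{M}}-\widehat\eta))$ by~\eqref{Xi}---the bound $|m_{\mu_A\boxplus\mu_B}|\lesssim 1$ from Remark~\ref{la cor bound on mAB} forces $\im m_H=O(1)$ throughout this range, yielding $\im m_H\prec 1$ uniformly. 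Proposition~\ref{prop041901} then applies to $m_H=f_I$ and to the tracial quantities $f_{VQ_0V^*}$ and $f_{UQ_0U^*}$ for a finite list of deterministic matrices $Q_0$ of operator norm $O(1)$; boundedness here uses that, by~\eqref{071502} combined with Theorem~\ref{thm stability}, one has $\im\omega_A^c(\widehat z),\,\im\omega_B^c(\widehat z)\ge k$ for $N$ large.

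Second, I would read~\eqref{041301} as expressing $\ntr\delta_A^c(\widehat z)$ as $1/\mathbb{E}m_H(\widehat z)$ times the sum of two covariances $\mathrm{Cov}(m_H,X_A)+\mathrm{Cov}(f_{\widetilde B},Y_A)$, where the identities $G_{\widetilde A}(\omega_B^c)(\widetilde A-z)=V[I+(\omega_B^c-z)(A-\omega_B^c)^{-1}]V^*$ and $(\widetilde A-z)G_{\widetilde A}(z)=I$ display $X_A\deq\ntr[G_{\widetilde A}(\omega_B^c)(\widetilde A-z)G_{\widetilde H}]$ and $Y_A\deq\ntr[G_{\widetilde A}(\omega_B^c)G_{\widetilde H}]$ themselves as quantities of the form $f_{VQ_0V^*}$. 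Bounding each covariance by Cauchy--Schwarz, and converting the stochastic-domination concentration $|f_Q(\widehat z)-\mathbb{E}f_Q(\widehat z)|\prec 1/\sqrt{N^2\widehat\eta^3}$ into the $L^2$ bound $\mathrm{Var}(f_Q)\le N^{2\epsilon}/(N^2\widehat\eta^3)$ (via the trivial bound $|\IE f_Q|\lesssim\widehat\eta^{-1}$ on the complement of the high-probability event, whose contribution is absorbed for $D$ large), yields $|\ntr\delta_A^c(\widehat z)|\le N^\epsilon/(N^2\widehat\eta^3)$; the estimate~\eqref{071501} then follows after observing that $\ntr G_{\widetilde A}(\omega_B^c)=m_{\mu_A}(\omega_B^c(\widehat z))$ is bounded away from zero, since $\im\omega_B^c(\widehat z)\ge k$. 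Proposition~\ref{le proposition perturbation of system} applied to~\eqref{071517} at $z=\widehat z$---with a priori closeness $\delta=N^{-\gamma}$ from~\eqref{071502}, the constants $S,k,K$ furnished by Corollary~\ref{cor081501} and Lemma~\ref{cor.080601} (applicable for $N$ large thanks to~\eqref{le assumptions convergence empirical measures}), and the smallness conditions $\delta<k$, $k^2>\delta KS$ trivially met---then delivers~\eqref{071518}. Subtracting~\eqref{le third equation} from the free-convolution identity $\omega_A+\omega_B-z=-1/m_{\mu_A\boxplus\mu_B}$ and using $\mathbb{E}m_H,\,m_{\mu_A\boxplus\mu_B}\sim 1$ yields~\eqref{Emvar}.

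Finally, for~\eqref{Om}, Proposition~\ref{prop041901} applied directly to $m_H=f_I$ gives, on $\Xi(\widehat\eta)$, the concentration $|m_H(\widehat z)-\mathbb{E}m_H(\widehat z)|\prec 1/\sqrt{N^2\widehat\eta^3}$. Combined with the deterministic~\eqref{Emvar}, which is sharper than this fluctuation scale by the positive-power factor $\sqrt{N^2\widehat\eta^3}\ge N^{3\gamma/2}$, the triangle inequality gives $\P(\Theta(\widehat\eta))\le N^{-D}$. The main obstacle I anticipate is the second step: one must carefully identify every matrix arising in the expansion of $\delta_A^c$ as unitarily conjugate to a deterministic, uniformly bounded matrix, and then handle the conversion from stochastic-domination bounds to the variance bounds needed for Cauchy--Schwarz. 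The crucial gain is that the product of two square-root concentration bounds produces an $N^\epsilon/(N^2\widehat\eta^3)$ error---quadratically better than the input scale---which is precisely what allows~\eqref{071518} and hence the whole inductive step to not deteriorate.
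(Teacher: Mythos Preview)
Your proposal is correct and follows essentially the same route as the paper: verify the hypothesis of Proposition~\ref{prop041901} via~\eqref{Xi} and the uniform bound on $m_{A\boxplus B}$, expand $\ntr\delta_A^c(\widehat z)$ into two covariances via~\eqref{041905}--\eqref{041301}, apply Proposition~\ref{prop041901} with the four choices $Q=I$, $G_A(\omega_B^c(\widehat z))(A-\widehat z)$, $B$, $G_A(\omega_B^c(\widehat z))$ and Cauchy--Schwarz to obtain~\eqref{071501}, then feed this into Proposition~\ref{le proposition perturbation of system} for~\eqref{071518}, use~\eqref{le third equation} for~\eqref{Emvar}, and combine~\eqref{Emvar} with the concentration of $m_H$ for~\eqref{Om}. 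One minor correction: the lower bound $\im\omega_A^c(\widehat z),\im\omega_B^c(\widehat z)\ge k$ should be deduced from~\eqref{071502} together with Lemma~\ref{cor.080601} (which transfers the bounds of Theorem~\ref{thm stability} from $\omega_\alpha,\omega_\beta$ to $\omega_A,\omega_B$), not directly from Theorem~\ref{thm stability}.
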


We postpone the proof of Lemma \ref{lem071701} and prove Theorem \ref{thm041801} first.

\begin{proof}[Proof of Theorem \ref{thm041801}]  
We start with observing that it is sufficient to prove a version of~\eqref{le local convergence of m}
where the real part of the spectral parameter 
$E$  is fixed.
This version asserts that there is a 
 large ($N$-independent)~$\eta_{\mathrm{M}}$, to be fixed below, such that for any (small) $\epsilon>0$ and (large)~$D$,
 and  any fixed $E\in \mathcal{I}$,
\begin{align}\label{le local convergence of m new}
\mathbb{P}\,\bigg(\bigcup_{z\in\mathcal{S}_{E}(\eta_\mathrm{m},\eta_{\mathrm{M}})}\bigg\{\big|m_H(z)-m_{\mu_A\boxplus \mu_B}(z)\big|> \frac{N^{\epsilon}}{N (\im z)^{3/2}}\bigg\}\bigg)\le \frac{1}{N^D}\,,
\end{align}
holds for $N\ge N_0$, \ie the set $\mathcal{S}_{\mathcal{I}}(\eta_\mathrm{m},\eta_{\mathrm{M}})$ 
in~\eqref{le local convergence of m}
is replaced with $\mathcal{S}_{E}(\eta_\mathrm{m},\eta_{\mathrm{M}})\deq\{ E+\ii \eta \; : \; \eta\in[\eta_{\mathrm{m}},\eta_{\mathrm{M}} ]\}$. The threshold $N_0$ depends on $\epsilon, D$, $\mu_{\alpha}, \mu_\beta$, $\mathcal{I}$
and on the speed of convergence in \eqref{le assumptions convergence empirical measures}. 

Indeed, by introducing the discretized lattice version
$$
    \widehat{\mathcal{S}}_{\mathcal{I}}(a,b): =  {\mathcal{S}}_{\mathcal{I}}(a,b)\cap N^{-5}\{\Z\times \ii \Z\}
$$
of the spectral domain ${\mathcal{S}}_{\mathcal{I}}(a,b)$ (\cf \eqref{le domain S}) and by taking a union
bound, we see that~\eqref{le local convergence of m new}
implies  
\begin{align}\label{le local convergence of m newe}
\mathbb{P}\,\bigg(\bigcup_{z\in\widehat{\mathcal{S}}_{\mathcal{I}}(\eta_\mathrm{m},\eta_{\mathrm{M}})}\bigg\{\big|m_H(z)-m_{\mu_A\boxplus \mu_B}(z)\big|> \frac{N^{\epsilon}}{N (\im z)^{3/2}}\bigg\}\bigg)\le \frac{C}{N^{D-5}}\,.
\end{align}
Thanks to the Lipschitz continuity of the Stieltjes transforms $m_H(z)$ and $m_{\mu_A\boxplus \mu_B}(z)$
  with Lipschitz constant $\eta^{-2}=(\im z)^{-2}\le N^2$, for any $\im z\ge \eta_{\mathrm{m}}$, we see that~\eqref{le local convergence of m} follows from~\eqref{le local convergence of m newe} 
after a small adjustment of $\epsilon$ and $D$ that were anyway arbitrary.

 From now on we fix  $E\in \mathcal{I}$ and our goal is to prove \eqref{le local convergence of m new}. We will use   Lemma~\ref{lem071701}. In the first step we verify 
 that the assumptions  of this lemma hold for $\widehat\eta =\eta_{\mathrm{M}}$, \ie
 that~\eqref{071502} and~\eqref{Xi}  hold for $z=E+\ii \eta_{\mathrm{M}}$. In the second step,
 we successively use Lemma~\ref{lem071701} to reduce $\widehat\eta$ steps by steps of size $N^{-5}$
 until we have verified  \eqref{071502}--\eqref{Xi} down to $\widehat\eta =\eta_{\mathrm{m}}$.
 Then \eqref{le local convergence of m new} will follow from 
 a final application of Lemma~\ref{lem071701} combined with discretization argument 
  similar to the one above, but this time the $\eta$ variable instead of the $E$ variable.

 {\it Step 1. Initial bound.}
   First we note that 
  since $\mu_A$ and $\mu_B$ are compactly supported, $\|H\|$ is deterministically bounded, we thus have $\Im m_H(E+\ii \eta_{\mathrm{M}})\le(\eta_{\mathrm{M}})^{-1}\le 1$ assuming $\eta_{\mathrm{M}}\ge 1$.

 Following the main argument in the proof of Proposition~\ref{prop041901}, we have the concentration inequality 
\begin{align}
|f_{VQV^*}(E+\ii\eta_{\mathrm{M}})-\mathbb{E} f_{VQV^*}(E+\ii\eta_{\mathrm{M}})|\prec \frac{1}{\sqrt{N^2\eta_{\mathrm{M}}^{3}}}\,, \label{072202}
\end{align}
uniformly for any deterministic $Q$ with $\|Q\|\lesssim 1$. The analog concentration holds with~$V$ replaced by~$U$.  Using~\eqref{072202} with $Q=I$ ($I$ the identity matrix), we have $|\IE[m_H(E+\ii \eta_{\mathrm{M}})]|\prec{N^{-1}} $. Hence, it suffices to show that
\begin{align}
|\mathbb{E}m_H(E+\ii \eta_{\mathrm{M}})-m_{A\boxplus B}(E+\ii \eta_{\mathrm{M}})|\prec \frac{1}{N}\,. \label{072205}
\end{align}

Recalling the definitions of $\omega_A^c$ and $\omega_B^c$ in~\eqref{060104}, we have, with $z=E+\ii\eta_{\mathrm{M}}$, the expansion
\begin{align*}
 \omega_A^c(z)&=z-\frac{\E\,\ntr\! \widetilde A G_{\widetilde H}(z) }{\E\, \ntr G_{\widetilde H}(z)}=z-\frac{\ntr A z^{-1}+\E\,\ntr \widetilde A (\widetilde A+\widetilde B)z^{-2}}{z^{-1}}+O(z^{-2})\,,
\end{align*}
as $\eta_{\mathrm{M}}\nearrow\infty$. Thus using the assumption $\ntr A=0$ we get
\begin{align*}
 \im \omega_A^c(E+\ii\eta_{\mathrm{M}})-\im \eta_{\mathrm{M}}=\frac{\ntr A^2\eta_{\mathrm{M}}+\E\,\ntr\widetilde A\widetilde B\eta_{\mathrm{M}}}{|E+\ii\eta_{\mathrm{M}}|^2}+O\left(\frac{1}{\eta_{\mathrm{M}}^2}\right)\,,
\end{align*}
as $\eta_{\mathrm{M}}\nearrow\infty$. Next, since $V$ and $U$ are independent, we have $$\E\,\ntr VAV^* UBU^*=\ntr \E [VAV^*]\, \E[ UBU^*]=\ntr A\,\ntr B=0\,,$$
since $\ntr A=\ntr B=0$ by assumption. Thus
\begin{align}
 \im \omega_A^c(E+\ii\eta_{\mathrm{M}})-\im \eta_{\mathrm{M}}=\frac{\ntr A^2\eta_{\mathrm{M}}}{|E+\ii\eta_{\mathrm{M}}|^2}+O\left(\frac{1}{\eta_{\mathrm{M}}^2}\right)\,,
\end{align}
as $\eta_{\mathrm{M}}\nearrow\infty$. Since $\ntr A^2>0$, we achieve by choosing $\eta_{\mathrm{M}}$ sufficiently large (but independent of $N$) that
\begin{align}\label{corviglia}
 \im \omega_A^c(E+\ii\eta_{\mathrm{M}})-\im \eta_{\mathrm{M}}\ge\frac{1}{2}\frac{\ntr A^2\eta_{\mathrm{M}}}{|E+\ii\eta_{\mathrm{M}}|^2}\,,
\end{align}
and the analogue estimate holds with $A$ replaced by $B$. In particular, we have, for such $\eta_{\mathrm{M}}$,
\begin{align}\label{072206}
\im \omega_A^c(E+\ii\eta_{\mathrm{M}})\gtrsim 1\,,\qquad \im \omega_B^c(E+\ii\eta_{\mathrm{M}})\gtrsim 1\,,
\end{align}
and $\omega_A^c(E+\ii\eta_{\mathrm{M}})\sim 1$, $\omega_B^c(E+\ii\eta_{\mathrm{M}})\sim 1$.

To show~\eqref{072205}, we apply Lemma~\ref{le lemma large eta} to the system~\eqref{071517}. Having established~\eqref{corviglia}, it suffices to show that
\begin{align}
|r_A^c(E+\ii \eta_{\mathrm{M}})|\prec \frac{1}{N} \,,\qquad\qquad |r_B^c(E+\ii \eta_{\mathrm{M}})|\prec \frac{1}{N}\,,  \label{072001}
\end{align}
since then we have, for $N$ sufficiently large and $\eta_{\mathrm{M}}$ as above that, for any fixed $\varepsilon\in[0,1)$,
\begin{align}\label{072001k}
\frac{N^\varepsilon}{N}\le\frac{1}{2}\frac{\ntr A^2\eta_{\mathrm{M}}}{|E+\ii\eta_{\mathrm{M}}|^2}\le \im \omega_A^c(E+\ii\eta_{\mathrm{M}})-\im \eta_{\mathrm{M}}\,,
\end{align}
and similarly with $B$ replacing $A$. In particular, combining~\eqref{072001} and~\eqref{072001k}, we see that assumption~\eqref{le lower bound on tilde omega} of Lemma~\ref{le lemma large eta} (with the choice $\widetilde r = r^c$) is satisfied for $N$ sufficiently large (with high probability). Consequently, we see that~\eqref{071502} 
(even with $N^{-1+\epsilon}$ instead of $N^{-\gamma}$ in the latter) hold for 
 $z=E+\ii \eta_{\mathrm{M}}$. Finally, the equations
\begin{align}\label{le ww}
\mathbb{E} m_H(z)=\frac{1}{z-\omega_A^c(z)-\omega_B^c(z)}\,,\qquad \quad m_{A\boxplus B}(z)=\frac{1}{z-\omega_A(z)-\omega_B(z)}\,,
\end{align}
together with the concentration estimate~\eqref{072202} yield \eqref{Xi}.

It remains to justify~\eqref{072001}. Since $\ii\eta_{M}\,m_H(E+\ii\eta_{\mathrm{M}})=O(\eta_{\mathrm{M}}^{-1})$, we have $\mathbb{E}m_H(E+\ii \eta_{\mathrm{M}})\sim 1$. In addition, from~\eqref{072206} it follows that $m_A(\omega_B^c(E+\ii\eta_{\mathrm{M}}))\sim 1$ and $m_B(\omega_A^c(E+\ii\eta_{\mathrm{M}}))\sim 1$. Recalling the definition in~\eqref{071508}, we see that~\eqref{072001} is equivalent to 
\begin{align}
|\ntr \delta_A^c(E+\ii \eta_{\mathrm{M}})|\prec \frac{1}{N} \,,\qquad \qquad |\ntr \delta_B^c(E+\ii \eta_{\mathrm{M}})|\prec \frac{1}{N}\,.  \label{072002}
\end{align}
By the definitions of $\delta_A^c$, $\delta_B^c$ in~\eqref{041301}, and $\Delta_A$, $\Delta_B$ in~\eqref{041905}, it is easy to obtain~\eqref{072002} by using~\eqref{072202} and Cauchy--Schwarz.
This completes {\it Step 1}, \ie the verification of \eqref{071502}--\eqref{Xi} for $\wh\eta=\eta_{\mathrm{M}}$.

 {\it Step 2. Induction.} Recall that $\omega_A$, $\omega_B$ and $m_{A\boxplus B}$ (see Lemma~\ref{cor.080601})
are uniformly bounded and $\omega_A^c(z)$, $\omega_B^c(z)$, $m_H(z)$, $\omega_A(z)$, $\omega_B(z)$ and $m_{A\boxplus B}(z)$
are Lipschitz continuous with a Lipschitz constant bounded by $(\im z)^{-2}\le N^2$, for any $\im z\ge \eta_{\mathrm{m}}$.
 Applying Lemma~\ref{lem071701}  to conclude~\eqref{071518} with  the choice $\epsilon=\gamma/10$, we see that if~\eqref{071502} and~\eqref{Xi} hold for some $\wh\eta$,
  then ~\eqref{071502}  also holds
   for $\wh\eta$ replaced with $\widehat{\eta}-N^{-5}$ as long as $\widehat{\eta}\geq \eta_\mathrm{m}$.
  Moreover, by the Lipschitz continuity of $m_H$ and $m_{A\boxplus B}$, notice that
 \begin{equation}\label{contain}
     \Xi(\wh\eta - N^{-5}) \supset \Xi(\wh\eta) \setminus\Theta(\wh\eta)\,.
 \end{equation}
Thus, if \eqref{Xi} holds for some $\wh\eta$, then~\eqref{contain} and~\eqref{Om} imply that~\eqref{Xi} also holds for~$\wh\eta$ replaced with $\widehat{\eta}-N^{-5}$. 
Using {\it Step~1} as an initial input with the choice~$\widehat{\eta} = \eta_{\mathrm{M}}$, 
and applying the above induction argument $O(N^5)$ times by reducing~$\widehat\eta$ with stepsize~$N^{-5}$,
we  see  that~\eqref{071502} and~\eqref{Xi}  hold for all $\wh\eta_k\in [\eta_\mathrm{m},\eta_{\mathrm{M}}]$
of the form $\wh\eta_k = \eta_{\mathrm{M}} - k\cdot N^{-5}$ with some integer $k$.
Applying Lemma~\ref{lem071701} once more
for these $\wh\eta_k$, but now with an arbitrary $\epsilon>0$, we conclude from \eqref{Xi} and  \eqref{Om} that
\begin{equation}\label{mm}
  |m_H(E+\ii \wh\eta_k)-m_{A\boxplus B}(E+\ii \wh\eta_k)|\prec\frac{1}{\sqrt{N^2\wh\eta_k^3}}\,, \qquad\quad k=0,1,\ldots, k_0\,,
\end{equation}
where $k_0$ is the largest integer with $\wh\eta_{k_0}\ge \eta_{\mathrm{m}}$.
The uniformity of \eqref{mm} in  $k$ follows from the fact that  the threshold functions $N_j$ in 
Lemma~\ref{lem071701} are independent of $\wh\eta$.
 Clearly $k_0=O(N^5)$, so taking a union bound of \eqref{mm}, compensating
the combinatorial  factor $CN^{5}$ by replacing $D$ by $D-5$, and slightly adjusting $\epsilon$ to
extend the control from the set $\{ z=E+\ii \wh\eta_k \; : \; k\le k_0\}$ to all $z\in\mathcal{S}_E( \eta_{\mathrm{m}},\eta_{\mathrm{M}})$,
we obtain \eqref{le local convergence of m new}.
\qedhere \end{proof}

It remains to prove Lemma~\ref{lem071701}. 

\begin{proof}[Proof of Lemma \ref{lem071701}] 
First we notice that $E\in\mathcal{I}$ and~\eqref{le assumptions convergence empirical measures} imply
that for all sufficiently large $N$, the bounds~\eqref{le upper bound on omega AB}-\eqref{le lower bound on omega AB}
hold. Together with~\eqref{071502} they imply that 
   \begin{align}\label{071503}
&\omega_A^c(\wh z)\,,\omega_B^c(\wh z)\sim 1\,, \qquad\Im \omega_A^c(\wh z)\,, \Im\omega_B^c(\wh z)\gtrsim 1\,,  
\end{align}
moreover, using~\eqref{le third equation} we also get
\begin{equation}\label{Em}
   \frac{1}{\E m_H(\wh z)}\lesssim 1.
\end{equation}

We start with~\eqref{071501}. Thanks to symmetry, we only need to estimate~$|r_A^c(\wh z)|$. By~\eqref{071503} we have 
\begin{align}
\|G_{\widetilde{A}}(\omega_B^c(\widehat{z}))\|=\|G_{A}(\omega_B^c(\widehat{z}))\|\lesssim 1\,. \label{071511}
\end{align}
Furthermore, $\omega_B^c(\widehat{z})\sim 1$ and $\Im \omega_B^c(\widehat{z})\gtrsim 1$ imply $m_A(\omega_B^c(\widehat{z}))\sim 1$.

We next claim that
\begin{align}
\Big|\mathbb{E}\big[\ntr\!\big( G_{\widetilde{A}}(\omega_B^c(\widehat{z}))(\widetilde{A}-\widehat{z})\Delta_A(\widehat{z})\big)\big]\Big|\leq \frac{N^\epsilon}{N^2\widehat{\eta}^3}\,, \label{071507}
\end{align}
for any $\epsilon>0$ if $N\ge N_0(\epsilon)$ is large enough, uniformly for
 $\wh\eta\in[\eta_{\mathrm{m}},\eta_{\mathrm{M}}]$.
Assuming~\eqref{071507} and recalling the definition of $\delta_A^c$ and $r_A^c$ in~\eqref{041301}-\eqref{071508}, 
from \eqref{Em} we get the first estimate in~\eqref{071501}. 

Next, we prove~\eqref{071507}. By the definitions in~\eqref{041905}, we have
\begin{align}
\mathbb{E}\Big[\ntr\!\big(G_{\widetilde{A}}(\omega_B^c(\widehat{z}))(\widetilde{A}-\widehat{z})\Delta_A(\widehat{z})\big)\Big]=&-\mathbb{E}\Big[\IE[m_H(\widehat{z})]\,\ntr\!\big( G_{\widetilde{A}}(\omega_B^c(\widehat{z}))(\widetilde{A}-\widehat{z})G_{\widetilde H}(\widehat{z})\big)\Big]\nonumber\\
&-\mathbb{E} \Big[\IE[f_B(\widehat{z})]\,\ntr\!\big(G_{\widetilde{A}}(\omega_B^c(\widehat{z}))G_{\widetilde H}(\widehat{z})\big) \Big]\,.\label{041920}
\end{align}
We rewrite the two terms on the right side separately as covariances,
\begin{multline*}
\mathbb{E}\Big[\IE[m_H(\widehat{z})]\,\ntr\!\big( G_{\widetilde{A}}(\omega_B^c(\widehat{z}))(\widetilde{A}-\widehat{z})G_{\widetilde H}(\widehat{z})\big)\Big]
=\mathrm{Cov}\left(m_H(\widehat{z}),\ntr\!\left(G_{\widetilde{A}}(\omega_B^c(\widehat{z}))(\widetilde{A}-\widehat{z})G_{\widetilde H}(\widehat{z})\right)\right)\,,\end{multline*}
respectively,
\begin{align*}
 \mathbb{E}\Big[\IE[f_{\widetilde B}(\widehat{z})] \ntr\!\left(G_{\widetilde{A}}(\omega_B^c(\widehat{z}))G_{\widetilde H}(\widehat{z})\right)\Big]
=\mathrm{Cov}\Big(f_{\widetilde B}(\widehat{z}), \ntr\!\big(G_{\widetilde{A}}(\omega_B^c(\widehat{z}))G_{\widetilde H}(\widehat{z})\big)\Big)\,,
\end{align*}
where $\mathrm{Cov}(X,Y)\deq\mathbb{E}(\IE[X]\cdot\IE[Y])$, for arbitrary random variables $X$ and $Y$.

Given~\eqref{Xi} and the uniform boundedness of $m_{A\boxplus B}(z)$ from \eqref{le bound on mAB stuff},
 we see that~\eqref{041816} is satisfied and   we can apply Proposition~\ref{prop041901}  
using different choices for~$Q$.
Together with Cauchy--Schwarz inequality $|\mathrm{Cov}(X,Y)|^2\leq \mathbb{E}|\!\IE[X]|^2\cdot\mathbb{E}|\!\IE[Y]|^2$, we get 
\begin{align}\label{071515}
\Big|\mathrm{Cov}\left(m_H(\widehat{z}),\ntr\! \left(G_{\widetilde{A}}(\omega_B^c(\widehat{z}))(\widetilde{A}-\widehat{z})G_{\widetilde H}(\widehat{z})\right)\right)\Big|&\prec\frac{1}{N^2\widehat{\eta}^3}\,,\nonumber\\
 \Big|\mathrm{Cov}\left(f_{\widetilde B}(\widehat{z}), \ntr\!\left(G_{\widetilde{A}}(\omega_B^c(\widehat{z}))G_{\widetilde H}(\widehat{z})\right)\right)\Big|&\prec\frac{1}{N^2\widehat{\eta}^3}\,. 
\end{align} 
More specifically, for the first line of \eqref{071515}, 
we chose $Q=I$ and $Q=G_{A}(\omega_B^c(\widehat{z}))(A-\widehat{z})$; 
for the second line we chose $Q=B$ and $Q =G_{A}(\omega_B^c(\widehat{z}))$, where we also used the facts $\widetilde{A}=VAV^*$ and $\widetilde{B}=UBU^*$. Here, we also implicitly used~\eqref{071511}. Then, 
\eqref{071507} follows from~\eqref{071515}, which in turn proves~\eqref{071501}. 

Next, using Proposition~\ref{le proposition perturbation of system},~\eqref{071517} and ~\eqref{071501} we immediately get~\eqref{071518}. Moreover, since $\im \omega_A(z)\,,\im \omega_B(z)\ge \im z$, we have $|z-\omega_A(z)-\omega_B(z)|\geq \Im \omega_B(z)\gtrsim 1$. Together with~\eqref{071518} and~\eqref{le ww} 
this implies~\eqref{Emvar}. Notice that~\eqref{Xi} together with the uniform bound on $m_{A\boxplus B}$
imply the condition \eqref{041816} in Proposition~\ref{prop041901}.
Thus,  finally~\eqref{071519} and~\eqref{Om} follow from~\eqref{Emvar}
 and the concentration inequality~\eqref{041815}. This completes the proof of Lemma~\ref{lem071701}.\qedhere\end{proof}

\section{Two point mass case}\label{le final section}
In this section, we discuss stability properties of the free additive convolution $\mu_\alpha\boxplus\mu_\beta$ when both $\mu_\alpha$ and $\mu_\beta$ are convex combinations of two point masses. The analogous result to Theorem~\ref{thm stability} is given in Proposition~\ref{stability for two point mass} below. Applications of that result in the spirit of Theorems~\ref{le theorem continuity} and~\ref{thm041801} are then stated in Proposition~\ref{le theorem continuity: two point mass case} and Proposition~\ref{pro.a.1}. When we refer to the results in Sections \ref{section: Main results}-\ref{section perturbation}, we will henceforth regard $\mu_1$ and $\mu_2$ as $\mu_\alpha$ and $\mu_\beta$, respectively, unless specified otherwise.

\subsection{Stability in the two point mass case}
Without loss of generality (up to shifting and scaling), we assume that
\begin{align}
&\mu_\alpha=\xi\delta_1+(1-\xi)\delta_0\,,\qquad \mu_\beta=\zeta\delta_{\theta}+(1-\zeta)\delta_0\,,\qquad \theta\neq 0\,,\qquad\nonumber\\
 & \xi,\zeta\in (0,\frac{1}{2}]\,, \qquad \xi\leq \zeta,\qquad (\theta, \xi,\zeta)\neq (-1,\frac12,\frac12)\,. \label{081210}
\end{align}
Here we exclude the case  $(\theta, \xi,\zeta)=(-1,\frac12,\frac12)$ since it is equivalent to $(\theta, \xi,\zeta)=(1,\frac12,\frac12)$ under a shift. Note that the latter is a special case of $\mu_\alpha=\mu_\beta$. 

Set
\begin{align*}
\ell_{1}&\deq\min\Big\{\frac{1}{2}\Big(1+\theta-\sqrt{(1-\theta)^2+4\theta r_+}\Big), \frac{1}{2}\Big(1+\theta-\sqrt{(1-\theta)^2+4\theta r_-}\Big)\Big\}\,,\\
\ell_{2}&\deq\max\Big\{\frac{1}{2}\Big(1+\theta-\sqrt{(1-\theta)^2+4\theta r_+}\Big), \frac{1}{2}\Big(1+\theta-\sqrt{(1-\theta)^2+4\theta r_-}\Big)\Big\}\,,\\
\ell_{3}&\deq\min\Big\{\frac{1}{2}\Big(1+\theta+\sqrt{(1-\theta)^2+4\theta r_+}\Big), \frac{1}{2}\Big(1+\theta+\sqrt{(1-\theta)^2+4\theta r_-}\Big)\Big\}\,,\\
\ell_{4}&\deq\max\Big\{\frac{1}{2}\Big(1+\theta+\sqrt{(1-\theta)^2+4\theta r_+}\Big), \frac{1}{2}\Big(1+\theta+\sqrt{(1-\theta)^2+4\theta r_-}\Big)\Big\}\,,
\end{align*}
where we introduced
\begin{align}
r_{\pm}\deq\xi+\zeta-2\xi\zeta\pm\sqrt{4\xi\zeta(1-\xi)(1-\zeta)}\,. \label{r plus minus}
\end{align}
Note that $\ell_1<\ell_2\le \ell_3<\ell_4$. The following result, taken from~\cite{Kargin2012a}, describes the regular bulk of $\mu_\alpha\boxplus\mu_\beta$ in the  setting of~\eqref{081210}. Recall that $f_{\mu_\alpha\boxplus\mu_\beta}$ denotes the density of $(\mu_\alpha\boxplus\mu_\beta)^{\mathrm{ac}}$.

\begin{lemma} \label{support of two point mass} Let $\mu_\alpha$ and $\mu_\beta$ be as in~\eqref{081210}. Then the regular bulk is given by
\begin{align}
\mathcal{B}_{\mu_\alpha\boxplus\mu_\beta}=(\ell_{1}, \ell_{2})\cup (\ell_{3}, \ell_{4})\,, \label{def of I case 1}
\end{align}
in case  $\mu_\alpha\neq \mu_\beta$, while in case $\mu_\alpha=\mu_\beta$ it is given by
\begin{align}
\mathcal{B}_{\mu_\alpha\boxplus\mu_\alpha}=(\ell_{1}, \ell_{4}) \label{def of I case 2}\,.
\end{align}

 \end{lemma}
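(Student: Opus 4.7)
The plan is to derive an explicit algebraic equation for $m \equiv m_{\mu_\alpha\boxplus\mu_\beta}$ from the subordination system, read the support of the absolutely continuous part off its discriminant, and finally remove the finitely many points required in passing from $\mathrm{supp}\,(\mu_\alpha\boxplus\mu_\beta)^{\mathrm{ac}}$ to $\mathcal{B}_{\mu_\alpha\boxplus\mu_\beta}$.

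First, the two measures have reciprocal Stieltjes transforms
$$F_{\mu_\alpha}(w) = \frac{w(w-1)}{w-(1-\xi)}\,, \qquad F_{\mu_\beta}(w) = \frac{w(w-\theta)}{w-(1-\zeta)\theta}\,,$$
computed directly from~\eqref{le definition of stieltjes transform}--\eqref{le F definition}. Using~\eqref{le kkv}, the subordination system~\eqref{le definiting equations} becomes $F_{\mu_\alpha}(\omega_2) = F_{\mu_\beta}(\omega_1) = -1/m$ together with $\omega_1 + \omega_2 = z - 1/m$. Each equation $F_{\mu_i}(\omega_j) = -1/m$ is quadratic in $\omega_j$, so I can solve for $\omega_1,\omega_2$ in terms of $m$ (up to a choice of branch) and substitute into the additive relation. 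Squaring once to clear both square roots yields a polynomial identity $R(m,z) = 0$ of degree four in $m$ with coefficients polynomial in $z,\theta,\xi,\zeta$; this is essentially the calculation of~\cite{Kargin2012a}.

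Second, the density $f_{\mu_\alpha\boxplus\mu_\beta}(E) = \pi^{-1}\lim_{\eta\searrow 0}\Im m(E+\ii\eta)$ is nonzero precisely where $R(\,\cdot\,,E)$ has a non-real root, so the ac support is determined by the real zero set of the discriminant $\mathrm{Disc}_m R(m,E)$. A direct factorisation of the discriminant (completing the square after the elimination step above) shows that it vanishes exactly at the four real points
$$E = \tfrac{1}{2}\bigl(1+\theta \pm \sqrt{(1-\theta)^2 + 4\theta r_\pm}\bigr)\,,$$
with $r_\pm$ as in~\eqref{r plus minus}; ordering them gives $\ell_1\le\ell_2\le\ell_3\le\ell_4$. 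Tracking the roots of $R$ on the consecutive real intervals delimited by these points shows that $\Im m(E+\ii 0) > 0$ precisely on $(\ell_1,\ell_2)\cup(\ell_3,\ell_4)$ and that $\Im m$ extends continuously and boundedly to the closures, so by Proposition~\ref{le real analytic prop} the density is real analytic and strictly positive on these two open intervals.

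Third, by Theorem~7.4 of~\cite{BeV98} and Belinschi's refinement quoted in Section~\ref{le subsection additive convolution}, the points excluded from $\mathrm{supp}\,(\mu_\alpha\boxplus\mu_\beta)^{\mathrm{ac}}$ when forming $\mathcal{U}_{\mu_\alpha\boxplus\mu_\beta}$ lie in the finite set $\{a+b:\mu_\alpha(\{a\})+\mu_\beta(\{b\})\ge 1\}\subset\{0,1,\theta,1+\theta\}$. A short case analysis using $\xi\le\zeta\le 1/2$ shows that each such candidate point either fails the mass condition or coincides with one of the $\ell_i$, hence lies on the boundary of $(\ell_1,\ell_2)\cup(\ell_3,\ell_4)$. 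Since the density has just been shown strictly positive on the open intervals, no further zero needs to be removed, yielding~\eqref{def of I case 1}. In the symmetric case $\mu_\alpha=\mu_\beta$ one has $\theta = 1$ and $\xi = \zeta$, so~\eqref{r plus minus} gives $r_- = 0$ and $r_+ = 4\xi(1-\xi)$; consequently $\ell_2 = \ell_3 = 1$, the two intervals merge into $(\ell_1,\ell_4)$, and the explicit density~\eqref{prototype of two point mass} confirms positivity throughout, establishing~\eqref{def of I case 2}. The main obstacle throughout is the algebraic bookkeeping in the discriminant factorisation, which produces the clean expressions $(1-\theta)^2 + 4\theta r_\pm$ and is what distinguishes this two-point-mass setting from the analysis behind Theorem~\ref{thm stability}.
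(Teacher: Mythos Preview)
Your approach is correct in spirit and genuinely different from the paper's. The paper does not touch the subordination system algebraically at all: instead it realises $\mu_\alpha\boxplus\mu_\beta$ as the limiting spectral distribution of $H=A+UBU^*$ with $\mu_A\to\mu_\alpha$, $\mu_B\to\mu_\beta$, invokes Theorem~1.1 of~\cite{Kargin2012a} to identify the nontrivial eigenvalues of $H$ with a two-fold map~\eqref{two fold transformation} of the eigenvalues of a Jacobi ensemble, and then reads the support and positivity of the density off the explicit Jacobi limit~\eqref{density of Jacobi}. Your route via the quartic $R(m,z)$ and its discriminant is self-contained and avoids the random matrix detour, at the cost of the ``algebraic bookkeeping'' you flag; the paper's route is shorter because the hard work is outsourced to the Jacobi ensemble result. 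One correction: your sentence ``this is essentially the calculation of~\cite{Kargin2012a}'' is misleading, since Kargin's paper uses precisely the random-matrix/Jacobi argument the present paper reproduces, not your elimination-and-discriminant method. Also, ``squaring once to clear both square roots'' is a slip---with two independent radicals you need two squarings (or a resultant), which is consistent with the degree-four outcome you state.
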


\begin{proof} Choose the diagonal matrices $A$ and $B$ with spectral distribution $\mu_A=\xi_N\delta_1+(1-\xi_N)\delta_0$ and $\mu_B=\zeta_N\delta_\theta+(1-\zeta_N)\delta_0$ respectively, with $\xi_N\deq\lfloor \xi N\rfloor/N$ and $\zeta_N\deq\lfloor \zeta N\rfloor/N$, where $\lfloor\,\cdot\,\rfloor$ denotes the integer part. Recall from~\eqref{081210} that $\xi\leq \zeta$ and  $\xi+\zeta\leq 1$.
From Theorem~1.1 of~\cite{Kargin2012a}, we first observe that the $\theta$ and $0$ are eigenvalues of the matrix $H=A+UBU^*$, $U$ a Haar unitary, with multiplicities $N(\zeta_N-\xi_N)$ and $N(1-\zeta_N-\xi_N)$, respectively. The remaining $2\xi_N N$ eigenvalues of $H$ may be obtained via a two-fold transformation from the eigenvalues, $(t_i)$, of a $\xi_N N$-dimensional Jacobi ensemble as
\begin{align}
\tau_{ j}^{\pm}\deq\frac{1}{2}\Big(1+\theta\pm \sqrt{(1-\theta)^2+4 t_j}\Big)\,,\qquad j=1,\ldots, \xi_N N\,, \label{two fold transformation}
\end{align}
and then identifying the eigenvalues of $H$ as the set $\{\tau_j^{+}\}\cup \{\tau_j^{-}\}\cup\{0,\theta\}$.
In addition, the weak limit of $\frac{1}{\xi_N N}\sum_j \delta_{t_j}$, as $N\to\infty$, admits a density given by 
\begin{align}
f(x)=\frac{1}{2\pi\xi}\frac{\sqrt{(r_+-x)(x-r_-)}}{x(1-x)} \mathbf{1}_{[r_-,r_+]}(x)\,,\qquad\qquad x\in\R\,, \label{density of Jacobi}
\end{align}
where $r_+$ and $r_-$ are defined in~\eqref{r plus minus}. Since the limiting spectral distribution of $H$ is given by $\mu_\alpha\boxplus\mu_\beta$, we see that  $(\mu_\alpha\boxplus\mu_\beta)^{\mathrm{ac}}$ agrees with the weak limit of the measure $\frac{1}{N}\sum_{j}(\delta_{\tau_j^+}+\delta_{\tau_{j}^-})$, as $N\to\infty$. 
Using this information together with~\eqref{two fold transformation} and~\eqref{density of Jacobi}, one deduces that $\mathrm{supp}\, (\mu_\alpha\boxplus\mu_\beta)^{\mathrm{ac}}=[\ell_1,\ell_2]\cup [\ell_3,\ell_4]$.  It then follows from the explicit form of the limiting distribution of the Jacobi ensemble that $f_{\mu_\alpha\boxplus\mu_\beta}$ is bounded and strictly positive inside its support. This proves~\eqref{def of I case 1}.

In the special case $\mu_\alpha=\mu_\beta$, we have $\ell_2=\ell_3=1$ and thus $\mathrm{supp}\,(\mu_\alpha\boxplus\mu_\beta)^{\mathrm{ac}}=[\ell_1,\ell_4]$, with $\ell_1=1-2\sqrt{\xi(1-\xi)}$ and $\ell_4=1+2\sqrt{\xi(1-\xi)}$. In fact, the density of $(\mu_\alpha\boxplus\mu_\alpha)^{\mathrm{ac}}$ equals
\begin{align}\label{le fa density}
f_{\mu_\alpha\boxplus\mu_\alpha}(x)=\frac{1}{\pi}\frac{\sqrt{(\ell_4-x)(x-\ell_1)}}{x(2-x)}\,,\qquad\qquad x\in (\ell_1,\ell_4)\,;
\end{align}
see~(5.5) of~\cite{VP} for instance. Then~\eqref{def of I case 2} follows directly
\end{proof}

Our main task in this section is to show the following  result on the stability of the system $\PP_{\mu_\alpha,\mu_\beta}(\omega_\alpha,\omega_\beta,z)=0$ in the setting~\eqref{081210}. Recall the definition of $\Gamma_{\mu_\alpha,\mu_\beta}$ in~\eqref{le what stable means}.
\begin{proposition} \label{stability for two point mass}
Let $\mu_\alpha$ and $\mu_\beta$ be as in~\eqref{081210}. Let $\mathcal{I}\subset\mathcal{B}_{\mu_\alpha\boxplus\mu_\beta}$ be a compact non-empty interval. Fix $0< \eta_{\mathrm{M}}<\infty$. Then, there are constants $k>0$, $K<\infty$ and $S<\infty$, depending on the constants $\xi$, $\zeta$, $\theta$, $\eta_M$ and on the interval $\mathcal{I}$, such that  the subordination functions possess the following bounds:
\begin{align}\label{two point mass different: minimum of omegas}
 \min_{z\in \mathcal{S}_{\mathcal{I}}(0,\eta_{\mathrm{M}})}\im \omega_\alpha(z)\ge 2k\,,\qquad \min_{z\in \mathcal{S}_{\mathcal{I}}(0,\eta_{\mathrm{M}})}\im \omega_\alpha(z)\ge 2k\,,
\end{align}
\begin{align}\label{two point mass different: maximum of omegas}
 \max_{z\in \mathcal{S}_{\mathcal{I}}(0,\eta_{\mathrm{M}})}|\omega_\alpha(z)|\le \frac{K}{2}\,,\qquad\max_{z\in \mathcal{S}_{\mathcal{I}}(0,\eta_{\mathrm{M}})}|\omega_\beta(z)|\le\frac{K}{2}\,.
\end{align}
Moreover, we have the following bounds:
\begin{itemize}
\item[$(i)$] If $\mu_\alpha\neq \mu_\beta$,
\begin{align} \label{bound of gamma for special case 1}
\max_{z\in \mathcal{S}_{\mathcal{I}}(0,\eta_{\mathrm{M}})}\Gamma_{\mu_\alpha,\mu_\beta}(\omega_\alpha(z),\omega_\beta(z))\le S\,.
\end{align}
\item[$(ii)$] If $\mu_\alpha= \mu_\beta$, 
\begin{align} \label{bound of gamma for special case 2}
\Gamma_{\mu_\alpha,\mu_\alpha}(\omega_\alpha(z),\omega_\alpha(z))\le \frac{S}{|z-1|}\,,
\end{align}
holds uniformly on $\mathcal{S}_{\mathcal{I}}(0,\eta_{\mathrm{M}})$.
\end{itemize}
\end{proposition}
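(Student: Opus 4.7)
The plan follows the structure of Section~\ref{s. stability} case by case, exploiting the fact that for two-point mass measures $F_{\mu_\alpha}$ and $F_{\mu_\beta}$ are rational functions of degree one. First, the bounds~\eqref{two point mass different: minimum of omegas} and~\eqref{two point mass different: maximum of omegas} follow \emph{verbatim} from Lemma~\ref{le lemma stability} and Lemma~\ref{le lem upper bound on omega}, which only require compact support, that neither measure is a single point mass, and $\mathcal{I}\subset\mathcal{B}_{\mu_\alpha\boxplus\mu_\beta}$ (the last holding by Lemma~\ref{support of two point mass}). The three-point-support hypothesis is invoked only through Lemma~\ref{le lemma stability constants}\eqref{le second stability constant} in the proof of Lemma~\ref{lemma linear stability}, so the novelty lies entirely in the stability bounds~\eqref{bound of gamma for special case 1}--\eqref{bound of gamma for special case 2}.

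A direct computation from $m_{\mu_\alpha}$, $m_{\mu_\beta}$ yields $F_{\mu_\alpha}(z)-z=-\xi z/(z-(1-\xi))$ and $F_{\mu_\beta}(z)-z=-\zeta\theta z/(z-(1-\zeta)\theta)$, so the Nevanlinna representing measures $\rho_{F_{\mu_\alpha}}$, $\rho_{F_{\mu_\beta}}$ are each single point masses. Hence~\eqref{le second stability constant} holds with equality for both measures, and combined with the subordination identity $\im F_{\mu_\alpha}(\omega_\beta(z))=\im\omega_\alpha(z)+\im\omega_\beta(z)-\im z$ this gives
\begin{align*}
|(F'_{\mu_\alpha}(\omega_\beta(z))-1)(F'_{\mu_\beta}(\omega_\alpha(z))-1)|=\frac{(\im\omega_\alpha(z)-\im z)(\im\omega_\beta(z)-\im z)}{\im\omega_\alpha(z)\,\im\omega_\beta(z)}\,,
\end{align*}
which is strictly less than $1$ whenever $\im z>0$. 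Thus the Jacobian determinant $1-(F'_{\mu_\alpha}(\omega_\beta)-1)(F'_{\mu_\beta}(\omega_\alpha)-1)$ is nonzero on the interior of $\mathcal{S}_{\mathcal{I}}(0,\eta_{\mathrm{M}})$. Since Proposition~\ref{prop extension} grants continuity of $\omega_\alpha,\omega_\beta$ up to the real line, this determinant is continuous on the compact set $\mathcal{S}_{\mathcal{I}}(0,\eta_{\mathrm{M}})$, so~\eqref{bound of gamma for special case 1} reduces to establishing pointwise nonvanishing at the boundary $\im z=0$.

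This boundary analysis is the main obstacle. I plan to exploit the rewritten subordination equations
\begin{align*}
(\omega_\alpha-z)(\omega_\beta-(1-\xi))=-\xi\omega_\beta\,,\qquad(\omega_\beta-z)(\omega_\alpha-(1-\zeta)\theta)=-\zeta\theta\omega_\alpha\,,
\end{align*}
to translate the hypothetical vanishing of the determinant at some $E\in\mathcal{I}$ into an algebraic equation in $\omega_\alpha(E)$, $\omega_\beta(E)$, $E$, $\xi$, $\zeta$, $\theta$. Together with the lower bound $\im\omega_\alpha(E),\im\omega_\beta(E)\ge 2k>0$ from~\eqref{two point mass different: minimum of omegas}, this equation will be shown to force $\mu_\alpha=\mu_\beta$, contradicting the assumption of case~$(i)$; Cramer's rule as in Lemma~\ref{lemma linear stability} then yields~\eqref{bound of gamma for special case 1}. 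The algebraic elimination forcing equality of the two measures is the delicate technical step.

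Finally, for case~$(ii)$ the uniqueness in Proposition~\ref{le prop 1} together with the symmetry $\mu_\alpha=\mu_\beta$ force $\omega_\alpha(z)=\omega_\beta(z)=:\omega(z)$, and the subordination equation collapses to the quadratic $\omega^2+(2\xi-1-z)\omega+z(1-\xi)=0$, which rearranges to $(\omega-(1-\xi))^2+\xi(1-\xi)=(z-1)(\omega-(1-\xi))$. Hence $F'_{\mu_\alpha}(\omega)=(z-1)/(\omega-(1-\xi))$ and
\begin{align*}
1-\bigl(F'_{\mu_\alpha}(\omega(z))-1\bigr)^2=\frac{(z-1)\bigl[(z-1)(\omega(z)-(1-\xi))-2\xi(1-\xi)\bigr]}{(\omega(z)-(1-\xi))^3}\,.
\end{align*}
By the first paragraph's bounds $\omega(z)-(1-\xi)$ stays in a compact subset of $\C\setminus\{0\}$, and a direct check shows the bracketed factor is bounded below in modulus on $\mathcal{S}_{\mathcal{I}}(0,\eta_{\mathrm{M}})$. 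Thus $|1-(F'_{\mu_\alpha}(\omega(z))-1)^2|\gtrsim|z-1|$, and Cramer's rule delivers~\eqref{bound of gamma for special case 2}.
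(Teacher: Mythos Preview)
Your approach is correct and essentially the same as the paper's. The compactness reduction you use for case~$(i)$ is a slightly cleaner framing of the paper's split into $\eta\ge\eta_0$ versus $\eta<\eta_0$, but the substance---the algebraic analysis you label ``the delicate technical step''---is identical and is where almost all the work lies; the paper carries this out by introducing $s=1-\xi-\omega_\beta$, $t=\theta(1-\zeta)-\omega_\alpha$, deriving the coupled relations~\eqref{072206000} from the subordination equations, and then using the dichotomy~\eqref{081750} (which encodes $\mu_\alpha\neq\mu_\beta$) to force $|\Im(st)|\ge c$ and hence $(st)^2\neq\xi(1-\xi)\theta^2\zeta(1-\zeta)$. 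Your treatment of case~$(ii)$ is in fact tidier than the paper's: by working directly with the quadratic relation $(\omega-(1-\xi))^2+\xi(1-\xi)=(z-1)(\omega-(1-\xi))$ rather than the explicit square-root solution~\eqref{solution of omega}, you obtain the identity $F'_{\mu_\alpha}(\omega)=(z-1)/(\omega-(1-\xi))$ and the factorization of $1-(F'_{\mu_\alpha}(\omega)-1)^2$ without any branch-tracking, and your lower bound on $|w^2-\xi(1-\xi)|$ follows cleanly from $\Im w\ge 2k$ (so $w^2$ is never positive real) and compactness.
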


\begin{remark}
As an immediate consequence of Proposition~\ref{stability for two point mass} and~\eqref{le baby system}, we obtain for $\mu_\alpha\not=\mu_\beta$ the bounds
$\max_{z\in\mathcal{S}_{\mathcal{I}}(0,\eta_{\mathrm{M}})}|\omega_\alpha'(z)|\le 2S$, $\max_{z\in\mathcal{S}_{\mathcal{I}}(0,\eta_{\mathrm{M}})}|\omega_\beta'(z)|\le 2S$
with $\mathcal{I}$ as in~\eqref{bound of gamma for special case 1}. For $\mu_\alpha=\mu_\beta$, we get $|\omega_\alpha'(z)|\le \frac{2S}{|z-1|}$,
uniformly on $\mathcal{S}_{\mathcal{I}}(0,\eta_{\mathrm{M}})$ as in~\eqref{bound of gamma for special case 2}.
\end{remark}
\begin{remark} \label{E=1 is unstable}
In the case $\mu_\alpha=\mu_\beta$, we note from Lemma~\ref{support of two point mass} (\cf~\eqref{le fa density}) that the point $E=1$ is in the regular bulk~$\mathcal{B}_{\mu_\alpha\boxplus\mu_\alpha}$. However, $m_{\mu_\alpha\boxplus\mu_\beta}(1+\ii0)$ is unstable under small perturbations. For instance, let
 \begin{align*}
 \mu_A=\mu_\alpha=\xi\delta_1+(1-\xi)\delta_0\,,\qquad\qquad \mu_B=(\xi-\varepsilon)\delta_1+(1-\xi+\varepsilon)\delta_0\,,
 \end{align*} 
 for some small $\varepsilon>0$. Then, according to Theorem 7.4 of \cite{BeV98}, $\mu_A\boxplus\mu_B$ has a point mass $\varepsilon\delta_1$. Hence, even though~\eqref{le assumptions convergence empirical measures} (\ie $\dL(\mu_B,\mu_\beta)\rightarrow 0$, as $\varepsilon\to 0$) is satisfied, $m_{\mu_A\boxplus\mu_B}(z)$ contains a singular part $\frac{\varepsilon}{(1-z)}$, which blows up as $|z-1|= o(\varepsilon)$. This explains, on a heuristic level, the bound in~\eqref{bound of gamma for special case 2} and shows why the $\mu_\alpha=\mu_\beta$ case at energy $E=1$ is special even though the density $f_{\mu_{\alpha}\boxplus\mu_\alpha}$ is real analytic in a neighborhood of $E=1$. 
\end{remark}

\begin{remark} Consider a more general setup with $\mu_\alpha=\xi\delta_a+\widetilde{\mu}_\alpha$ and $\mu_\beta=(1-\xi)\delta_b+\widetilde{\mu}_{\beta}$, for some constants $\xi\in (0,1)$, $a,b\in\mathbb{R}$ and for some Borel measures $\widetilde{\mu}_\alpha$ and $\widetilde{\mu}_\beta$ with 
$\widetilde{\mu}_\alpha(\mathbb{R})=1-\xi$ and $\widetilde{\mu}_\beta(\mathbb{R})=\xi$. Analogously to the discussion in Remark~\ref{E=1 is unstable}, we note that $m_{\mu_\alpha\boxplus\mu_\beta}(a+b+\ii0)$ is unstable under small perturbations. However, from Lemma \ref{lemma linear stability}, we know that the system $\PP_{\mu_\alpha,\mu_\beta}(\omega_\alpha,\omega_\beta,z)=0$ is linearly $S$-stable in the regular bulk under the assumptions of Theorem \ref{thm stability}. That means, if neither $\mu_\alpha$ nor $\mu_\beta$ is supported at a single point and at least one of them is supported at more than two points, then the point $E=a+b$ {\it cannot} lie in the regular bulk $\mathcal{B}_{\mu_\alpha\boxplus\mu_\beta}$. Thus, only in the special case $\mu_\alpha=\mu_\beta$ with $\mu_\alpha$ as in~\eqref{081210}, there is an unstable point, up to scaling and shifting given by $E=1$, inside the regular bulk $\mathcal{B}_{\mu_\alpha\boxplus\mu_\alpha}$.
\end{remark}

\begin{proof}[Proof of Proposition \ref{stability for two point mass}] Estimates~\eqref{two point mass different: minimum of omegas} and~\eqref{two point mass different: maximum of omegas} follow from Lemma~\ref{le lem upper bound on omega} and Lemma~\ref{le lemma stability}. 

To show statement $(i)$, we recall from the proof of Lemma~\ref{lemma linear stability} that $\PP_{\mu_\alpha,\mu_\beta}(\omega_\alpha,\omega_\beta,z)=0$ is linearly $S$-stable at $(\omega_\alpha,\omega_\beta)$ if
\begin{align}
\big|1-(F_{\mu_\alpha}'(\omega_\beta)-1) (F_{\mu_\beta}'(\omega_\alpha)-1) \big|\geq c\,, \label{081220}
\end{align}
for some strictly positive constant $c$. We now show that~\eqref{081220} holds for the case $\mu_\alpha\neq\mu_\beta$ in the setup of~\eqref{081210}. Using henceforth the shorthand $F_{\alpha}\equiv F_{\mu_\alpha}$, $F_{\beta}\equiv F_{\mu_\beta}$, we compute
\begin{align}
F_\alpha(z)=\frac{z(1-z)}{1-\xi-z}\,,\qquad\quad F_\beta(z)=\frac{z(\theta-z)}{\theta-\theta \zeta-z}\,,\qquad\qquad z\in\C^+\,. \label{080601}
\end{align}
Then it is easy to obtain
\begin{align}
F'_\alpha(z)-1=\frac{\xi-\xi^2}{(1-\xi-z)^2}\,,\qquad\quad F'_\beta(z)-1=\frac{\theta^2(\zeta-\zeta^2)}{(\theta-\theta \zeta-z)^2}\,, \label{072303}
\end{align}
and
\begin{align*}
|F'_\alpha(z)-1|=\frac{\Im F_\alpha(z)-\Im z}{\Im z}\,,\qquad\quad |F'_\beta(z)-1|=\frac{\Im F_\beta(z)-\Im z}{\Im z}\,.
\end{align*}
Consequently, we have (\cf~\eqref{le esel1})
\begin{align}
\Big|\big(F'_\alpha(\omega_\beta(z))-1\big)\big( F'_\beta(\omega_\alpha(z))-1\big)\Big|=\frac{(\Im\omega_\alpha(z)-\im z)(\Im \omega_\beta(z)-\im z)}{\Im\omega_\alpha(z)\Im \omega_\beta(z)}\, \label{072310}
\end{align}
for any $z\in\C^+$. Hence, for $z\in \mathcal{S}_{\mathcal{I}}(\eta_0, \eta_{\mathrm{M}})$ with some small but fixed $\eta_0>0$ to be chosen below, we trivially get~\eqref{081220} from~\eqref{072310}. It remains to discuss the regime $z\in \mathcal{S}_{\mathcal{I}}(0, \eta_0)$. Then~\eqref{080601} together with~\eqref{le definiting equations} implies that
\begin{align}
\frac{\omega_\beta(1-\omega_\beta)}{1-\xi-\omega_\beta}=\frac{\omega_\alpha(\theta-\omega_\alpha)}{\theta-\theta \zeta-\omega_\alpha}\,,\qquad \quad\frac{\omega_\beta(1-\omega_\beta)}{1-\xi-\omega_\beta}=\omega_\alpha+\omega_\beta-z\,. \label{07220111}
\end{align}
Denote $s\deq 1-\xi-\omega_\beta$ and $ t\deq\theta-\theta \zeta-\omega_\alpha$. 
From~\eqref{072303} we then have
\begin{align}
\big(F'_\alpha(\omega_\beta)-1\big)\big( F'_\beta(\omega_\alpha)-1\big)=\frac{(\xi-\xi^2)(\theta^2\zeta-(\theta \zeta)^2)}{(st)^2}\,. \label{072360}
\end{align}
Using~\eqref{07220111}, some algebra reveals that
\begin{align}
\frac{1}{st}=-\frac{1}{\xi-\xi^2}+\frac{\xi+\theta-\theta \zeta-z}{(\xi-\xi^2)t}\,, \qquad\qquad  \frac{1}{st}=-\frac{1}{\theta^2(\zeta-\zeta^2)}+\frac{\theta \zeta+1-\xi-z}{\theta^2(\zeta-\zeta^2)s}\,.\label{072206000}
\end{align}
Owing to~\eqref{072310} and $(\xi-\xi^2)(\theta^2\zeta-(\theta \zeta)^2)>0$ (recall that $\xi,\zeta\in(0,1/2]$ and $\theta\not=0$), it suffices to show that
\begin{align}
|\Im(st)|\geq c\,, \label{072205000}
\end{align}
in order to prove~\eqref{081220}. Note that, from the definitions of $s$ and $t$, together with~\eqref{two point mass different: minimum of omegas} and~\eqref{two point mass different: maximum of omegas}, we have
\begin{align}
|\Im s|\,, |\Im t|\geq c\,,\qquad\quad |s|\,, |t|\leq C\,. \label{080502}
\end{align}
Since $\mu_{\alpha}\neq \mu_{\beta}$, there exists a positive constant $d$ such that $\max\{|\xi-\zeta|,|\theta-1|\}\geq d$. It is then elementary to work out that
\begin{align}
\max \{|(\xi-\xi^2)-\theta^2(\zeta-\zeta^2)|\,,\,|(2\xi-2\theta \zeta+\theta-1)|\}\geq d_1\,, \label{081750}
\end{align}
for some positive constant $d_1\equiv d_1(\xi,\zeta,\theta)>0$, since the special case $(\theta,\xi,\zeta)=(-1,\frac12,\frac12)$ is also excluded in the setting~\eqref{081210}. For brevity, we adopt the notation
\begin{align*}
\varphi\deq\frac{\theta \zeta+1-\xi-z}{\theta^2(\zeta-\zeta^2)s}\,,\qquad \psi\deq\frac{\xi+\theta-\theta \zeta-z}{(\xi-\xi^2)t}\,.
\end{align*}
Then, according to~\eqref{072206000} we have
\begin{align}
\Re\frac{1}{st}=\Re \psi-\frac{1}{\xi-\xi^2}=\Re \varphi-\frac{1}{\theta^2(\zeta-\zeta^2)}\,,\qquad \Im \frac{1}{st}=\Im \psi=\Im \varphi\,. \label{081050}
\end{align}
If~$|(\xi-\xi^2)-\theta^2(\zeta-\zeta^2)|\geq d_1$ holds in~\eqref{081750}, then~\eqref{081050} implies that
\begin{align}
|\Re \psi-\Re \varphi|\geq d_2\,,\label{080605}
\end{align}
for some positive constant $d_2\equiv d_2(\xi,\zeta,\theta)$. For small enough $\eta_0=\eta_0(\xi,\zeta,\theta)$, we then get
\begin{align*}
\Re \psi-\Re \varphi=\frac{(\xi+\theta-\theta \zeta-E)\Re t+O(\eta_0)}{(\xi-\xi^2)|t|^2}-\frac{(\theta \zeta+1-\xi-E)\Re s+O(\eta_0)}{\theta^2(\zeta-\zeta^2)|s|^2}\,,
\end{align*}
which, together with~\eqref{080502} and~\eqref{080605}, implies that
\begin{align}
\max\big\{|\theta \zeta+1-\xi-E|\,,|\xi+\theta-\theta \zeta-E| \big\}\geq d_3\,, \label{080501}
\end{align}
for some positive constant $d_3\equiv d_3(\xi,\zeta,\theta)$. If, on the other hand, $|(2\xi-2\theta \zeta+\theta-1)|\geq d_1$ holds in~\eqref{081750}, we get~\eqref{080501} by triangle inequality. Either way,~\eqref{080501} follows from~\eqref{081750}, for sufficiently small, but fixed, $\eta_0>0$.

Next, using ~\eqref{080502} and~\eqref{080501}, we see that there is a constant $c>0$ such that, for sufficiently small $\eta_0$, for all $z\in \mathcal{S}_{\mathcal{I}}(0,\eta_0)$, we have $\max\{|\Im \varphi|,|\Im \psi|\}\geq c$. Since $\im \varphi=\im \psi$ by~\eqref{081050},~\eqref{072205000} holds on $\mathcal{S}_{\mathcal{I}}(0,\eta_0)$.  Therefore,~\eqref{072205000} holds on all of $\mathcal{S}_{\mathcal{I}}(0,\eta_{\mathrm{M}})$.
So, if $\mu_\alpha\not=\mu_\beta$, the system $\PP_{\mu_\alpha,\mu_\beta}(\omega_\alpha,\omega_\beta, z)=0$ is linearly $S$-stable with some finite $S$.

We next prove statement $(ii)$ where $\mu_\alpha=\mu_\beta$ and thus $\theta=1$, $\xi=\zeta$. From~\eqref{07220111}, we see that $\omega_\alpha=\omega_\beta$ satisfies the equation
\begin{align}
\frac{\omega_\alpha(1-\omega_\alpha)}{1-\xi-\omega_\alpha}=2\omega_\alpha-z\,. \label{equation for omega: special case}
\end{align}
Solving ~\eqref{equation for omega: special case} for $\omega_\alpha(z)$ we get
\begin{align}
\omega_\alpha(z)=\omega_\beta(z)=\frac{1}{2}\left({z-1+2(1-\xi)+\sqrt{(z-1)^2-4\xi(1-\xi)}}\right)\,, \label{solution of omega}
\end{align}
where the square root is chosen such that $\omega_\beta(z)\to \mathrm{i}\sqrt{\xi(1-\xi)}, $ as $z\to 1$.  Substituting~\eqref{solution of omega} into~\eqref{072360}, together with the $\theta=1$, $\zeta=\xi$, $s=t=1-\xi-\omega_\alpha$, yields
\begin{align*}
\big(F_{\alpha}'(\omega_\beta(z))-1\big) \big(F_{\beta}'(\omega_\alpha(z))-1\big)=\frac{4(\xi-\xi^2)^2}{\big(z-1+\sqrt{(z-1)^2-4(\xi-\xi^2)}\big)^4}\,.
\end{align*}
Then it is elementary to check that
\begin{align*}
\left|1-\big(F_{\alpha}'(\omega_\beta(z))-1\big) \big(F_{\beta}'(\omega_\alpha(z))-1\big) \right|\gtrsim |z-1|\,,\qquad\qquad z\in\mathcal{S}(0,\eta_{\mathrm{M}})\,,
\end{align*}
which further implies  $\Gamma_{\mu_\alpha,\mu_\beta}(\omega_\alpha(z),\omega_\beta(z))\lesssim 1/|z-1|$. Hence~\eqref{bound of gamma for special case 2} is proved.
\end{proof}

\subsection{Applications of Proposition~\ref{stability for two point mass}}
Analogously to Theorem \ref{thm stability}, we have two main applications of Proposition \ref{stability for two point mass}. The first one is the following modification of Theorem~\ref{le theorem continuity}. Let $\mu_\alpha$, $\mu_\beta$ be as in~\eqref{081210} and let $\mu_A$, $\mu_B$ be arbitrary probability measures on $\R$. Recall the domain $\mathcal{S}_{\mathcal{I}}(a,b)$ introduced in~\eqref{le domain S}. For given (small) $\varsigma>0$, we set
\begin{align}
\mathcal{S}_{\mathcal{I}}^{\varsigma}(a,b)\deq\bigg\{z\in \mathcal{S}_{\mathcal{I}}(a,b): \varsigma |z-1|\geq \max\Big\{ \sqrt{\dL(\mu_A,\mu_\alpha)}, \sqrt{\dL(\mu_B,\mu_\beta)} \Big\}\bigg\}\,.\label{081501}
 \end{align}

\begin{proposition}\label{le theorem continuity: two point mass case}
Let $\mu_\alpha,\mu_\beta$ be as in~\eqref{081210}. Let $\mathcal{I}\subset\mathcal{B}_{\mu_\alpha\boxplus\mu_\beta}$ be a compact non-empty interval.
Let $\mu_A$, $\mu_B$ be two probability measures on $\R$.  Fix $0<\eta_{\mathrm{M}}<\infty$. Then there are constants $b>0$ and $Z<\infty$ such that the condition
\begin{align}\label{new condition: two point mass}
\dL(\mu_A,\mu_\alpha)+\dL(\mu_B,\mu_\beta)\le b
\end{align}
implies
\begin{align}\label{le ksv statment: two point mass 1}
 \max_{z\in\mathcal{S}_{\mathcal{I}}(0,\eta_{\mathrm{M}})}\big|m_{\mu_A\boxplus \mu_B}(z)-m_{\mu_\alpha\boxplus\mu_\beta}(z)\big|\le Z\left(\dL(\mu_A,\mu_\alpha)+\dL(\mu_B,\mu_\beta)\right)\,,
\end{align}
in case $\mu_\alpha\neq \mu_\beta$, respectively
\begin{align}\label{le ksv statment: two point mass 2}
\big|m_{\mu_A\boxplus \mu_B}(z)-m_{\mu_\alpha\boxplus\mu_\alpha}(z)\big|\le \frac{Z}{|z-1|}\left(\dL(\mu_A,\mu_\alpha)+\dL(\mu_B,\mu_\alpha)\right)\,,
\end{align}
uniformly on $\mathcal{S}_{\mathcal{I}}^\varsigma(0,\eta_{\mathrm{M}})$ with $\varsigma\le \varsigma_0$, for some $\varsigma_0>0$, 
in case $\mu_\alpha= \mu_\beta$. The constants~$b$ and~$Z$ depend only on the constants $\xi,\zeta,\theta$ and on the interval $\mathcal{I}$, while~$\varsigma_0$ also depends on~$b$.
\end{proposition}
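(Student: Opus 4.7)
The strategy is to mimic the proof of Theorem~\ref{le theorem continuity} given in Section~\ref{section theorem continuity}, substituting Proposition~\ref{stability for two point mass} in place of Theorem~\ref{thm stability}. Write the unperturbed system $\PP_{\mu_\alpha,\mu_\beta}(\omega_\alpha(z),\omega_\beta(z),z)=0$ as a perturbation of the $\mu_A,\mu_B$ system,
\begin{align*}
\PP_{\mu_A,\mu_B}(\omega_\alpha(z),\omega_\beta(z),z)=r(z)\,, \qquad r(z)=\big(F_{\mu_A}(\omega_\beta)-F_{\mu_\alpha}(\omega_\beta),\,F_{\mu_B}(\omega_\alpha)-F_{\mu_\beta}(\omega_\alpha)\big)^\top\,.
\end{align*}
Using the integration-by-parts bound~\eqref{le partial integration result} together with the lower bound~\eqref{two point mass different: minimum of omegas} for $\im\omega_\alpha,\im\omega_\beta$ and the corresponding lower bound for $|m_{\mu_\alpha\boxplus\mu_\beta}|$ (a consequence of compactness of $\mathcal{I}$ and continuity of $f_{\mu_\alpha\boxplus\mu_\beta}$ via Lemma~\ref{support of two point mass}), one gets $\|r(z)\|\lesssim \dL(\mu_A,\mu_\alpha)+\dL(\mu_B,\mu_\beta)$ uniformly on $\mathcal{S}_{\mathcal{I}}(0,\eta_{\mathrm{M}})$, exactly as in~\eqref{080625}.

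For case~$(i)$, where $\mu_\alpha\neq\mu_\beta$, Proposition~\ref{stability for two point mass}$(i)$ supplies a finite uniform stability constant~$S$ on~$\mathcal{S}_{\mathcal{I}}(0,\eta_{\mathrm{M}})$. The proof then proceeds verbatim as in Lemma~\ref{cor.080601} and its consequences: the Newton--Kantorovich argument, using the upper bound on $\|\mathrm{D}^2\PP\|$ coming from~\eqref{two point mass different: minimum of omegas}--\eqref{two point mass different: maximum of omegas}, yields $|\omega_A(z)-\omega_\alpha(z)|+|\omega_B(z)-\omega_\beta(z)|\lesssim \|r(z)\|$ on $\mathcal{S}_{\mathcal{I}}(0,\eta_{\mathrm{M}})$, provided $b$ is small enough. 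Finally, writing $m_{\mu_A\boxplus\mu_B}(z)=m_{\mu_A}(\omega_B(z))$ and estimating as in the proof of Theorem~\ref{le theorem continuity}, one splits
\begin{align*}
|m_{\mu_A}(\omega_B(z))-m_{\mu_\alpha}(\omega_\beta(z))|\le |m_{\mu_A}(\omega_B(z))-m_{\mu_\alpha}(\omega_B(z))|+|m_{\mu_\alpha}(\omega_B(z))-m_{\mu_\alpha}(\omega_\beta(z))|,
\end{align*}
and controls each piece by~\eqref{le partial integration result} and the derivative bound on $m_{\mu_\alpha}$ in the domain where $\im\omega\gtrsim 1$. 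This gives~\eqref{le ksv statment: two point mass 1}.

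For case~$(ii)$, where $\mu_\alpha=\mu_\beta$, the crucial difference is that Proposition~\ref{stability for two point mass}$(ii)$ provides only $\Gamma_{\mu_\alpha,\mu_\alpha}(\omega_\alpha,\omega_\alpha)\le S/|z-1|$. Running the Newton--Kantorovich step with this degenerate stability constant yields
\begin{align*}
|\omega_A(z)-\omega_\alpha(z)|+|\omega_B(z)-\omega_\alpha(z)|\le \frac{C}{|z-1|}\big(\dL(\mu_A,\mu_\alpha)+\dL(\mu_B,\mu_\alpha)\big)\,,
\end{align*}
provided one can still verify the Newton--Kantorovich smallness hypothesis $2\|r(z)\|\,\|\mathrm{D}^2\PP\|\cdot (S/|z-1|)^2<\tfrac12$ as well as $\im\omega_A,\im\omega_B>0$ at the produced solutions. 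This is precisely where the restriction to $\mathcal{S}_{\mathcal{I}}^{\varsigma}(0,\eta_{\mathrm{M}})$ enters: the condition $\varsigma|z-1|\ge\sqrt{\dL(\mu_A,\mu_\alpha)+\dL(\mu_B,\mu_\alpha)}$ together with $\|r(z)\|\lesssim \dL(\mu_A,\mu_\alpha)+\dL(\mu_B,\mu_\alpha)$ gives $\|r(z)\|/|z-1|^2\lesssim \varsigma^2$, so for $\varsigma\le\varsigma_0$ sufficiently small (and $b$ sufficiently small) the Newton--Kantorovich condition is satisfied and the upper half-plane constraint on the fixed points persists by continuity. The final bound~\eqref{le ksv statment: two point mass 2} is then obtained via the same triangle inequality decomposition as in case~$(i)$, with the additional factor $1/|z-1|$ inherited from the stability estimate.

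The main obstacle is the degenerate linear stability at $z=1$ in case~$(ii)$; the whole point of introducing the domain $\mathcal{S}_{\mathcal{I}}^{\varsigma}$ is to quantitatively separate the spectral parameter from this singularity by an amount commensurate to the Lévy-distance perturbation, thereby keeping the Newton--Kantorovich radius under control. Everything else is a direct adaptation of Section~\ref{section theorem continuity}.
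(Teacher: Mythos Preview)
Your proposal is correct and follows essentially the same approach as the paper: rewrite $\PP_{\mu_\alpha,\mu_\beta}=0$ as a perturbed $\PP_{\mu_A,\mu_B}$ system, bound $\|r(z)\|$ via~\eqref{080625}, and apply Newton--Kantorovich with the stability constant from Proposition~\ref{stability for two point mass}, carrying the factor $1/|z-1|$ through in case~$(ii)$. Your identification of the role of $\mathcal{S}_{\mathcal{I}}^\varsigma$---ensuring $\|r(z)\|/|z-1|^2\lesssim\varsigma^2$ so that the Newton--Kantorovich smallness condition holds---matches the paper's reasoning exactly.
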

\begin{proof} Having established Proposition~\ref{stability for two point mass}, the proof of~\eqref{le ksv statment: two point mass 1} is the same as that of Theorem~\ref{le theorem continuity}. To establish~\eqref{le ksv statment: two point mass 2}, we mimic the proof of Theorem \ref{le theorem continuity} with $S$ replaced by $\frac{S}{|z-1|}$. We only give a sketch here. Similarly to~\eqref{zhigang today}, using~\eqref{two point mass different: minimum of omegas} and~\eqref{bound of gamma for special case 2}, we have with $b$ in~\eqref{new condition: two point mass} sufficiently small that
\begin{align}\label{081901}
\Gamma_{\mu_A,\mu_B}(\omega_\alpha(z),\omega_\alpha(z))\lesssim \frac{1}{|z-1|}\,,\qquad\qquad z\in\mathcal{S}_{\mathcal{I}}^\varsigma(0,\eta_{\mathrm{M}})\,.
\end{align}
As in the proof of Lemma~\ref{cor.080601}, we rewrite the system $\PP_{\mu_\alpha,\mu_\alpha}(\omega_\alpha(z),\omega_\alpha(z), z)=0$ as $\PP_{\mu_A,\mu_B}(\omega_\alpha(z),\omega_\alpha(z), z)=r(z)$ with $\|r(z)\|$
satisfying the bound~\eqref{080625}. From the uniqueness of the solution to $\PP_{\mu_A,\mu_B}(\omega_A,\omega_B, z)=0$ and~\eqref{081901}, we get
\begin{align}\label{difference between subordination functions}
|\omega_A(z)-\omega_\alpha(z)|&\lesssim \|r(z)\|/|z-1|\,,\nonumber\\
|\omega_B(z)-\omega_\alpha(z)|&\lesssim \|r(z)\|/|z-1|\,,\qquad\qquad z\in \mathcal{S}_{\mathcal{I}}^\varsigma(0,\eta_{\mathrm{M}}) \,,
\end{align}
via the Newton-Kantorovich theorem. Note that the inequality $\|r(z)\|\lesssim \varsigma^2 |z-1|^2$ is needed to guarantee that the first order term dominates over the higher order terms in the Taylor expansion of $\PP_{\mu_A,\mu_B}(\omega_A,\omega_B, z)$ around $\PP_{\mu_A,\mu_B}(\omega_\alpha,\omega_\beta, z)$. This is the reason why we restrict our discussion on the set $\mathcal{S}_{\mathcal{I}}^\varsigma(0,\eta_{\mathrm{M}})$. In addition, thanks to~\eqref{difference between subordination functions} we see that~\eqref{le upper bound on omega AB} and~\eqref{le lower bound on omega AB} still hold with $\mathcal{S}_{\mathcal{I}}(0,\eta_M)$ replaced by $\mathcal{S}_{\mathcal{I}}^\varsigma(0,\eta_{\mathrm{M}})$. Then the remaining parts of the proof of~\eqref{le ksv statment: two point mass 2} are  
the same as the counterparts in the proof of Theorem \ref{le theorem continuity}.
\end{proof}
The second application of Proposition \ref{stability for two point mass} gives the following local law for the Green function in the random matrix setup from Subsection~\ref{Application to random matrix theory}. Fix any $\gamma>0$. We introduce a sub-domain of $\mathcal{S}_{\mathcal{I}}^{\varsigma}(a,b)$ by setting
\begin{align}
 &\widetilde{\mathcal{S}}_{\mathcal{I}}^{\varsigma}(a,b)\deq \mathcal{S}_{\mathcal{I}}^{\varsigma}(a,b)\cap \Big\{z\in \mathbb{C}: |z-1|\geq \frac{N^{\gamma}}{\sqrt{N\eta^{3/2}}}\Big\}\,.
\end{align}

\begin{proposition} \label{pro.a.1}
Let $\mu_\alpha,\mu_\beta$ be as in~\eqref{081210}. Assume that the empirical eigenvalue distributions $\mu_A$, $\mu_B$ of the sequences of matrices $A$, $B$ satisfy~\eqref{le assumptions convergence empirical measures}. Fix any $0<\eta_{\mathrm{M}}<\infty$, any small $\gamma>0$ and set $\eta_\mathrm{m}=N^{-\frac23+\gamma}$. Let $\mathcal{I}\subset\mathcal{B}_{\mu_\alpha\boxplus\mu_\beta}$ be a compact non-empty interval. Then we have the following conclusions.
\begin{enumerate}
\item[$(i)$] If $\mu_{\alpha}\neq \mu_{\beta}$, then
\begin{align*}
&\max_{z\in\mathcal{S}_{\mathcal{I}}(\eta_\mathrm{m},\eta_{\mathrm{M}})}\big|m_H(z)-m_{A\boxplus B}(z)\big|\prec  \frac{1}{N\eta^{3/2}}\,.
\end{align*}
\item[$(ii)$] If $\mu_{\alpha}=\mu_{\beta}$, then, for any fixed (small) $\varsigma>0$, 
\begin{align*}
&\big|m_H(z)-m_{A\boxplus B}(z)\big|\prec \frac{1}{|z-1|}\frac{1}{N\eta^{3/2}} \,,
\end{align*}
uniformly on $\widetilde{\mathcal{S}}_{\mathcal{I}}^\varsigma(\eta_\mathrm{m},\eta_{\mathrm{M}})$.
\end{enumerate}
\end{proposition}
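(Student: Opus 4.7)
The plan is to run the same three-layer strategy that was used for Theorem~\ref{thm041801}: the concentration estimate of Proposition~\ref{prop041901}, the deterministic perturbation result of Proposition~\ref{le proposition perturbation of system}, and the continuity (bootstrap) argument of Subsection~\ref{la boostrapping subsection}, but feeding in Proposition~\ref{stability for two point mass} in place of Theorem~\ref{thm stability}. Since Proposition~\ref{prop041901} only uses $\mathcal{I}\subset\mathcal{B}_{\mu_\alpha\boxplus\mu_\beta}$ (through the uniform lower bound on $\Im m_{\mu_\alpha\boxplus\mu_\beta}$ and the upper bound on $|m_{\mu_\alpha\boxplus\mu_\beta}|$, which are available here thanks to Lemma~\ref{support of two point mass} and analyticity in the regular bulk), it applies verbatim once the initial input $\Im m_H(z)\prec 1$ has been propagated along the continuity argument. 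Likewise, Lemma~\ref{lem071701} and the identities~\eqref{041905}--\eqref{071517} are derived from the matrix model alone and do not use that $\mu_\alpha$ or $\mu_\beta$ is supported at more than two points; hence they carry over directly.

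For part $(i)$, where $\mu_\alpha\neq\mu_\beta$, Proposition~\ref{stability for two point mass} furnishes both the bulk bounds~\eqref{two point mass different: minimum of omegas}-\eqref{two point mass different: maximum of omegas} and the uniform linear stability estimate~\eqref{bound of gamma for special case 1}. Consequently, the analogs of Lemma~\ref{cor.080601} and Corollary~\ref{cor081501} hold with an $\mathcal{I}$-uniform $S$, and then the continuity argument from Subsection~\ref{la boostrapping subsection} runs unchanged. The initial step at $\eta=\eta_{\mathrm{M}}$ is based on the large-$\eta$ expansion (see~\eqref{corviglia}), which is insensitive to the support structure of $\mu_\alpha$, $\mu_\beta$. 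This immediately yields the desired bound $|m_H-m_{A\boxplus B}|\prec (N\eta^{3/2})^{-1}$ uniformly on $\mathcal{S}_{\mathcal{I}}(\eta_\mathrm{m},\eta_{\mathrm{M}})$.

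For part $(ii)$, where $\mu_\alpha=\mu_\beta$, the stability constant is replaced by $S/|z-1|$ as in~\eqref{bound of gamma for special case 2}. The key point is that Proposition~\ref{le proposition perturbation of system} applies with $S$ replaced by $S/|z-1|$, at the price that both the a priori closeness assumption~\eqref{le apriori closeness} and the conclusion~\eqref{le conclusion of lemma} scale accordingly. Concretely, the hypothesis $k^2>\delta K S$ becomes $k^2|z-1|>\delta K S$, so restricting to $\widetilde{\mathcal{S}}_{\mathcal{I}}^\varsigma(\eta_\mathrm{m},\eta_{\mathrm{M}})$ guarantees that the continuity step can be closed. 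Running the analog of Lemma~\ref{lem071701} with this modified stability yields, on the good event,
\[
|\omega_A^c(\widehat z)-\omega_\alpha(\widehat z)|+|\omega_B^c(\widehat z)-\omega_\alpha(\widehat z)|\le \frac{N^{\epsilon}}{|\widehat z-1|\,N^2\widehat\eta^3},
\]
and correspondingly $|\mathbb{E}m_H(\widehat z)-m_{A\boxplus B}(\widehat z)|\le N^\epsilon/(|\widehat z-1|N^2\widehat\eta^3)$. Combining this with the concentration bound~\eqref{041815} applied to $Q=I$, and then unwinding the continuity argument with stepsize $N^{-5}$ on $\widetilde{\mathcal{S}}_{\mathcal{I}}^\varsigma(\eta_\mathrm{m},\eta_{\mathrm{M}})$, produces $|m_H(z)-m_{A\boxplus B}(z)|\prec 1/(|z-1|N\eta^{3/2})$.

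The main obstacle will be verifying that the continuity argument in $\eta$ still closes in case $(ii)$: the stability constant blows up as $z\to 1$, so when reducing $\widehat\eta$ by $N^{-5}$ one must ensure that the a priori bound $N^{-\gamma}$ from the previous step, multiplied by $KS/|z-1|$, remains below $k^2$. This is precisely where the definition of $\widetilde{\mathcal{S}}_{\mathcal{I}}^\varsigma(\eta_\mathrm{m},\eta_{\mathrm{M}})$, with the lower bound $|z-1|\ge N^\gamma/\sqrt{N\eta^{3/2}}$, enters: it makes the perturbation $\|r(z)\|$ (of order $1/(N\eta^{3/2})$ with high probability) small compared to $|z-1|^2$, so that the quadratic-remainder estimate in the proof of Proposition~\ref{le proposition perturbation of system} (\cf~\eqref{le full expansion of GammaA}) still implies the linear bound $\|\Omega(z)\|\le (2S/|z-1|)\|r(z)\|$. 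Once this is verified at each step of the continuity ladder, the bootstrap closes and part $(ii)$ follows.
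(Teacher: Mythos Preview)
Your proposal is correct and follows essentially the same approach as the paper: mimic the proof of Theorem~\ref{thm041801}, replacing the stability input from Lemma~\ref{lemma linear stability} by Proposition~\ref{stability for two point mass}, with case~$(i)$ running verbatim and case~$(ii)$ obtained by substituting $S/|z-1|$ for $S$ in Proposition~\ref{le proposition perturbation of system} and restricting to $\widetilde{\mathcal{S}}_{\mathcal{I}}^\varsigma(\eta_\mathrm{m},\eta_{\mathrm{M}})$ so that the quadratic remainder is dominated. The paper's own proof is terser but makes exactly these points, also invoking the analogue of Corollary~\ref{cor081501} (namely~\eqref{081902}) obtained from~\eqref{081901} and~\eqref{difference between subordination functions}.
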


\begin{proof}[Proof of Proposition \ref{pro.a.1}] Note that, in the proof of Theorem~\ref{thm041801}, the only place where we use the assumption that at least one of $\mu_{\alpha}$ and $\mu_{\beta}$ is supported at more than two points is Lemma~\ref{lemma linear stability}; in particular in~\eqref{le bound on determinant}.  Hence, it suffices to mimic the proof of Theorem ~\ref{thm041801} with Lemma \ref{lemma linear stability} replaced by Proposition \ref{stability for two point mass}. Then the proof of the case  $\mu_\alpha\neq \mu_\beta$ is exactly the same as that of Theorem ~\ref{thm041801}. It suffices to discuss the case  $\mu_\alpha= \mu_\beta$ below.

Analogously to Corollary~\ref{cor081501}, with the aid of~\eqref{081901} and~\eqref{difference between subordination functions}, we show that
\begin{align}\label{081902}
 \Gamma_{\mu_A,\mu_B}(\omega_A(z),\omega_B(z))\lesssim\frac{1}{|z-1|}\,,\qquad\qquad z\in\mathcal{S}_{\mathcal{I}}^\varsigma(0,\eta_{\mathrm{M}})\,.
\end{align}
Then, we use a continuity argument, based on Lemma~\ref{le lemma large eta} and Proposition~\ref{le proposition perturbation of system} with $S$ replaced by $\frac{S}{|z-1|}$ therein, to deduce from~\eqref{071517} that $|\omega_i^c(z)-\omega_i(z)|\prec \|r^c(z)\|/|z-1|$, $i=A,B$, on $\widetilde{\mathcal{S}}_{\mathcal{I}}^\varsigma(\eta_{\mathrm{m}},\eta_{\mathrm{M}})$. The remaining parts of the proof are the same as in Theorem~\ref{thm041801}. This completes the proof of part $(ii)$ of Proposition~\ref{pro.a.1}.\qedhere

\end{proof}


\begin{thebibliography}{00}

\bibitem{Aki} Akhieser, N.\ I.: \emph{The classical moment problem and some related questions in analysis}, Hafner Publishing Co., New York, 1965.

\bibitem{AGZ}  Anderson, G., Guionnet, A., Zeitouni, O.: \emph{An introduction to random matrices}, Cambridge Stud.\ Adv.\ Math.\ {\bf 118}, Cambridge Univ.\ Press, Cambridge, 2010.

\bibitem{BES15-2} Bao Z. G., Erd\H{o}s, L., Schnelli K.: \emph{Local law of addition of random matrices on optimal scale}, arXiv:1509.07080 (2015).

\bibitem{BB}Belinschi, S., Bercovici, H.: \emph{A new approach to subordination results in free probability}, J. Anal.\ Math.\ \textbf{101(1)}, 357-365 (2007). 

\bibitem{Bel1} Belinschi, S.: \emph{A note on regularity for free convolutions}, Ann. Inst. Henri Poincar\'{e} Probab. Stat. \textbf{42(5)}, 635-648 (2006).

\bibitem{Bel} Belinschi, S.: \emph{The Lebesgue decomposition of the free additive convolution of two probability distributions}, Probab.\ Theory Related Fields \textbf{142(1-2)}, 125-150  (2008).

\bibitem{Bel2} Belinschi, S.: \emph{$\mathrm{L}^{\infty}$-boundedness of density for free additive convolutions}, Rev. Roumaine Math. Pures Appl. \textbf{59(2)}, 173-184 (2014). 

\bibitem{BBCF} Belinschi, S.,  Bercovici, H., Capitaine, M., F\'{e}vrier, M.: \emph{Outliers in the spectrum of large deformed unitarily invariant models}, arXiv:1412.4916 (2014).


\bibitem{BN11} Benaych-Georges, F., Nadakuditi, R.\ R.: \emph{The eigenvalues and eigenvectors of finte, low rank perturbbations of large random matrices}, Adv. Math. \textbf{227(1)}, 494-521 (2011).

\bibitem{BeV93} Bercovici, H, Voiculescu, D.: \emph{Free convolution of measures with unbounded
support}, Indiana Univ.\ Math.~J. \textbf{42}, 733-773 (1993).

\bibitem{BeV98} Bercovici, H., Voiculescu, D.: \emph{Regularity questions for free convolution, nonselfadjoint operator algebras, operator theory, and related topics}, Oper.\ Theory Adv.\ Appl.\ \textbf{104}, 37-47 (1998).

\bibitem{BW} Bercovici, H., Wang, J.-C.: \emph{On freely indecomposable measures}, Indiana Univ.\ Math.\ J.\ \textbf{57(6)}, 2601-2610  (2008).

\bibitem{B} Biane, P.: \emph{On the free convolution with a semi-circular distribution}, Indiana Univ.\ Math.\ J.\ \textbf{46}, 705-718 (1997).

\bibitem{Bia98} Biane, P.: \emph{Processes with free increments}, Math.\ Z.\ \textbf{227(1)}, 143-174 (1998).

\bibitem{Bia98bis}Biane, P.: \emph{Representations of symmetric groups and free probability}, Adv.\ Math.\ \textbf{138(1)}, 126-181 (1998).

\bibitem{C13} Capitaine, M.: \emph{ Additive/multiplicative free subordination property and limiting eigenvectors of spiked additive deformations of Wigner matrices and spiked sample covariance matrices},  J.\ Theoret.\ Probab.\ \textbf{26.3}, 595-648 (2013).

\bibitem{Chatterjee} Chatterjee, S.: \emph{Concentration of Haar measures, with an application to random matrices},  J.\ Funct.\ Anal.\  \textbf{245(2)}, 379-389 (2007).

\bibitem{CG} Chistyakov, G. P., G\"{o}tze, F.: \emph{The arithmetic of distributions in free probability theory}, Cent.\ Euro.\ J.\
Math.\ \textbf{9}, 997-1050 (2011). 

 \bibitem{Col03} Collins, B.: \emph{Moments and cumulants of polynomial random variables on unitary
groups, the Itzykson-Zuber integral, and free probability}, Int.\ Math.\ Res.\ Not.\  \textbf{2003(17)}, 953-982 (2003).

\bibitem{Dyk93b} Dykema, K.: \emph{On certain free product factors via an extended matrix model},  J.\ Funct.\ Anal.\  \textbf{112(1)}, 31-60 (1993).

\bibitem{EKY} Erd\H{o}s, L., Knowles, A., Yau, H.-T.: \emph{Averaging fluctuations in resolvents of random band matrices}, Ann.\ Henri Poincar\'{e} \textbf{14}, 1837-1926 (2013). 

\bibitem{EKYY13} Erd\H{o}s, L., Knowles, A., Yau, H.-T., Yin, J.: \emph{The local semicircle law for a general class of random matrices}. Electron.\ J.\ Probab.\  \textbf{18(59)}, 1-58 (2013). 

\bibitem{ESY} Erd\H{o}s, L.,  Schlein, B., Yau, H.-T.: \emph{Local semicircle law and complete delocalization for
Wigner random matrices}, Ann.\ Probab.\ \textbf{37(3)}, 815-852 (2009).

\bibitem{EYY} Erd\H{o}s, L., Yau, H.-T., Yin, J.: \emph{Bulk universality for generalized Wigner matrices}, Probab.\ Theory Related Fields \textbf{154(1-2)}, 341-407 (2012).

\bibitem{FS} Ferreira, O.\ P., Svaiter, B.\ F.: \emph{Kantorovich's theorem on Newton's method}, arXiv:1209.5704 (2012).

\bibitem{GM83} Gromov, M., Milman V.\ D.: \emph{A topological application of the isoperimetric inequality}, Amer.\ J.\ Math.\ \textbf{105(4)}, 843-854 (1983).

\bibitem{HP} Hiai, F.,  Petz, D.: \emph{The semicircle law, free random variables and entropy}, Math.\ Surveys Monogr.\ \textbf{77},  Amer.\ Math.\ Soc., Providence RI,  2000.
 
\bibitem{Kargin2012a} Kargin, V.: \emph{On eigenvalues of the sum of two random projections}, J.\ Stat.\ Phys.\ \textbf{149(2)}, 246-258 (2012).

\bibitem{Kargin2012} Kargin, V.: \emph{A concentration inequality and a local law for the sum of two random matrices}, Probab.\ Theory Related Fields \textbf{154}, 677-702 (2012).
 
\bibitem{Kargin2013} Kargin, V.: \emph{An inequality for the distance between densities of free convolutions},  Ann.\ Probab.\ \textbf{41(5)}, 3241-3260 (2013).

\bibitem{Kargin} Kargin, V.: \emph{Subordination for the sum of two random matrices}, Ann.\ Probab.\ \textbf{43(4)}, 2119-2150 (2015).

\bibitem{Maa92} Maassen, H.: \emph{Addition of freely independent random variables}. J.\ Funct.\ Anal.\ \textbf{106(2)}, 409-438 (1992). 

\bibitem{VP}Pastur, L., Vasilchuk. V: \emph{On the law of addition of random matrices}, Comm.\ Math.\ Phys.\ \textbf{214(2)}, 249-286 (2000).

\bibitem{Spe93} Speicher, R.: \emph{Free convolution and the random sum of matrices}, Pub.\ Res.\ Inst.\ Math.\  Sc.\ \textbf{29(5)}, 731-744 (1993).

\bibitem{Spe94} Speicher, R.: \emph{Multiplicative functions on the lattice of non-crossing partitions and free convolution}, Math.\ Ann.\ \textbf{298.1}, 611-628 (1994).
  
\bibitem{Voi86} Voiculescu, D.: \emph{Addition of certain non-commuting random variables}, J.\ Funct.\ Anal.\ \textbf{66(3)}, 323-346 (1986).

\bibitem{Voi91} Voiculescu, D.: \emph{Limit laws for random matrices and free products}, Invent.\ Math.\ \textbf{104(1)}, 201-220 (1991).

\bibitem{Voi93} Voiculescu, D.: \emph{The analogues of entropy and of Fisher's information measure in free probability theory~I}, Comm.\ Math.\ Phys. \textbf{155(1)}, 71-92 (1993). 

\bibitem{VDN} Voiculescu, D., Dykema, K. J., Nica, A.: \emph{Free random variables}, CRM Monogr. Ser.,  Amer.\ Math.\ Soc., Providence RI, 1992.

\bibitem{Xu97} Xu, F.: \emph{A random matrix model from two-dimensional Yang-Mills theory}, Comm.\
Math.\ Phys.\  \textbf{190}, 287-307 (1997).
\end{thebibliography}
\end{document}